\newcommand{\calF}{\mathcal F}
\newcommand{\E}{{\rm E}}
\newcommand{\Sp}{{\rm Sp}}
\newcommand{\Hom}{{\rm Hom}}
\newcommand{\rk}{{\rm rk}}
\newcommand{\wt}{\widetilde}
\newcommand{\deH}{\partial\mathbb H _{\mathbb C}}
\newcommand{\<}{\langle}
\renewcommand{\>}{\rangle}
\newcommand{\stab}{\mathrm{stab}}
\newcommand{\Stab}{\mathrm{Stab}}
\newcommand{\GL}{{\rm GL}}
\newcommand{\SU}{{\rm SU}}
\newcommand{\U}{{\rm U}}
\newcommand{\SL}{{\rm SL}}
\newcommand{\SO}{{\rm SO}}
\newcommand{\PSL}{{\rm PSL}}
\newcommand{\PU}{{\rm PU}}
\newcommand{\Gr}{{\rm Gr}}
\newcommand{\Id}{{\rm Id}}
\newcommand{\ov}{\overline}
\renewcommand{\b}{\beta}
\newcommand{\g}{\gamma}
\renewcommand{\k}{\kappa}
\renewcommand{\o}{\omega}
\newcommand{\G}{\Gamma}
\newcommand{\Om}{\Omega}
\newcommand{\C}{\mathbb C}
\newcommand{\D}{\mathbb D}
\renewcommand{\H}{\mathbb H}
\newcommand{\R}{\mathbb R}
\newcommand{\Aa}{\mathcal A}
\newcommand{\Bb}{\mathcal B}
\newcommand{\Cc}{\mathcal C}
\newcommand{\Dd}{\mathcal D}
\newcommand{\Ff}{\mathcal F}
\newcommand{\Hh}{\mathcal H}
\newcommand{\Ll}{\mathcal L}
\newcommand{\Oo}{\mathcal O}
\newcommand{\Ss}{\mathcal S}
\newcommand{\Tt}{\mathcal T}
\newcommand{\Uu}{\mathcal U}
\newcommand{\Ww}{\mathcal W}
\newcommand{\Xx}{\mathcal X}
\newcommand{\Yy}{\mathcal Y}
\newcommand{\fru}{\mathfrak{u}}
\newcommand{\tr}{\text{tr}}
\newcommand{\tra}{\pitchfork}
\newcommand{\Isom}{\text{Isom}}
\numberwithin{equation}{section}
\numberwithin{equation}{section}
\newcommand{\bqn}{\begin{equation*}}
\newcommand{\eqn}{\end{equation*}}
\newcommand{\bq}{\begin{equation}}
\newcommand{\eq}{\end{equation}}
\newcommand{\ba}{\begin{aligned}}
\newcommand{\ea}{\end{aligned}}
\newcommand{\be}{\begin{enumerate}}
\newcommand{\ee}{\end{enumerate}}
\newcommand{\bsm}{\left[\begin{smallmatrix}}
\newcommand{\esm}{\end{smallmatrix}\right]}                   
\newcommand{\bpm}{\begin{bmatrix}}
\newcommand{\epm}{\end{bmatrix}}
\newcommand{\barr}{\begin{displaymath}\begin{array}{cccc}}
\newcommand{\earr}{\end{array}\end{displaymath}}
\newcommand{\barrl}{\begin{displaymath}\begin{array}{lcl}}
\newcommand{\earrl}{\end{array}\end{displaymath}}
\newcommand{\barl}{\begin{displaymath}\begin{array}{l}}
\newcommand{\earl}{\end{array}\end{displaymath}}
\newcommand{\bxym}{ \begin{displaymath}\xymatrix }
\newcommand{\exym}{\end{displaymath}}
\theoremstyle{plain}
\newtheorem{thm}{Theorem}[section]
\newtheorem{lem}[thm]{Lemma}
\newtheorem{prop}[thm]{Proposition}
\newtheorem{cor}[thm]{Corollary}
\newtheorem*{teo*}{Theorem}
\newtheorem{claim}[thm]{Claim}
\theoremstyle{definition}
\newtheorem{defn}[thm]{Definition}
\newcommand{\thismonth}{\ifcase\month 
  \or January\or February\or March\or April\or May\or June%
  \or July\or August\or September\or October\or November%
  \or December\fi}
\begin{document}
\title[Maximal representations into $\SU(m,n)$]{Maximal representations of complex hyperbolic lattices into $\SU(m,n)$}

\author{M. B. Pozzetti}
\address{Department Mathematik, ETH Z\"urich, 
R\"amistrasse 101, CH-8092 Z\"urich, Switzerland}
\email{beatrice.pozzetti@math.ethz.ch}\keywords{Complex hyperbolic space, Shilov boundary, maximal representation, tight embedding, tube-type subdomain}

\date{\today}
\thanks{I want to thank Marc Burger and Alessandra Iozzi for suggesting the topic of this article, for sharing with me their ideas and for many insightful conversation about the content of the paper. This work was partially supported by the Swiss National Science Foundation project 200020-144373. I am grateful for the hospitality of Princeton University and Universit\`a di Pisa where part of this work was completed.}

\begin{abstract}
Let $\G$ denote a lattice in $\SU(1,p)$, with $p$ greater than 1. We show that there exists no Zariski dense maximal representation with target $\SU(m,n)$ if $n>m>1$. The proof is geometric and is based on the study of the rigidity properties of the geometry whose points are isotropic $m$-subspaces of a complex vector space $V$ endowed with a Hermitian metric $h$ of signature $(m,n)$ and whose lines correspond to the $2m$ dimensional subspaces of $V$ on which the restriction of $h$ has signature $(m,m)$.  
\end{abstract}
\maketitle

\section{Introduction}\label{sec:Intro}
Let $\G$ be a finitely generated group and $G$ be a connected semisimple Lie group. It is an interesting problem to select and study some connected components of the representation variety $\Hom(\G,G)$ that consist of homomorphisms $\rho:\G\to G$ that are well behaved and, ideally, reflect some interesting geometric properties of the group $\G$. The best example of this framework is the case in which $\G$ is the fundamental group $\G_g$ of a closed surface of genus $g\geq 2$ and $G$ is $\PSL_2(\R)$. In this case the Teichm\"uller space arises as a component of $\Hom(\G_g,\PSL_2(\R))/\!/\PSL_2(\R)$ that can be selected by means of a cohomological invariant \cite{Goldmanthesis}.

In the more general setting in which $G$ is any Hermitian Lie group, the so-called \emph{maximal representations} form a well studied union of connected components  of the character variety  $\Hom(\G_g,G)/\!/G$ which generalize the Teichm\"uller component \cite{BGG,Toledo}. In analogy with holonomy representations of hyperbolizations, maximal representations can be characterized as those 
representations that maximize an invariant, the \emph{Toledo invariant}, that can be defined in terms of bounded cohomology. Such representations are discrete and faithful, and give rise to interesting geometric structures \cite{Toledo, Anosov}. Maximal representations in Hermitian Lie groups were first studied by Toledo in  \cite{Toledo-rep} where he proves that a maximal representation $\rho:\G_g\to \SU(1,q)$ fixes a complex geodesic, and by Hernandez \cite{Hernandez} who studied maximal representations $\rho:\G_g\to \SU(2,q)$ and showed that the image must stabilize a symmetric space associated to the group $\SU(2,2)$. In general any maximal representation stabilizes a tube-type subdomain \cite{Toledo}.
Despite this, a remarkable flexibility result holds for maximal representations $\rho$ of fundamental groups of surfaces: if the image of $\rho$ is a Hermitian Lie group of tube type, then $\rho$ admits a one parameter family of deformations consisting of Zariski dense representations \cite{Toledo, KP}.

An analogue of the Toledo invariant was defined by Burger and Iozzi in \cite{BIpreprint} for representations of a lattice $\G$ in $\SU(1,p)$ with values in a Hermitian Lie group $G$. This allows to select a union of connected components of $\Hom(\G,G)$ consisting of \emph{maximal representations}. These generalize maximal representations of fundamental groups of surfaces: the fundamental group of a surface is a lattice in $\PU(1,1)=\PSL_2(\R)$. However, if $p$ is greater than one, a different behavior is expected: Goldman and Milson  proved local rigidity for the standard embedding of $\G$ in $\SU(1,q)$ \cite{GM}, and Corlette 
proved that maximal representations of uniform complex hyperbolic lattices with values in $\SU(1,q)$ all come from the standard construction \cite{Corlette}. 
The picture for rank one targets was completed independently by Koziarz and Maubon \cite{KMrank1} and by Burger and Iozzi \cite{BICartan}: any maximal representation of a lattice in $\SU(1,p)$ with values in $\SU(1,q)$ admits an equivariant totally geodesic holomorphic embedding $\H^p_\C\to \H^q_\C$. 
Koziarz and Maubon generalized this result to the situation in which the target group is classical of rank 2 and the lattice is cocompact \cite{KM}\footnote{In his recent preprint Spinaci studies maximal representations of cocompact K\"ahler groups admitting an holomorphic equivariant map \cite{Spi}}. It is conjectured that every maximal representation of a complex hyperbolic lattice with target a Hermitian Lie group is superrigid, namely it extends, up to a representation of $\G$ in the compact centralizer of the image, to a representation of the ambient group $\SU(1,p)$. 

In this article we show that the conjecture holds for Zariski dense representations in $\SU(m,n)$, with $m$ different from $n$:
\begin{thm}\label{thm:Zariskisuperrigidity}
 Let $\G$ be a lattice in $\SU(1,p)$ with $p>1$. If $m$ is different from $n$, then every Zariski dense maximal representation of $\G$ into $\PU(m,n)$ is the restriction of a representation of $\SU(1,p)$.
\end{thm}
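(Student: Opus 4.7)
The plan is to combine the measurable boundary theory for maximal representations with a rigidity statement for the incidence geometry of the Shilov boundary described in the abstract. By the work of Burger--Iozzi \cite{BIpreprint}, a maximal representation $\rho:\G\to\PU(m,n)$ admits a $\rho$-equivariant measurable Furstenberg map $\phi:\partial\H^p_\C\to \check S_{m,n}$ from the visual boundary of complex hyperbolic space to the Shilov boundary of the symmetric space of $\PU(m,n)$, and maximality of the Toledo invariant translates into an extremal positivity condition on the pull-back of the Bergmann cocycle. The first step is to convert this positivity into an incidence statement: I would show that for almost every triple of distinct points lying on a chain of $\partial\H^p_\C$ (the boundary of a complex geodesic), the image triple lies in the Shilov boundary of a $2m$-dimensional $(m,m)$-signature subspace of $\C^{m+n}$, i.e.\ on a \emph{line} of the target incidence geometry. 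Thus $\phi$ takes chains to lines almost everywhere.

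The second step, which I expect to be the principal obstacle, is a rigidity result for such chain-to-line maps in the non-tube case $m\neq n$. Concretely, I would seek to classify the measurable maps $\phi:\partial\H^p_\C\to\check S_{m,n}$ that send chains into lines and are equivariant under a Zariski dense representation, aiming to show that, when $m\neq n$, $\phi$ is necessarily the boundary restriction of a totally geodesic holomorphic $\rho$-equivariant embedding $f:\H^p_\C\hookrightarrow\Xx_{m,n}$. The assumption $m\neq n$ is used crucially: in tube type ($m=n$) every Shilov triple is extremal and no canonical linear structure is singled out, whereas for $m<n$ the lines form a proper rigid subfamily and the way two lines can meet (in isotropic subspaces of controlled dimension) imposes strong algebraic constraints on the image. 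The strategy would be to first exploit chain preservation together with classical Cartan-type chain rigidity of $\partial\H^p_\C$ to promote $\phi$ to a morphism of incidence geometries, and then to combine the Grassmannian description of $\check S_{m,n}$ with the $\SU(1,p)$-orbit structure on products of such Grassmannians to identify the image of $\phi$ with the Shilov boundary of a totally geodesically embedded $\H^p_\C$, in the spirit of \cite{BICartan,KMrank1}.

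Once $\phi$ is recognised as the boundary of an equivariant totally geodesic holomorphic map $f:\H^p_\C\to\Xx_{m,n}$, the conclusion follows as in \cite{BICartan,KMrank1}: the image of $f$ is the symmetric space of a closed Lie subgroup $H\subset\PU(m,n)$ locally isomorphic to a product of $\PU(1,p)$ with a compact centralizer, and $f$ is equivariant under a continuous representation $\wt\rho:\SU(1,p)\to\PU(m,n)$ whose restriction to $\G$ is $\rho$. Zariski density of $\rho$ then rules out the compact-factor ambiguity and yields the desired extension of $\rho$ to all of $\SU(1,p)$.
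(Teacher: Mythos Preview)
Your first step is correct and matches the paper: a Zariski dense maximal representation admits an equivariant measurable boundary map $\phi:\partial\H^p_\C\to\Ss_{m,n}$ whose essential image is Zariski dense, and maximality forces $\phi$ to carry almost every triple on a chain to a triple on an $m$-chain (Propositions~\ref{prop:boundary map}, \ref{prop:phiZariskidense} and Corollary~\ref{cor:incidence preserved}). So the reduction to a rigidity statement for chain-preserving measurable maps is exactly right.

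Where your outline diverges is in what that rigidity statement says and how it is proved. You aim to show directly that $\phi$ is the boundary trace of a $\rho$-equivariant totally geodesic holomorphic embedding $\H^p_\C\hookrightarrow\Xx_{m,n}$; the paper instead proves the purely algebraic statement (Theorem~\ref{thm:phirational}) that any Zariski dense measurable chain-preserving map $\phi:\partial\H^p_\C\to\Ss_{m,n}$ with $1<m<n$ agrees almost everywhere with a \emph{rational} map. The proof of this is the heart of the paper and involves machinery your sketch does not touch: a Heisenberg model $\Hh_{m,n}(x)$ for the complement of a point $x$ in the Shilov boundary, a projection $\pi_x$ onto the space of $m$-chains through $x$, and a classification of how $k$-vertical $m$-chains project under $\pi_x$ (Section~\ref{sec:S_mn}). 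These are used to build a measurable cocycle $\alpha:M_x\times(\partial\H^p_\C)^x\to M_{\phi(x)}$ with respect to which $\phi$ is equivariant, where $M_x\cong\fru(1)$ and $M_{\phi(x)}\cong\fru(m)$ are the centers of the relevant unipotent radicals; the non-tube hypothesis $m<n$ enters precisely in showing that $\alpha$ is essentially constant on each vertical chain (Propositions~\ref{prop:easy}--\ref{prop:betasurj} and the error-subspace analysis of Section~\ref{ssec:4.3}), whence the restriction of $\phi$ to almost every chain is rational. Section~\ref{sec:reduction} then bootstraps rationality on chains to global rationality of $\phi$ via projections $\phi_x$ and repeated use of a Zimmer-type lemma on rational slices.

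Your final extension step also differs. Having a rational equivariant boundary map in hand, the paper runs Margulis' graph argument directly: complexify, take the Zariski closure of the graph of $\rho$ in $\SL(p+1,\C)\times\PSL(m+n,\C)$, and use the rational map $T$ to show that the fiber over the identity is trivial, so the closure is the graph of a homomorphism. No equivariant totally geodesic embedding is produced beforehand; such an embedding follows only \emph{a posteriori}, and indeed for $1<m<n$ the content of the theorem is that no Zariski dense maximal $\rho$ exists at all (Corollary~\ref{cor:noZariskidense}), so your intermediate goal of exhibiting an equivariant holomorphic $f$ with Zariski dense isometry group would have to be a vacuous statement in that range. Your outline is not wrong in spirit, but the phrase ``combine the Grassmannian description with the $\SU(1,p)$-orbit structure'' does not engage the actual obstacle, which is promoting a merely measurable incidence-preserving map to an algebraic one.
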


This immediately implies the following:
\begin{cor}\label{cor:noZariskidense} 
 Let $\G$ be a lattice in $\SU(1,p)$ with $p>1$. There are no Zariski dense maximal representations of $\G$ into $\SU(m,n)$, if $1<m<n$.
\end{cor}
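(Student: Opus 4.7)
The plan is to deduce Corollary \ref{cor:noZariskidense} directly from Theorem \ref{thm:Zariskisuperrigidity} by ruling out continuous homomorphisms $\SU(1,p)\to\PU(m,n)$ with Zariski dense image whenever $1<m<n$. First I would reduce from $\SU(m,n)$ to $\PU(m,n)$: given a hypothetical Zariski dense maximal $\rho:\G\to\SU(m,n)$, composing with the central isogeny $\SU(m,n)\to\PU(m,n)$ produces a representation $\bar\rho:\G\to\PU(m,n)$ that is still Zariski dense (the isogeny is a surjective morphism of algebraic groups) and still maximal, since the Toledo invariant descends to the adjoint quotient. Theorem \ref{thm:Zariskisuperrigidity} then supplies a continuous extension $\tilde\rho:\SU(1,p)\to\PU(m,n)$ whose restriction to $\G$ is $\bar\rho$.

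Next I would argue that $\tilde\rho$ is in fact surjective. The image $\tilde\rho(\SU(1,p))$ is a connected Lie subgroup of $\PU(m,n)$ whose Lie algebra is a semisimple subalgebra of $\frsu(m,n)$, hence algebraic, so this Lie subgroup coincides with its own Zariski closure in $\PU(m,n)$. Since it contains $\bar\rho(\G)$, and $\bar\rho(\G)$ is Zariski dense by assumption, the image must exhaust $\PU(m,n)$.

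To finish, I would use that $\SU(1,p)$ is almost simple, which forces $\ker\tilde\rho$ to be central in $\SU(1,p)$: the only alternative, $\ker\tilde\rho=\SU(1,p)$, is incompatible with the surjectivity onto the nontrivial group $\PU(m,n)$. Consequently $d\tilde\rho$ induces an isomorphism of real Lie algebras $\frsu(1,p)\cong\frsu(m,n)$, and the classification of real forms of $\mathfrak{sl}_{m+n}(\C)$ then forces $\{m,n\}=\{1,p\}$, contradicting $1<m<n$. The one point that requires care is the Zariski-closedness of $\tilde\rho(\SU(1,p))$, but this is a standard property of images of semisimple Lie groups inside real algebraic groups; all other steps are essentially formal consequences of Theorem \ref{thm:Zariskisuperrigidity}.
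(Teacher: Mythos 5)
Your proposal is correct and is essentially the argument the paper leaves implicit behind the phrase ``This immediately implies the following'': one passes to $\PU(m,n)$ via the central isogeny (maximality and Zariski density survive because the kernel is finite, so $H^2_{\rm cb}$ is unchanged and surjective morphisms carry Zariski dense sets to Zariski dense sets), invokes Theorem \ref{thm:Zariskisuperrigidity} to get a continuous extension $\tilde\rho\colon\SU(1,p)\to\PU(m,n)$, observes that the image is a Zariski closed semisimple subgroup containing a Zariski dense set and hence all of $\PU(m,n)$, and concludes from almost-simplicity of $\SU(1,p)$ that $\frsu(1,p)\cong\frsu(m,n)$, which is impossible when $1<m<n$ (one could equally well compare real ranks, $1$ versus $m>1$, instead of appealing to the classification of real forms). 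No gaps.
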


Exploiting results of \cite{tight}, and the classification of maximal representations between Hermitian Lie groups \cite{Ham1, Ham2, HamP}, we are able to use our main theorem to give a structure theorem for all maximal representations $\rho:\G\to\SU(m,n)$.
\begin{thm}\label{thm:general}
 Let $\rho:\G\to \SU(m,n)$ be a maximal representation. Then the Zariski closure $L=\ov {\rho(\G)}^Z$ splits as the product $\SU(1,p)\times L_t\times K$ where $L_t$ is a Hermitian Lie group of tube type without irreducible factors that are virtually isomorphic to $\SU(1,1)$, and $K$ is a compact subgroup of $\SU(m,n)$.
 
Moreover there exists an integer $k$ such that the inclusion of $L$ in $\SU(m,n)$ can be realized as
$$\Delta\times i\times {\rm Id}:L
\to \SU(1,p)^{m-k}\times\SU(k,k)\times K<\SU(m,n)$$ 
where $\Delta:\SU(1,p)\to \SU(1,p)^{m-k}$ is the diagonal embedding, $i: L_t\to \SU(k,k)$ is a tight holomorphic embedding and $K$ is contained in the compact centralizer of $\Delta\times i(L)$. 
\end{thm}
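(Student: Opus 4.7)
The plan is to reduce Theorem~\ref{thm:general} to the Zariski-dense case already established (Theorem~\ref{thm:Zariskisuperrigidity} and Corollary~\ref{cor:noZariskidense}) by analysing the Zariski closure $L=\overline{\rho(\G)}^Z$ via the classification of tight embeddings. First I would factor $L$ (up to finite index) as $L_{nc}\cdot K$, with $K$ a maximal compact normal subgroup and $L_{nc}$ semisimple without compact factors, and observe that the composed map $\bar\rho:\G\to L_{nc}$ inherits both Zariski density and maximality from $\rho$. By the results of \cite{tight} this implies that the inclusion $\iota:L_{nc}\hookrightarrow\SU(m,n)$ is a tight homomorphism; in particular $L_{nc}$ is semisimple Hermitian.

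Next I would invoke the classification of tight homomorphisms of Hamlet and Hamlet-Pozzetti \cite{Ham1,Ham2,HamP} to conclude that $\iota$ is holomorphic (after possibly twisting some simple factors by complex conjugation) and splits $L_{nc}=L_{nt}\times L_t$ into a product of a tube-type part $L_t$ and a product $L_{nt}$ of non-tube-type simple Hermitian factors, each embedded tightly and holomorphically. For each simple factor $L_i$ of $L_{nt}$, the projected representation $\rho_i:\G\to L_i$ is Zariski dense and maximal, and the classification forces the image of $L_i$ to sit inside a block of the form $\SU(r_i,s_i)\subset\SU(m,n)$ with $r_i<s_i$. Corollary~\ref{cor:noZariskidense} rules out $1<r_i<s_i$, so $r_i=1$; the rank-one rigidity theorem of \cite{BICartan,KMrank1} then forces $L_i\cong\SU(1,p)$ with $\rho_i$ coming from a totally geodesic holomorphic embedding $\SU(1,p)\hookrightarrow L_i$ restricted to $\G$. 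Should $L_{nt}$ contain several such factors, Goursat's lemma applied to the common extension $\wt\rho:\SU(1,p)\to\prod_i L_i$ combined with the Zariski density of $\rho(\G)$ would collapse them to a single, diagonally embedded copy of $\SU(1,p)$ inside $\SU(1,p)^{m-k}\subset\SU(m,n)$.

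Finally, the tube-type summand $L_t$ embeds as a tight holomorphic subgroup of a complementary block $\SU(k,k)\subset\SU(m,n)$ for the appropriate integer $k$, and the compact subgroup $K$ centralises $L_{nc}$ in $\SU(m,n)$ by normality, yielding the stated decomposition with embedding $\Delta\times i\times\Id$. The absence of factors virtually isomorphic to $\SU(1,1)$ in $L_t$ is forced by the classification: such a factor would either be absorbed into the diagonal $\SU(1,p)$ part via the rank-one rigidity theorem, or else produce a non-tight subdomain, contradicting the tightness of $\iota$. The main obstacle will be the detailed bookkeeping needed to deduce from the Hamlet-Pozzetti classification the precise diagonal form $\Delta$, and in particular to exclude several distinct non-tube-type simple factors in $L_{nt}$: this combines Goursat's lemma on Zariski-dense images with the rigidity of totally geodesic holomorphic embeddings of complex hyperbolic space into Hermitian symmetric spaces.
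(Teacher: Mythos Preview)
Your overall strategy matches the paper's proof closely: use Theorem~\ref{thm:tight} to get the almost-splitting $L_{nc}\times K$, apply the Hamlet--Pozzetti classification (Theorem~\ref{thm:tightol}) to decompose $L_{nc}$ into tube-type and non-tube-type parts, and then use Corollary~\ref{cor:noZariskidense} together with rank-one rigidity to pin down each non-tube factor as $\SU(1,p)$ with the projection conjugate to the lattice inclusion. Your explicit treatment of the possibility of several non-tube factors (via Goursat's lemma and Zariski density) is more careful than the paper, which leaves implicit the observation that if each projection $\rho_i$ is conjugate to the inclusion $\G\hookrightarrow\SU(1,p)$ then the Zariski closure in $\prod_i L_i$ is the diagonal, forcing a single factor.

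There is, however, one genuine gap concerning the $\SU(1,1)$ factors. The classification you invoke (Theorem~\ref{thm:tightol}) has as a \emph{hypothesis} that no simple factor of $L_{nc}$ is locally isomorphic to $\SU(1,1)$; you therefore cannot deduce their absence from the classification itself, and certainly not after already having applied it. The paper handles this point \emph{first}: if $L_i$ were virtually $\SU(1,1)$, the projection $\rho_i:\G\to L_i$ would be a Zariski-dense maximal representation of a lattice in $\SU(1,p)$ with $p>1$ into $\PU(1,1)$, and this is ruled out directly by \cite{BICartan}. Your alternative justifications do not work: such a factor cannot be ``absorbed into the diagonal $\SU(1,p)$ part'' (it is a distinct simple factor, and there is no holomorphic totally geodesic map $\H^p_\C\to\H^1_\C$ for $p>1$), and $\SU(1,1)$ certainly embeds tightly in $\SU(m,n)$, so no contradiction with tightness arises. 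Move the exclusion of $\SU(1,1)$ factors to \emph{before} your appeal to the classification, and justify it via \cite{BICartan} as the paper does; then the rest of your argument goes through.
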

It is possible to show that there are no tube-type factors in the Zariski closure of the image of $\rho$ by imposing some non-degeneracy hypothesis on the associated linear representation of $\G$ into $\GL(\C^{m+n})$:
\begin{cor}\label{cor:general}
Let $\G$ be a  lattice in $\SU(1,p)$, with $p>1$ and let $ \rho$ be a maximal representation of $\G$ into $\SU(m,n)$. Assume that the associated linear representation of $\G$ on $\C^{n+m}$ has no invariant subspace on which the restriction of the Hermitian form has signature $(k,k)$ for some $k$. Then 
\begin{enumerate}
 \item $n\geq pm$,
 \item $\rho$ is conjugate to $\overline \rho\times \chi_{\rho}$ where $\ov\rho$ is the restriction to $\G$ of the diagonal embedding of $m$ copies of $\SU(1,p)$ in $\SU(m,n)$ and  $\chi_\rho$ is a representation $\chi_{\rho}:\G\to K$, where $K$ is a compact group. 
\end{enumerate}
\end{cor}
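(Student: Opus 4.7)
The plan is to deduce this corollary directly from Theorem \ref{thm:general} by using the non-degeneracy hypothesis to rule out the tube-type factor in the Zariski closure.

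First, I would apply Theorem \ref{thm:general} to the representation $\rho$. This provides a decomposition of the Zariski closure $L=\overline{\rho(\Gamma)}^Z$ as $\SU(1,p)\times L_t\times K$, together with an embedding $\Delta\times i\times \Id: L\hookrightarrow \SU(1,p)^{m-k}\times \SU(k,k)\times K <\SU(m,n)$ for some integer $k$, with $i:L_t\to\SU(k,k)$ a tight holomorphic embedding. The crucial point is that inside $\SU(m,n)$ the three factors act block-diagonally on $\C^{m+n}$, so in particular the signature-$(k,k)$ subspace $W\subset\C^{m+n}$ on which $\SU(k,k)$ acts is preserved by the entire group $L$, and hence by $\rho(\Gamma)$. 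The non-degeneracy hypothesis forbids such a $\rho(\Gamma)$-invariant subspace, so we must have $k=0$; a tight holomorphic embedding of $L_t$ into the trivial group $\SU(0,0)$ then forces $L_t$ itself to be trivial.

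With $k=0$ and $L_t$ trivial, the embedding reduces to $\Delta\times \Id:\SU(1,p)\times K\hookrightarrow \SU(1,p)^m\times K<\SU(m,n)$. The diagonal image of $\SU(1,p)^m$ preserves a subspace of $\C^{m+n}$ of signature $(m,pm)$, and the compact factor $K$ lies in its centralizer and therefore acts on the negative-definite orthogonal complement of that subspace. Hence $n-pm\geq 0$, which proves (1), and the factorization $\rho=\overline\rho\times\chi_\rho$ claimed in (2) is read off directly from the block product structure of the inclusion $L\hookrightarrow\SU(m,n)$, with $\overline\rho$ the restriction of $\Delta$ to $\Gamma\subset\SU(1,p)$ and $\chi_\rho$ the projection onto $K$.

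All the substantive geometric and cohomological input is absorbed into Theorem \ref{thm:general}; the argument above is essentially a bookkeeping translation between the algebraic structure of the Zariski closure and the non-degeneracy hypothesis on the linear representation. I therefore do not anticipate any genuine obstacle beyond the verification — immediate from block-diagonality — that the signature-$(k,k)$ subspace stabilized by $\SU(k,k)$ is actually $\rho(\Gamma)$-invariant.
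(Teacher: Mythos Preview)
Your proposal is correct and follows essentially the same route as the paper: apply Theorem \ref{thm:general}, observe that the signature-$(k,k)$ block $W$ carried by the $\SU(k,k)$ factor is $\rho(\Gamma)$-invariant, and use the hypothesis to force $k=0$. The paper justifies the invariance of $W$ under $K$ via the remark that $K$ commutes with $M=\SU(1,p)^{m-k}\times\SU(k,k)$ and the irreducible $M$-summands have pairwise distinct signatures; your ``block-diagonal'' reading of the product inclusion in Theorem \ref{thm:general} amounts to the same thing. One small imprecision: $K$ need not act only on the negative-definite complement of the $(m,pm)$ block (it can mix the $m$ copies of $\C^{1,p}$), but this does not affect the inequality $n\geq pm$ nor the factorization $\rho=\overline\rho\times\chi_\rho$, which follows directly from the splitting $L\cong\SU(1,p)\times K$.
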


Recently Klingler proved that all the representations of uniform complex hyperbolic lattices that satisfy a technical algebraic condition are locally rigid \cite{Klingler}. As a particular case his main theorem implies that if $\G$ is a cocompact lattice in $\SU(1,p)$ and $\rho:\G\to \SU(m,n)$ is obtained by restricting to $\G$ the diagonal inclusion of $\SU(1,p)$ in $\SU(m,n)$, then $\rho$ is locally rigid. Since the invariant defining the maximality of a representation is constant on connected components of the representation variety, we get a new proof of Klingler's result  in our specific case, and the generalization of this latter result to non-uniform lattices:
\begin{cor}\label{cor:local rigidity}
 Let $\G$ be a lattice in $\SU(1,p)$, with $p>1$, and let $\rho$ be the restriction to $\G$ of the diagonal embedding of $m$ copies of $\SU(1,p)$ in $\SU(m,n)$. Then $\rho$ is locally rigid.
\end{cor}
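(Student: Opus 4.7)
The plan is to combine Corollary~\ref{cor:general} with continuity of the Toledo invariant.

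First, $\rho$ is maximal: the tautological embedding $\G\hookrightarrow \SU(1,p)$ has maximal Toledo invariant (equal to the volume of $\G\backslash\H^p_\C$), and composing with the tight holomorphic diagonal $\SU(1,p)\hookrightarrow \SU(m,mp)\subset \SU(m,n)$ scales this invariant by $m$, yielding maximality of $\rho$. The Toledo invariant is continuous on $\Hom(\G,\SU(m,n))$ and takes discrete values in a bounded range, so it is locally constant; every representation $\rho'$ in a small enough neighborhood of $\rho$ is thus again maximal.

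Second, I verify that $\rho'$ satisfies the non-degeneracy hypothesis of Corollary~\ref{cor:general}, namely that $\C^{m+n}$ carries no $\rho'$-invariant subspace of Hermitian signature $(k,k)$ with $k>0$. For $\rho$ itself, $\C^{m+n}$ splits $\G$-equivariantly as $m$ copies of the standard $\SU(1,p)$-module $\C^{p+1}$ of signature $(1,p)$ together with a positive definite summand of dimension $n-mp$; every $\rho$-invariant subspace is thus of the form $\bigoplus_{i\in I}\C_i^{p+1}\oplus V$ with $V$ in the positive definite complement, and has signature $(|I|,\,|I|p+\dim V)$, which equals $(k,k)$ only when $k=0$ since $p>1$. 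Every such subspace is moreover non-degenerate. If a sequence $\rho_n\to \rho$ carried invariant subspaces $W_n$ of signature $(k,k)$ with $k>0$, then by compactness of the Grassmannian $W_n$ would subconverge to a $\rho$-invariant $W_0$; non-degeneracy of $W_0$ would force its signature to agree with that of the nearby $W_n$, namely $(k,k)$, contradicting the computation above.

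Third, Corollary~\ref{cor:general} applied to $\rho'$ gives, up to conjugation in $\SU(m,n)$, a splitting $\rho'=\bar\rho'\times \chi_{\rho'}$, with $\bar\rho':\G\to \SU(1,p)$ close to the tautological embedding and $\chi_{\rho'}:\G\to K$ valued in the compact centralizer. Calabi--Vesentini--Weil local rigidity for lattices in $\SU(1,p)$ with $p\geq 2$ gives $\bar\rho'$ conjugate to the tautological embedding inside $\SU(1,p)$; the compact factor is controlled by the corresponding Matsushima-type vanishing for representations of $\G$ into the relevant compact centralizer. Combining these rigidifications yields that $\rho'$ is conjugate to $\rho$ in $\SU(m,n)$.

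The main obstacle is the semi-continuity argument in the second paragraph: the non-degeneracy hypothesis of Corollary~\ref{cor:general} is neither open nor closed on the representation variety, so its persistence under small deformation must be verified by hand. Here it succeeds because every $\rho$-invariant subspace is non-degenerate with signature of the restricted form $(j,jp+\ell)$, allowing one to pass to the Grassmannian limit without signature collapse; this crucially uses $p>1$.
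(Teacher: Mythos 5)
Your proposal takes a genuinely different route from the paper. The paper invokes Theorem~\ref{thm:general} and argues by contradiction in the character variety: if deformations $\rho_i\to\rho_0$ had non-trivial tube-type factors, then (after killing the compact part and conjugating) they would all land in a proper subgroup $\SU(m,pm-(p-1))$, and closedness in $\Hom(\G,G)/\!/G$ would force $\rho_0$ into the same subgroup, contradicting the signature of its invariant subspaces. You instead verify directly that nearby representations satisfy the non-degeneracy hypothesis of Corollary~\ref{cor:general} and apply that corollary. Your Grassmannian compactness argument in the second paragraph is correct and, in my view, cleaner than the paper's limiting argument, because it avoids the delicate issue of whether the conjugating elements stay bounded. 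Your description of the $\rho_0$-invariant subspaces as coordinate sums $\bigoplus_{i\in I}\C_i^{p+1}\oplus V$ is not quite the full list (by Schur they are $U\otimes\C^{p+1}\oplus V$ with $U\subseteq\C^m$ arbitrary, since the centralizer of the diagonal $\SU(1,p)$ in $(\C^{p+1})^m$ is $\GL_m$), but every such subspace still has signature $(\dim U,\,p\dim U+\dim V)$, so your conclusion and the non-degeneracy are unaffected.

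The third paragraph, however, misreads Corollary~\ref{cor:general}. That corollary says $\rho'$ is conjugate to $\ov\rho\times\chi_{\rho'}$ where $\ov\rho$ is \emph{the} diagonal embedding of $m$ copies of $\SU(1,p)$ restricted to $\G$ (i.e.\ literally $\rho_0$), not some nearby representation $\bar\rho'\colon\G\to\SU(1,p)$. There is therefore nothing left to rigidify with Calabi--Vesentini--Weil; that step is both unnecessary and based on a wrong reading of the statement. Once corrected, your argument simplifies: every nearby maximal $\rho'$ satisfying the non-degeneracy hypothesis is already conjugate to $\rho_0\times\chi_{\rho'}$.

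Finally, your appeal to ``Matsushima-type vanishing for representations of $\G$ into the relevant compact centralizer'' is not a theorem I recognize, and for lattices in $\SU(1,p)$ with positive first Betti number one genuinely has non-trivial homomorphisms into tori, so no such vanishing can hold in general. This gap should be acknowledged; note however that the paper's own proof is equally silent on this point, asserting ``conjugate to $\rho_0$ up to a character in the compact centralizer \dots\ In particular this would imply that $\rho_0$ is locally rigid'' without further argument. So while the compact twist is a real subtlety, your proposal handles it no worse than the paper does, and the structural comparison above is the more informative part of the review.
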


Our proof of Theorem \ref{thm:Zariskisuperrigidity} is inspired by Margulis' beautiful proof of superrigidity for higher rank lattices: in order to show that a representation $\rho:\G\to G$ extends to the group $H$ in which $\G$ sits as a lattice, it is enough to exhibit a $\rho$-equivariant algebraic map $\phi:H/P\to G/L$ for some parabolic subgroups $P$ of $H$ and $L$ of $G$. The existence of measurable $\rho$-equivariant boundary maps $\phi: H/P\to G/L$ where $P<H$ is a minimal parabolic subgroup and $G$ is a linear algebraic group is by now well understood \cite{SUpq, Fur, BF}, and the crucial part in the proof of superrigidity for our representations is to show that such a measurable equivariant boundary map must indeed be algebraic. 
In general not every representation of a complex hyperbolic lattice is superrigid: for example Livne constructed in his PhD dissertation a lattice in $\SU(1,2)$ that surjects onto a free group (cfr. \cite[Chapter 16]{DelMos}), moreover Mostow constructed examples of lattices $\G_1,\G_2$ in $\SU(1,2)$ admitting a surjection $\G_1\twoheadrightarrow \G_2$ with infinite kernel (cfr. \cite{Most, Tolmaps}).
These examples show that many representations of complex hyperbolic lattice do not extend to $\SU(1,2)$ and hence some additional information on the boundary map $\phi$ is needed in order to deduce its algebraicity.

We restrict our interest to maximal representations precisely to be able to gather some information on a measurable boundary map $\phi$. The maximality of a representation $\rho$ can be rephrased as a property of the induced pullback map $\rho^*:{\rm H}^2_{\rm cb}(G,\R)\to {\rm H}^2_{\rm b}(\G,\R) $ in bounded cohomology. One of the advantages of bounded cohomology with respect to ordinary cohomology is that it can be isometrically computed from the complex of $\rm L^\infty$ functions on some suitable boundary of the group \cite{BMGAFA} and, in all geometric cases known so far \cite{BIAppendix}, the pullback map in bounded cohomology can be implemented using boundary maps. In particular we exploit results of \cite{Formula} and we show that the fact that the representation $\rho$ is maximal implies that a $\rho$-equivariant measurable boundary map must preserve some incidence structure on the boundary (this was proven in \cite{BICartan} in the case in which the image is of rank one).

To describe more precisely this incidence structure, recall that one of the key features of the complex hyperbolic space is the existence of complex geodesics tangent to any vector in $T^1\H_\C^p$: these are precisely the totally geodesic holomorphic embeddings of the Poincar\'e disc in $\H_\C^p$. The boundaries of these subspaces produce a family of circles in $\deH^p$, the socalled \emph{chains}, that form an incidence structure  that was first studied by  Cartan in \cite{Cartan}.
Under many respects, the natural generalization to higher rank of the visual boundary of the complex hyperbolic space is the Shilov boundary of a Hermitian symmetric space and the generalization of a complex geodesic, when maximal representations are involved, is a  maximal tube-type subdomain. 
All these objects have an explicit linear description: it is well known that the boundary of the complex hyperbolic space can be identified with the set of isotropic lines in $\C^{p+1}$, and it is easy to check that a triple of lines $x,y,z$ is contained in a chain if and only if $\dim\,\<x,y,z\>=2$.
Similarly the Shilov boundary $\Ss_{m,n}$ of $\SU(m,n)$ can be described as the set of maximal isotropic subspaces of $\C^{m+n}$ and, again, a triple of transverse isotropic subspaces $x,y,z$ in $\Ss_{m,n}$ is contained in the boundary of a tube-type subdomain precisely when $\dim\,\<x,y,z\>=2m$. In such case we will say that $x,y,z$ are contained in an \emph{$m$-chain}.

As it turns out, 
if $\rho:\G\to \SU(m,n)$ is a maximal representation and $\phi:\deH^p\to \Ss_{m,n}$ is a measurable $\rho$-equivariant boundary map, 
 then $\phi$ induces a map from the chain geometry of $\deH^p$ to the geometry whose space is $\Ss_{m,n}$ and whose lines are the $m$-chains. 
Therefore most of this paper is devoted to  the study of these geometries. We generalize some results of Cartan \cite{Cartan} and Goldman \cite{Goldman} and this allows us to prove a strong rigidity result for measurable maps that preserve this geometry, that is a higher rank analogue of the main theorem of \cite{Cartan}:
\begin{thm}\label{thm:phirational}
 Let $p>1$, $1<m<n$ and let $\phi:\deH^p\to\Ss_{m,n}$ be a measurable map whose essential image is Zariski dense. Assume that
 , for almost every triple with $\dim\<x,y,z\>=2$, it holds $\dim\<\phi(x),\phi(y),\phi(z)\>=2m$. Then  $\phi$ coincides almost everywhere with a rational map.
\end{thm}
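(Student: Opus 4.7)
The approach is modeled on Cartan's classical rigidity theorem for chain-preserving boundary maps of $\partial\H^p_\C$, adapted to the higher-rank target $\Ss_{m,n}$. I split the argument into three stages: first extract from $\phi$ an induced map from chains to $m$-chains; then use the incidence structure of pairs of intersecting chains to rigidify $\phi$ along each chain; finally assemble these data and apply Zariski density to produce a global rational map.

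\emph{Step 1 (from point map to chain map).} Parametrising the space of chains of $\partial\H^p_\C$ by pairs of distinct points and applying Fubini, for almost every chain $C$ the restriction $\phi|_C$ is well defined almost everywhere on $C$. Any three distinct points $x,y,z \in C$ satisfy $\dim\<x,y,z\> = 2$, so by hypothesis $\dim\<\phi(x),\phi(y),\phi(z)\> = 2m$. Since two transverse points of $\Ss_{m,n}$ already determine a unique $m$-chain, namely the set of maximal isotropic subspaces contained in their $2m$-dimensional span, fixing two generic image points $\phi(x_0),\phi(y_0)$ — transverse thanks to Zariski density of the essential image — forces every other $\phi(z)$ with $z \in C$ to lie in one and the same $m$-chain $\Phi(C)$. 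This produces a measurable map $\Phi$ from the space of chains of $\partial\H^p_\C$ to the space of $m$-chains of $\Ss_{m,n}$, fibrewise compatible with $\phi$.

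\emph{Step 2 (rigidity along one chain).} Fix a chain $C \simeq \partial\H^1_\C$; the map $\phi|_C$ then takes values in $\Phi(C)$, canonically identified with the Shilov boundary of a tube-type subdomain modelled on $\SU(m,m)$. To pin down $\phi|_C$ I use an auxiliary chain $C'$ meeting $C$ at a point $p$: the $m$-chain $\Phi(C')$ shares $\phi(p)$ with $\Phi(C)$, and for $x \in C$, $y \in C'$ in generic position the triple $(p,x,y)$ carries a Cartan-type angular invariant on $\partial\H^p_\C$. The higher-rank analogue of Goldman's classification of such invariants — supplied by the paper's extensions of the results of \cite{Cartan} and \cite{Goldman} to $m$-chains — forces $\phi$ to intertwine this invariant with its $\Ss_{m,n}$-counterpart, which is rigid enough to identify $\phi|_C$ almost everywhere with the restriction of an algebraic map $\partial\H^1_\C \to \Phi(C)$. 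Letting the auxiliary chain $C'$ and the basepoint $p$ vary, the resulting algebraic descriptions depend measurably (hence, essentially continuously) on parameters.

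\emph{Step 3 (globalisation).} The chain-by-chain algebraic data must finally be glued into a single rational map $\partial\H^p_\C \to \Ss_{m,n}$. Zariski density of the essential image rules out the possibility that a positive-measure set collapses into a proper subvariety, and a standard algebraicity argument in the spirit of Margulis' lemma upgrades the measurable chain-compatible $\phi$ to a rational map agreeing with it almost everywhere. The main obstacle I anticipate is \emph{Step 2}: in contrast with Cartan's rank-one setting where the image of each chain lies in a circle and is controlled by a one-variable cross-ratio, here $\Phi(C)$ is a positive-dimensional non-abelian flag variety, so eliminating the measurable freedom in $\phi|_C$ genuinely requires the new higher-rank incidence geometry — precisely the analogues of Goldman's invariants for $m$-chains whose development occupies the bulk of the paper.
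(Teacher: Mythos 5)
Your high-level decomposition — extract a chain-to-$m$-chain map $\Phi$, rigidify $\phi$ along individual chains, then globalize — does match the paper's architecture (Lemma \ref{lem:1} / Corollary \ref{cor:map on chains}, Theorem \ref{thm:restriction rational}, Section \ref{sec:reduction}), and you correctly identify Step~2 as the crux. But Step~2 as written is not an argument; it is a label. Invoking "the higher-rank analogue of Goldman's classification of invariants" and asserting it is "rigid enough to identify $\phi|_C$ with an algebraic map" does not explain what the invariant is, why it is rigid, or how measurable intertwining of an invariant upgrades $\phi|_C$ from measurable to rational. Note also that the angular invariant/Bergmann cocycle does \emph{not} reappear in the proof of Theorem \ref{thm:phirational}: it is used earlier (Corollary \ref{cor:incidence preserved}) to establish the \emph{hypothesis} that $\phi$ preserves chain geometry; once that hypothesis is in hand the Bergmann cocycle plays no further role.

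The actual mechanism, which your sketch does not reach, is a unipotent cocycle argument in the Heisenberg model. Fixing a generic $x$, the stabilizer of $x$ has a nilpotent radical whose center $M_x\cong\fru(1)$ acts simply transitively on each punctured chain through $x$; likewise $M_{\phi(x)}\cong\fru(m)$ acts on vertical $m$-chains through $\phi(x)$. Because $\phi$ sends chains into $m$-chains, one gets a measurable cocycle $\alpha:M_x\times(\deH^p)^x\to M_{\phi(x)}$ with $\phi(e\cdot z)=\alpha(e,z)\phi(z)$ (Proposition \ref{prop:cocycle}). The entire difficulty is to kill the dependence of $\alpha(e,z)$ on the base point $z$ within a vertical chain, so that $\alpha$ becomes a measurable homomorphism $\fru(1)\to\fru(m)$ — hence linear, hence algebraic. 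This is done by choosing auxiliary points $w$ and comparing lifts of $(m,k)$-circles, using the precise description of the "error groups" $M_T$ (Proposition \ref{prop:errors}), the surjectivity of the map $\beta$ onto $\Gr_k(v_\infty)^2$ (Proposition \ref{prop:betasurj}), and a span argument (Lemma \ref{cor:aeE(z)cuts}). In particular the hypothesis $m<n$ is not decorative: it is exactly what makes $\beta$ surjective and the error subgroups intersect trivially. Your proposal never uses $m<n$, and indeed any argument along your lines that does not use it would have to be wrong, since for $m=n$ one expects flexibility rather than rigidity. Finally, your Step~3 gestures at a "standard Margulis-type algebraicity argument," but the paper's globalization is genuinely two further reductions: circles to complex affine lines via the inversion $z\mapsto(z-p)^{-1}$, then lines to all of $\C^{p-1}$ via Zimmer's fibrewise-rationality lemma (Lemma \ref{lem:R^nxR^m}), and only then reconstruction of $\phi$ from the maps $\phi_{t_i}$ using the birational isomorphism $\beta_{\phi(t_1),\ldots,\phi(t_l)}$. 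So the gap is concrete: the heart of the proof — converting "preserves incidence" into "rational along chains" — is missing, and with it the essential use of the non-tube-type hypothesis.
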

\subsection*{Outline of the paper}
In Section \ref{sec:Toledo}, after recalling the relevant concepts about Hermitian symmetric spaces and continuous bounded cohomology, we prove that a measurable boundary map associated with a maximal representation induces a map between chain geometries. Sections \ref{sec:S_mn} to \ref{sec:reduction} are devoted to prove Theorem \ref{thm:phirational}: in Section \ref{sec:S_mn} we study the chain geometry of $\Ss_{m,n}$ and prove some properties of the incidence structure of chains; in Section \ref{sec:chain} we show that the restriction to almost every chain of a measurable boundary map associated with a maximal representation is rational; in Section \ref{sec:reduction} we show that this information is already enough to conclude. We finish the article with  Section \ref{sec:rational}, where we prove all the remaining results announced in this introduction.
\section{Preliminaries}\label{sec:Toledo}

\subsection{Hermitian Symmetric spaces}
 Let $G$ be a connected semisimple Lie group of noncompact type with finite center and let $K$ be a maximal compact subgroup. We will denote by $\Xx=G/K$ the associated symmetric space. Throughout this article we will be only interested in \emph{Hermitian} symmetric spaces, that is in those symmetric spaces that admit a $G$-invariant complex structure $J$. It is a classical fact \cite[Theorem III.2.6]{Koranyi} that these symmetric spaces admit a bounded domain realization, that means that they are biholomorphic to a bounded convex subspace of $\C^n$ on which $G$ acts via biholomorphisms. An Hermitian symmetric space is said to be \emph{of tube-type} if it is also biholomorphic to a domain of the form $V+i\Om$ where $V$ is a real vector space and $\Om\subset V$ is a proper convex open cone. Hermitian symmetric spaces were classified by Cartan \cite{Cartan-class}, and are the symmetric spaces associated to the exceptional Lie groups $\E_7(-25)$ and $\E_6(-14)$ together with 4 families of classical domains: the 
ones 
associated to $\SU(p,q)$, of type $I_{p,q}$ in the standard terminology\footnote{In Cartan's original terminology \cite{Cartan-class} the families $III_p$ and $IV_p$ are exchanged}, the ones associated to $\SO^*(2p)$, of type $II_{p}$, the symmetric spaces, $III_p$, of the groups $\Sp(2p,\R)$, and the symmetric spaces of the group $IV_p$ associated to $\SO_0(2,p)$. It is well known that the only spaces that are not of tube type are the symmetric space of $\E_6(-14)$ and the families $I_{p,q}$ with $q\neq p$ and $II_p$ with $p$ odd. It follows from the classification that any Hermitian symmetric space contains maximal tube-type subdomains, and those  are all conjugate under the $G$-action,  are isometrically and holomorphically embedded and have the same rank as the ambient symmetric space.

The $G$-action via biholomorphism on the bounded domain realization of $\Xx$ extends continuously on the topological boundary $\partial \Xx$. If the real rank of $G$ is greater than or equal to two, $\partial \Xx$ is not an homogeneous $G$-space, but contains a unique closed $G$-orbit, the \emph{ Shilov boundary} $\Ss_G$ of $\Xx$. If $\Xx$ is irreducible,  the stabilizer of any point $s$ of $\Ss_G$ is a maximal parabolic subgroup of $G$. In the reducible case, if $\Xx=\Xx_1\times\ldots\times\Xx_n$ is the de Rham decomposition in irreducible factors whose isometry group is $G_i$,  then $\Ss_G$ splits as the product $\Ss_{G_1}\times\ldots\times\Ss_{G_n}$ as well. 
Moreover when $\Yy$ is a maximal tube-type subdomain of $\Xx$, the Shilov boundary of $\Yy$ embeds in the Shilov boundary of $\Xx$.

The diagonal action of $G$ on the pairs of points $(s_1,s_2)\in \Ss_G^2$ has a unique open orbit corresponding to pairs of opposite parabolic subgroups. Two points in $\Ss_G$ are \emph{transverse} if they belong to this open orbit. Whenever a pair $(s_1,s_2)$ of transverse points of $\Ss_G$ is fixed, there exists a unique maximal tube-type subdomain $\Yy=G_T/K_T$ of $\Xx$ such that $s_i$ belongs to $\Ss_{G_T}$. In particular this implies that the Shilov boundaries of maximal tube-type subdomains define a rich incidence structure in $\Ss_G$.

Given three points in $\Ss_G$ there won't, in general,  exist a maximal tube-type subdomain $\Yy$ of $\Xx$ whose Shilov boundary contains all the three points. However it is possible to determine when this happens with the aid of the K\"ahler form. Recall that, since $\Xx$ is a Hermitian symmetric space, it is possible to define a differential two form via the formula
$$\o(X,Y)= g(X,JY)$$
where $g$ denotes  the $G$-invariant Riemannian metric normalized so that its minimal holomorphic sectional curvature is $-1$, and $J$ is the complex structure of $\Xx$.
Since $\o$ is $G$-invariant, it is closed: this is true for every $G$-invariant differential form on a symmetric space. This implies that $\Xx$ is a K\"ahler manifold and $\o$ is its K\"ahler form.
Let $\Xx^{(3)}$ denote the triples of pairwise distinct points in $\Xx$ and let us consider the function
\barr \b_{\Xx}:&\Xx^{(3)}&\to&\R\\ &(x,y,z)&\to& \frac 1{\pi }\int_{\Delta(x,y,z)}\o\earr
where we denote by $\Delta(x,y,z)$ any smooth geodesic triangle having $(x,y,z)$ as vertices. Since $\o$ is closed, Stokes theorem implies that $\b_{\Xx}$ is a well defined continuous $G$-invariant cocycle and it is proven in \cite{CO} that it extends continuously to the triples of pairwise transverse points in the Shilov boundary.
If a triple $(s_1,s_2,s_3)\in\Ss^3$ doesn't consist of pairwise transverse points, the limit of $\beta_\Xx(x_1^i,x_2^i,x_3^i)$ as $x_j^i$ approaches $s_j$ is not well defined, but Clerc proved that, restricting only to some preferred sequences (the one that converge \emph{radially} to $s_j$), it is possible to get a measurable extension  of $\b_\Xx$ to the whole Shilov boundary. The obtained extension
$\b_\Ss:\Ss^{3}_G\to \R$ is called the \emph{Bergmann cocycle}\footnote{We choose the normalization of \cite{Clerc}, the normalization chosen in \cite{DT} is such that $\b_{DT}=\pi\cdot\b_{\Ss}$, the one of \cite{Toledo} is such that $\b_{BIW}=\frac{\beta_{\Ss}}{2}$} and it is a measurable strict cocycle.
The maximality of the Bergmann cocycle detects when a triple of points is contained in the Shilov boundary of a tube-type subdomain:
\begin{prop}\label{prop:bergmann}
\begin{enumerate}
\item $\beta_\Ss$ is a strict alternating $G$-invariant cocycle with values in $[-\rk\Xx,\rk\Xx]$,
\item If $\beta_\Ss(s_1,s_2,s_3)=\rk\Xx$ then the triple $(s_1,s_2,s_3)$ is contained in the Shilov boundary of a tube-type subdomain.
\item The Bergmann cocycle is a complete invariant for the $G$ action on triples of pairwise transverse points contained in a tube type subdomain.
\end{enumerate}
\end{prop}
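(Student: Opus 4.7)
The plan is to reduce all three claims to the corresponding statements for the Bergmann cocycle on the polydisk. The main tool is the polydisk theorem: any Hermitian symmetric space $\Xx$ of rank $r$ contains a totally geodesic holomorphically embedded polydisk $\D^r$ whose Shilov boundary $(\partial\D)^r$ meets every $G$-orbit of pairwise transverse triples in $\Ss_G^3$. On the polydisk the K\"ahler form is the sum of the K\"ahler forms on the disk factors, so
$$\b_{\Xx}(x,y,z)=\sum_{i=1}^r \b_{\D}(x_i,y_i,z_i),$$
and the behaviour of $\b_\Ss$ on this polycircle is controlled coordinate by coordinate by the Bergmann cocycle on the disk.

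For part $(1)$, $G$-invariance and the alternating property of $\b_\Xx$ are immediate from the corresponding properties of $\o$; the cocycle relation on $\Xx^{(3)}$ follows from Stokes' theorem applied to a geodesic $3$-simplex, whose boundary is the alternating sum of four geodesic triangles and on which $d\o=0$. The bound $|\b_\Xx|\leq r$ then follows from the polydisk formula together with the elementary bound $|\b_\D|\leq 1$, which expresses that a geodesic triangle in the Poincar\'e disk has area at most $\pi$ by Gauss--Bonnet. To transfer these properties to the measurable extension $\b_\Ss$ I would appeal to Clerc's radial-limit construction: the value of $\b_\Ss$ on any triple arises as a pointwise limit of values of $\b_\Xx$, and the bound, the alternating property, the $G$-invariance and the cocycle relation all pass to this limit.

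For part $(2)$, assume $\b_\Ss(s_1,s_2,s_3)=r$. I would first treat the case of a pairwise transverse triple, where continuity of $\b_\Ss$ on the open $G$-orbit lets me move the triple by $G$ into the Shilov boundary of the maximal polydisk; the polydisk formula then forces every summand $\b_\D(x_i,y_i,z_i)$ to equal $1$. In the disk, $\b_\D$ attains its maximum exactly on positively cyclically ordered triples of boundary points, so the triple lies in the Shilov boundary of the maximal polydisk, which by construction sits inside a maximal tube-type subdomain. The general case is handled by approximating $(s_1,s_2,s_3)$ radially by pairwise transverse triples and using that being contained in the Shilov boundary of a tube-type subdomain is a closed condition. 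For part $(3)$, since the triple already lies in the Shilov boundary of a tube-type subdomain $\Yy=G_T/K_T$, I would replace $\Xx$ by $\Yy$ and use the Cayley transform to identify $\Yy$ with a tube $V+i\Om$ over a Euclidean Jordan algebra $V$. A pairwise transverse pair in $\Ss_{G_T}$ can be conjugated into $(0,\infty)$, whose stabilizer acts on the remaining pairwise transverse points via the structure group of $V$ with complete invariant the signature $(p_+,p_-)$ of the corresponding Jordan algebra element; an explicit computation in the spirit of Kaneyuki identifies the Bergmann cocycle with $p_+-p_-$, so it separates $G$-orbits on pairwise transverse triples contained in tube-type subdomains.

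The main obstacle is the rigorous passage from $\Xx^{(3)}$ to the measurable extension on $\Ss_G^3$: because $\b_\Ss$ is only defined almost everywhere outside the open orbit, one must verify that the polydisk argument for triples in $\Xx^{(3)}$ transports faithfully through Clerc's radial-limit procedure, and that the property of lying in the Shilov boundary of a tube-type subdomain is closed under the corresponding approximations.
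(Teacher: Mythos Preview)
The paper itself does not prove this proposition but simply cites the literature: Clerc for (1), \cite[Proposition 5.6]{tight} for (2), and transitivity on maximal tube-type subdomains together with \cite[Theorem 5.2]{CN} for (3). You attempt an actual argument, but your main tool is false as stated. You claim that the Shilov boundary $(\partial\D)^r$ of a maximal polydisk meets every $G$-orbit of pairwise transverse triples in $\Ss_G^3$. This already fails for $G=\SU(1,2)$: here $r=1$, the maximal polydisk is a complex geodesic, and its Shilov boundary is a chain in $\partial\H^2_\C$. A triple of pairwise distinct points lies on a chain precisely when its Cartan invariant equals $\pm 1$, whereas the $G$-orbits of triples are parametrized by the full interval $[-1,1]$; the polydisk torus therefore meets only the two extremal orbits. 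Since your arguments for the bound in (1) and for the tube-type characterization in (2) both rest on this reduction, they do not go through in the non-tube-type case---which is exactly the case of interest for this paper. The cited proofs proceed differently: the bound is obtained by direct estimates (explicit diagonalization in the classical cases, Jordan-theoretic methods in general), and the characterization in \cite{tight} uses the Hermitian triple product rather than a polydisk reduction.

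Your sketch for (3) is essentially correct and close to how Clerc--Neeb argue; the polydisk-type diagonalization does work once one is inside a tube-type subdomain. You should, however, make explicit (as the paper does) that transitivity of $G$ on maximal tube-type subdomains is what allows the passage from $G_T$-orbits to $G$-orbits.
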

\begin{proof}
The first fact was proven in \cite{Clerc}, the second can be found in \cite[Proposition 5.6]{tight}, the third follows from the transitivity of the $G$-action on maximal tube type subdomains of $\Ss_G$ and \cite[Theorem 5.2]{CN}.
\end{proof}
We will call a triple $(s_1,s_2,s_3)$ in $\Ss_G^3$ satisfying $|\beta_\Ss(s_1,s_2,s_3)|=\rk (\Xx)$ a \emph{maximal} triple.

In the case where $G$ is $\SU(1,p)$, that is a finite cover of the connected component of the identity in $\Isom (\H^p_\C)$, the maximal tube-type subdomains are complex geodesics of $\H^p_\C$ and the Bergmann cocycle coincides with Cartan's angular invariant $c_p$ \cite[Section 7.1.4]{Goldman}. Following Cartan's notation we will denote by \emph{chains} the boundaries of the complex geodesics.

\subsection{Continuous (bounded) cohomology and maximal representations}
We  introduce now the concepts we will need about continuous and continuous bounded cohomology, standard references are respectively \cite{BW} and \cite{Mon}. A  quick introduction to the relevant aspects of continuous bounded cohomology can also be found in \cite{Formula}.

Throughout the section $G$ will be a locally compact second countable  group, every finitely generated group fits in this class when endowed with the discrete topology. The \emph{continuous cohomology} of $G$ with real coefficients, ${\rm H}^n_{\rm c}(G,\R)$ is the cohomology of the complex $({\rm C}^n_{\rm c}(G,\R)^G,{\rm d})$ where 
$${\rm C}^n_{\rm c}(G,\R)=\{f:G^{n+1}\to \R|\; f\text{ is a continuous function }\},$$
the invariants are taken with respect to the diagonal action,
and the differential ${\rm d}^n:{\rm C}_{\rm c}^{n}(G,\R)\to {\rm C}^{n+1}_{\rm c}(G,\R)$ is defined by the expression
$${\rm d}^nf(g_0,\ldots,g_{n+1})=\sum_{i=0}^{n+1}(-1)^if((g_0,\ldots,\hat g_i,\ldots, g_{n+1}).$$
Similarly the \emph{continuous bounded cohomology} ${\rm H}^n_{\rm cb}(G,\R)$ of $G$ is the cohomology  of the subcomplex $({\rm C}^n_{\rm cb}(G,\R)^G,{\rm d})$ of $({\rm C}_{\rm c}^n(G,\R)^G, {\rm d})$ consisting of bounded functions.
The inclusion $i:{\rm C}^n_{\rm cb}(G,\R)^G\to {\rm C}_{\rm c}^n(G,\R)^G$ induces, in cohomology,  the so-called \emph{comparison map} $c: {\rm H}^n_{\rm cb}(G,\R)\to{\rm H}^n_{\rm c}(G,\R)$. 
The Banach norm on the cochain modules ${\rm C}^n_{\rm cb}(G,\R)$ defined by 
$$\|f\|_\infty=\sup_{(g_0,\ldots, g_n)\in G^{n+1}}|f(g_0,\ldots,g_n)|$$
induces a seminorm on ${\rm H}^n_{\rm cb}(G,\R)$ that is usually referred to as the \emph{canonical seminorm} or \emph{Gromov's norm}.

Most of the results about continuous and continuous bounded cohomology are based on the functorial approach to the study of these cohomological theories that is classical in the case of continuous cohomology and was developed by  Burger and Monod \cite{BMJEMS} in the setting of continuous bounded cohomology. This allows to show that the cohomology of many different complexes realizes canonically the given cohomological theory. Since we will only need applications of this machinery that are already present in the literature we will not describe it any further here and we refer instead to \cite{BW, Mon} for details on this nice subject.

 A first notable application of this approach to continuous cohomology is van Est Theorem \cite{vanEst, Dupont} that realizes the continuous cohomology of a semisimple Lie group in terms of $G$-invariant differential forms on the associated symmetric space:
\begin{thm}[van Est]
Let $G$ be a semisimple Lie group without compact factors, then
$$ \Omega^n(\Xx,\R)^G\cong {\rm H}^n_{\rm c}(G,\R) .$$
Under this isomorphism the differential form $\omega$ corresponds to the class of the cocycle $c_\o$ defined by the formula
$$c_\o(g_0,\ldots,g_n)=\frac{1}{\pi}\int_{\Delta(g_0x,\ldots g_nx)}\o$$
for any fixed basepoint $x$ in $\Xx$.
\end{thm}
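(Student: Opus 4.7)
The plan is to invoke the functorial characterization of continuous cohomology: for a locally compact second countable group $G$, the group $\mathrm{H}^n_{\mathrm c}(G,\R)$ can be computed as the cohomology of the $G$-invariants of any strong resolution of the trivial module $\R$ by relatively injective continuous $G$-modules. I would produce such a resolution from the de~Rham complex of the symmetric space $\Xx=G/K$.

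First I would consider the complex $(\Omega^\bullet(\Xx),d)$ of smooth differential forms, equipped with the $G$-action by pullback under isometries. Since $\Xx$ is a simply connected symmetric space of noncompact type, the exponential map at any point $x_0\in\Xx$ is a diffeomorphism $T_{x_0}\Xx\to\Xx$, so $\Xx$ is contractible and the Poincar\'e lemma produces a resolution of $\R$; the radial contracting homotopy coming from geodesics emanating from $x_0$ can be chosen $K$-equivariantly, which gives strongness. Relative injectivity of each $\Omega^n(\Xx)$ follows from the identification $\Omega^n(\Xx)=C^\infty(G/K,\Lambda^n T^*_{x_0}\Xx)$, combined with a standard averaging argument over the compact group $K$. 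Next I would observe that on a symmetric space every $G$-invariant form is automatically closed: the geodesic symmetry $s_{x_0}$ acts as $(-1)^k$ on $k$-forms at $x_0$, so the $(n+1)$-form $d\omega|_{x_0}$ of an invariant $\omega$ is fixed by $s_{x_0}$ and hence vanishes. Putting these steps together gives $\mathrm{H}^n_{\mathrm c}(G,\R)\cong \mathrm{H}^n\bigl(\Omega^\bullet(\Xx)^G,d\bigr)=\Omega^n(\Xx)^G$.

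To identify the isomorphism with the explicit cocycle $c_\omega$, I would exhibit a $G$-equivariant chain map between the continuous homogeneous bar resolution $(\mathrm{C}^\bullet_{\mathrm c}(G,\R),d)$ and the de~Rham resolution $(\Omega^\bullet(\Xx),d)$ that lifts the identity on $\R$. The natural construction uses the orbit map $g\mapsto gx_0$: given a smooth simplex structure on $\Xx$ defined inductively by coning each tuple $(y_0,\ldots,y_n)$ over $y_0$ along the unique geodesic, one sends an invariant $n$-form $\omega$ to the cochain $(g_0,\ldots,g_n)\mapsto \frac{1}{\pi}\int_{\Delta(g_0x_0,\ldots,g_nx_0)}\omega$. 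Stokes' theorem turns $d$ on $\Omega^\bullet$ into the simplicial coboundary $\mathrm{d}^n$, so the above is a chain map and therefore realizes the functorial isomorphism at the level of cocycles; the constant $\frac{1}{\pi}$ is the normalization that matches the Riemannian scaling of the K\"ahler form chosen in the text.

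I expect the main obstacle to be the verification of the abstract hypotheses of the functorial machinery in this geometric incarnation, namely strongness and relative injectivity of the de~Rham modules, which require a careful choice of topology on $\Omega^n(\Xx)$ (Fr\'echet structure of uniform convergence on compacta of all derivatives) and a genuinely continuous $K$-equivariant homotopy. Once this is set up the algebraic identification with $\Omega^\bullet(\Xx)^G$ and the passage to the explicit simplicial cocycle are routine; in fact a clean alternative is to go through relative Lie algebra cohomology $\mathrm{H}^\bullet(\mathfrak{g},K;\R)$ and use that on a symmetric space of noncompact type it equals $\Omega^\bullet(\Xx)^G$, bypassing the closedness argument and concentrating all analytic work in the comparison with $\mathrm{H}^\bullet_{\mathrm c}(G,\R)$.
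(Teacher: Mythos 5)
The paper does not give a proof of this statement: the theorem is quoted as a classical result with citations to van Est's original 1953 paper and to Dupont's 1976 paper, so there is no ``paper's own proof'' to compare against. Your proposed route---compute $\mathrm{H}^\bullet_{\mathrm c}(G,\R)$ via a strong, relatively injective resolution, build that resolution from the de Rham complex of the contractible space $\Xx=G/K$, and realize the isomorphism on cocycles by integrating over geodesic simplices---is the standard modern approach and is sound in outline.

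One step is garbled and, as written, does not establish what you need. You say that for an invariant $\omega$ the $(n+1)$-form $d\omega|_{x_0}$ ``is fixed by $s_{x_0}$ and hence vanishes.'' Neither clause holds in general. On $(n+1)$-covectors at $x_0$ the symmetry $s_{x_0}$ multiplies by $(-1)^{n+1}$, so a fixed covector need not vanish when $n$ is odd; and $d\omega$ is in fact sent to $(-1)^n\,d\omega$, not to itself, because $s_{x_0}^*\omega=(-1)^n\omega$: the pull-back $s_{x_0}^*\omega$ is again $G$-invariant (conjugation by $s_{x_0}$ is the Cartan involution of $G$) and agrees with $(-1)^n\omega$ at $x_0$, hence everywhere by transitivity. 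The correct argument plays the two signs against one another: globally $s_{x_0}^*(d\omega)=(-1)^n\,d\omega$, while pointwise at $x_0$ the symmetry multiplies $(n+1)$-covectors by $(-1)^{n+1}$; the clash forces $(d\omega)_{x_0}=0$, and $G$-invariance of $d\omega$ then gives $d\omega=0$ everywhere. With that repair---or by routing through $\mathrm{H}^\bullet(\mathfrak g,K;\R)$ as you note at the end, where for a symmetric pair $[\mathfrak p,\mathfrak p]\subset\mathfrak k$ kills the Chevalley--Eilenberg differential on the cochain level---your sketch is a faithful blueprint of a complete proof.
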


Let us now focus more specifically on the second bounded cohomology of a Hermitian Lie group $G$.
By van Est isomorphism the module ${\rm H}^2_{\rm c}(G,\R)$ is isomorphic to the vector space of the $G$-invariant differential 2-forms on $\Xx$ which are generated, as a real vector space, by the K\"ahler classes of the irreducible factors of the symmetric space $\Xx$. The class corresponding via van Est isomorphism to the K\"ahler class $\o$ of $\Xx$ is represented by the cocycle $c_\o(g_0,g_1,g_2)=\b_\Xx(g_0x,g_1x,g_2x)$ where $x\in\Xx$ is any fixed point. 

It was proven in \cite{DT} for the irreducible classical domains and in \cite{CO} in the general case that the absolute value of the cocycle $c_\o$ is bounded by $\text{rk}(\Xx)$, hence the class $[c_\o]$ is in the image of the comparison map $c:{\rm H}^2_{\rm cb}(G,\R)\to {\rm H}^2_{\rm c}(G,\R)$.  
Moreover, if $G$ is a connected semisimple Lie group 
with finite center and without compact factors, the 
comparison map $c$ is injective (hence an 
isomorphism) 
in degree 2 \cite{BMJEMS}. We will denote by $\k^b_G$ the \emph{bounded 
K\"ahler class}, that is the class in ${\rm H}^2_{cb}(G,\R)$ satisfying $c(\k^b_G)=[c_\o]$. The Gromov norm of $\k^b_G$ can 
be computed explicitly:
\begin{thm}[\cite{DT,CO,tight}]\label{thm:gromovnorm}
 Let $G$ be a Hermitian Lie group with associated symmetric space $\Xx$ and let $\k^b_G$ be its bounded K\"ahler class. If $\|\cdot\|$ denotes the Gromov norm, then
 $$\|\k^b_G\|=\text{\emph{rk}}(\Xx).$$
\end{thm}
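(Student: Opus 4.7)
The plan is to establish the two inequalities separately, first the upper bound $\|\k^b_G\|\le \rk(\Xx)$ and then the lower bound $\|\k^b_G\|\ge \rk(\Xx)$.

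For the upper bound, I would exploit the functorial interpretation of continuous bounded cohomology. Since $\Ss_G=G/Q$, with $Q$ the stabilizer of a point of the Shilov boundary, is an amenable doubly ergodic $G$-space (a strong boundary in the sense of Burger--Monod), the second continuous bounded cohomology of $G$ is canonically and isometrically computed by the complex of bounded alternating $G$-invariant measurable cocycles on powers of $\Ss_G$. In this realization, the Bergmann cocycle $\b_\Ss:\Ss_G^3\to\R$ constructed by Clerc represents the bounded K\"ahler class $\k^b_G$ (this is exactly the passage from $c_\o(g_0,g_1,g_2)=\b_\Xx(g_0x,g_1x,g_2x)$, defined on $G^3$, to its boundary extension). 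By Proposition \ref{prop:bergmann}(1) the essential supremum of $\b_\Ss$ is at most $\rk(\Xx)$, which yields $\|\k^b_G\|\le\rk(\Xx)$.

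For the lower bound, I would invoke the Polydisk Theorem of Wolf and Korányi: any Hermitian symmetric space $\Xx$ of rank $r=\rk(\Xx)$ contains a totally geodesic and holomorphically embedded polydisk $\D^r$, which corresponds to a Lie group homomorphism $\Delta:\SU(1,1)^r\to G$. With the normalization chosen in the excerpt (minimal holomorphic sectional curvature equal to $-1$), each factor of the polydisk carries the standard K\"ahler form of curvature $-1$, so the K\"ahler form of $\Xx$ restricts to the polydisk as the sum of the K\"ahler forms of the factors. Passing to bounded cohomology this translates into the identity $\Delta^*\k^b_G=\sum_{i=1}^r p_i^*\k^b_{\SU(1,1)}$ where $p_i:\SU(1,1)^r\to\SU(1,1)$ is the $i$-th projection. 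Composing further with the diagonal embedding $\delta:\SU(1,1)\hookrightarrow\SU(1,1)^r$ gives $(\Delta\circ\delta)^*\k^b_G=r\,\k^b_{\SU(1,1)}$. Since pullback in bounded cohomology is norm non-increasing, I deduce
\[
\|\k^b_G\|\;\ge\;\|(\Delta\circ\delta)^*\k^b_G\|\;=\;r\,\|\k^b_{\SU(1,1)}\|.
\]
The rank one case is classical: in this normalization $\b_\Xx$ is the area of a geodesic triangle in $\H^1_\C$ divided by $\pi$, and the supremum of this area over ideal triangles equals $\pi$, so $\|\k^b_{\SU(1,1)}\|=1$. Combining the two inequalities gives $\|\k^b_G\|=\rk(\Xx)$.

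The main delicate point in this plan is the bookkeeping of the K\"ahler normalizations, namely checking that the restriction of $\o_\Xx$ to a maximal polydisk is exactly the sum of the K\"ahler forms of the $r$ hyperbolic disc factors, so that the pullback identity $(\Delta\circ\delta)^*\k^b_G=r\,\k^b_{\SU(1,1)}$ holds on the nose without spurious constants. All other ingredients (amenability of $Q$, identification of $\b_\Ss$ as a representative of $\k^b_G$, the Polydisk Theorem, and the rank one computation) are standard and appear in the cited references \cite{DT,CO,tight}.
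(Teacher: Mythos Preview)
The paper does not supply its own proof of this theorem: it is stated with attributions to \cite{DT,CO,tight} and used as a black box. Your plan is correct and aligns with those references. A minor simplification: for the upper bound you do not need the Shilov boundary model or amenability of $Q$; the paper already records (just before the theorem) that the cocycle $c_\o$ on $G^3$ satisfies $|c_\o|\le\rk(\Xx)$, which immediately gives $\|\k^b_G\|\le\rk(\Xx)$ by the definition of the Gromov seminorm. Your lower bound via the maximal polydisk and the diagonal $\SU(1,1)\hookrightarrow\SU(1,1)^r\hookrightarrow G$ is exactly the argument carried out in \cite{tight}, and your caveat about normalizations is the right one to flag.
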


Let now $M$ be a locally compact second countable topological group,  $G$ a Lie group of Hermitian type, $\rho:M\to G$ a continuous homomorphism. The precomposition with $\rho$ at the cochain level induces a pullback map in bounded cohomology $\rho_b^*:{\rm H}^2_{\rm cb}(G,\R)\to {\rm H}^2_{\rm cb}(M,\R)$ that is norm non increasing. 
\emph{Tight homomorphisms} were first defined in \cite{tight}, these are homomorphisms $\rho$  for which the pullback map is norm preserving, namely $\|\rho^*(\k_M^b)\|=\|\k_M^b\|$. In the same paper the following structure theorem is proven:
\begin{thm}[{\cite[Theorem 7.1]{tight}}]\label{thm:tight}
 Let $L$ be a locally compact second countable group, $\mathbf G$ a connected algebraic group defined over $\R$ such that $G=\mathbf G(\R)^\circ$ is of Hermitian type. Suppose that $\rho:L\to G$ is a continuous tight homomorphism. Then 
 \begin{enumerate}
  \item The Zariski closure $\mathbf H=\ov{\rho(L)}^Z$ is reductive.
  \item The group $H=\mathbf H(\R)^\circ$ almost splits as a product $H_{nc}H_c$ where $H_c$ is compact and $H_{nc}$ is of Hermitian type.
  \item If $\Yy$ is the symmetric space associated to $H_{nc}$, then the inclusion of $\Yy$ in $\Xx$ is totally geodesic and the Shilov boundary $\Ss_{H_{nc}}$ sits as a subspace of $\Ss_G$.
 \end{enumerate}
\end{thm}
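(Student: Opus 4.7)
The plan is to attack the three conclusions in order, using in an essential way that tightness forces the pullback of the bounded Kähler class of $G$ to realize its Gromov norm, which (by Theorem \ref{thm:gromovnorm}) equals $\rk(\Xx)$. The unifying idea is that this norm saturation behaves like an ``equality case'' statement and therefore imposes strong rigidity on the image.

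\emph{Step 1: Reductivity of $\mathbf H$.} Write $\mathbf H=\mathbf L\ltimes \mathbf R_u$ with $\mathbf R_u$ the unipotent radical and $\mathbf L$ a Levi subgroup, and let $\pi:\mathbf H\to\mathbf L$ be the corresponding projection. The idea is to show that $\pi\circ\rho$ is still tight, forcing $\mathbf R_u$ to be trivial by the Zariski density of $\rho(L)$ in $\mathbf H$. To get tightness of $\pi\circ\rho$, I would represent $\k^b_G$ by the Bergmann cocycle on the Shilov boundary and use that a $\rho$-equivariant measurable boundary map $\Ss_L\to\Ss_G$ (obtained from amenability of minimal parabolics) must factor through an $\mathbf L$-equivariant map, because $\mathbf R_u$ acts trivially on any algebraic quotient of flag type for $G$. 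The pullback of $\b_\Ss$ therefore only depends on the Levi component, so the norm equality transfers from $\rho$ to $\pi\circ\rho$. Zariski density then forces $\mathbf R_u=\{1\}$.

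\emph{Step 2: $H_{nc}$ is Hermitian.} Decompose $H=H_cH_{nc}$ according to compact and non-compact factors, and further split $H_{nc}$ into simple factors $H_{nc}=\prod_j H_j$. Since bounded cohomology vanishes on compact groups, $\rho^*_b\k_G^b$ lifts to a class supported on the non-compact part. By Künneth in degree two and the description of ${\rm H}^2_{\rm cb}$ of simple Lie groups, a class of norm $\rk(\Xx)$ on $H_{nc}$ can only be non-zero on those $H_j$ whose continuous bounded cohomology in degree two is non-trivial, which by a theorem of Burger--Monod occurs exactly for Hermitian simple groups. Any non-Hermitian factor would therefore be killed by $\rho^*_b$; combined with Zariski density, this forces every simple factor of $H_{nc}$ to be Hermitian.

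\emph{Step 3: Totally geodesic inclusion and Shilov embedding.} Let $\Yy$ be the symmetric space of $H_{nc}$ and pick a basepoint $y\in\Yy\subset\Xx$. The restriction $\omega_G|_{\Yy}$ is a closed $H_{nc}$-invariant two-form, hence by van Est's theorem it represents a class in ${\rm H}^2_{\rm c}(H_{nc},\R)$, a sum of Kähler classes of the factors of $\Yy$. The tightness condition translates, via the cocycle description of $\k^b_G$, into the equality of norms
\[
\left|\tfrac{1}{\pi}\int_{\Delta(h_0 y,h_1 y,h_2 y)}\omega_G\right|
= \rk(\Xx)
\]
being achieved along sequences converging to maximal triples in $\Ss_{H_{nc}}$. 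By Proposition \ref{prop:bergmann}, such equality holds only when the triangle lies in a tube-type subdomain, and a differential-geometric argument (e.g.\ Siu--Sampson-type Bochner computations or directly the equality case of Clerc--Ørsted) shows that $\Yy\hookrightarrow\Xx$ must be totally geodesic and holomorphic. The Shilov boundary inclusion $\Ss_{H_{nc}}\subset\Ss_G$ then follows because a totally geodesic holomorphic embedding of Hermitian symmetric spaces sends maximal polydiscs into maximal polydiscs and hence extends equivariantly at the Shilov level.

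The main obstacle is Step 3, extracting the totally geodesic conclusion from a purely cocycle-level equality of norms. The combinatorial/cohomological steps in (1) and (2) are relatively formal, but passing from an $L^\infty$ equality of cocycles to a smooth geometric statement about the inclusion of symmetric spaces requires genuine input about the equality case of the Domic--Toledo inequality and the structure of boundaries of tube-type subdomains, which is where the heaviest machinery from \cite{tight} would be needed.
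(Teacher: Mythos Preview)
The paper does not prove this theorem: it is stated as a citation to \cite[Theorem 7.1]{tight} (Burger--Iozzi--Wienhard), with no argument given in the present paper. So there is no ``paper's own proof'' to compare your proposal against.

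That said, a few remarks on your sketch relative to the actual proof in \cite{tight}. Your overall architecture (reductivity, then Hermitian type of the noncompact part, then geometric consequences for the embedding) matches theirs. However, Step~1 as written has a gap: the unipotent radical $\mathbf R_u$ of $\mathbf H$ has no reason to act trivially on $G/Q$, so your factorization of the boundary map is not justified. The argument in \cite{tight} instead shows directly that tightness is inherited by the Levi quotient using the structure of parabolic subgroups of $G$ and the behavior of the Bergmann cocycle under the associated horospherical projection; one then uses that the inclusion $\mathbf L\hookrightarrow G$ is already tight and Zariski-dense-in-$\mathbf H$ to kill $\mathbf R_u$. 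In Step~2, your logic is slightly inverted: a non-Hermitian simple factor $H_j$ has ${\rm H}^2_{\rm cb}(H_j,\R)=0$, so the restriction of $\rho^*_b\k^b_G$ to $H_j$ vanishes; what you then need is that this forces the norm of $\rho^*_b\k^b_G$ on $H_{nc}$ to be strictly smaller than $\rk(\Xx)$, which requires a quantitative Künneth-type estimate and the classification of tight positions of polydiscs --- this is not automatic from Zariski density alone. Finally, you are right that Step~3 is where the real work lies; in \cite{tight} it is handled not via Siu--Sampson but via the theory of \emph{tight embeddings} of symmetric spaces and a careful analysis of maximal triples, and the Shilov boundary inclusion is obtained from the equivariant harmonic map rather than from polydisc considerations.
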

In the case when also $L$ is an Hermitian Lie group, tight homomorphisms can be completely classified: in fact it is possible to prove that, if $L$ has no simple factor locally isomorphic to $\SU(1,1)$, then the map $\rho$ is equivariant with an holomorphic map (see \cite{Ham2} for the case when $L$ is simple, and \cite{HamP} for the general case) and in particular the classification of \cite{Ham1} applies. In our setting this implies the following;
\begin{thm}\label{thm:tightol}
 Let $i:L\to \SU(m,n)$ be a tight homomorphism, assume that no factor of $L$ is locally isomorphic to $\SU(1,1)$. Then each simple factor of $L$ is either isomorphic to $\SU(s,t)$ or is of tube type. Moreover if $L=L_t\times L_{nt}$ where $L_t$ is the product of all the irreducible factors of tube type, then there exists an orthogonal decomposition $\C^{m,n}=\C^{k,k}\oplus\C^{m-k,n-k}$ such that $L_{t}$ is included in $\SU(\C^{k,k})$ and $L_{nt}$ is included in $\SU(\C^{m-k,n-k})$. 
\end{thm}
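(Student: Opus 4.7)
The strategy is to reduce to Hamlet's classification of tight holomorphic embeddings and then run a linear-algebraic orthogonality argument. Since no simple factor of $L$ is locally isomorphic to $\SU(1,1)$, the result of \cite{HamP} (extending \cite{Ham2} from the simple to the semisimple setting) provides an $i$-equivariant totally geodesic holomorphic embedding $f:\Yy\to\Xx$ between the associated Hermitian symmetric spaces. Because $i$ is tight, so is $f$.

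Next, observe that the K\"ahler form on $\Yy=\prod_j\Yy_j$ is the sum of the K\"ahler forms on its irreducible factors, so the restriction of $f$ to each $\Yy_j$ is still tight and holomorphic. Hamlet's classification \cite{Ham1} of tight holomorphic embeddings of an irreducible Hermitian symmetric space into the symmetric space of $\SU(m,n)$ then forces each simple factor $L_0$ of $L$ to be either of tube type or isomorphic to some $\SU(s,t)$, which is the first assertion. For the decomposition, recall that maximal tube-type subdomains of $\Xx$ correspond to orthogonal splittings $\C^{m+n}=W\oplus W^{\perp}$ with $W$ of signature $(k,k)$, whose stabilizer is the block-diagonal subgroup $\SU(k,k)\times \SU(m-k,n-k)$. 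By the previous step each simple tube-type factor of $L_t$ embeds in the stabilizer of some signature-$(k_i,k_i)$ subspace $W_i\subset \C^{m+n}$.

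Since the simple factors of $L_t$ commute pairwise and each acts irreducibly on the linear support of its tight image, the subspaces $W_i$ must be pairwise orthogonal; hence $W:=\bigoplus_i W_i$ has signature $(k,k)$ with $k=\sum_i k_i$ and $i(L_t)\subset \SU(W)$. The factor $L_{nt}$ commutes with $L_t$, so it preserves $W^{\perp}$; applying the classification to its simple factors (each necessarily of type $\SU(s,t)$) places $i(L_{nt})\subset \SU(W^{\perp})\cong \SU(m-k,n-k)$, as desired. The main obstacle is precisely the orthogonality of the supports $W_i$: one must show that commuting tight holomorphic embeddings of simple Hermitian Lie groups into $\SU(m,n)$ act on mutually orthogonal subspaces of $\C^{m+n}$, which is a commutation-plus-irreducibility argument based on the fact that the image of a simple tight factor acts irreducibly on its support.
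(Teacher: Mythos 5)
The paper itself does not actually prove this theorem; it just cites \cite{HamP}. Your attempt is more ambitious, and the high-level structure (reduce to the holomorphic case via \cite{HamP}/\cite{Ham2}, invoke the simple-source classification of \cite{Ham1}, then try to separate commuting factors onto orthogonal subspaces) is the right skeleton. You also correctly isolate the main difficulty: showing that the supports of commuting tight simple factors are mutually orthogonal.

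However, the mechanism you propose for that step does not work. You assert that the image of a simple tight factor "acts irreducibly on its support," and then use Schur-type reasoning to force orthogonality. This irreducibility is false in general. For instance, compose the standard tight embedding $\Sp(4,\R)\hookrightarrow\SU(2,2)$ with the (tight) diagonal embedding $\SU(2,2)\hookrightarrow\SU(2,2)\times\SU(2,2)\hookrightarrow\SU(4,4)$. The resulting tight holomorphic embedding $\Sp(4,\R)\hookrightarrow\SU(4,4)$ has linear support all of $\C^{4,4}$, on which $\Sp(4,\R)$ acts as a direct sum of two copies of its four-dimensional representation — not irreducibly. More generally, any tight map pre- or post-composed with a diagonal produces a reducible support, so irreducibility cannot be the engine of the argument. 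What is actually needed is the statement that for a tight holomorphic embedding of a product $L_1\times L_2$ into $\SU(m,n)$ the representation on $\C^{m+n}$ contains no summand that is simultaneously non-trivial for $L_1$ and for $L_2$ (no genuine tensor factors), together with an argument that the resulting fixed-space complements are non-degenerate and mutually orthogonal. Neither is established by your commutation-plus-Schur reasoning. This is precisely the content one would have to import (or reprove) from \cite{HamP}, and as written your proof leaves it as a gap.
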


\begin{proof}
This can be found in \cite{HamP}.
\end{proof}

A key feature of bounded cohomology is that, whenever $\G$ is a lattice in $G$, it is possible to construct a left inverse ${\rm T}_{\rm b}^\bullet:{\rm H}^\bullet_{\rm b}(\G)\to{\rm H}^\bullet_{\rm cb}(G)$ of the restriction map. 
Indeed the bounded cohomology of $\G$ can be computed from the complex $({\rm C_{cb}^\bullet}(G,\R)^\G,{\rm d})$ and the transfer map ${\rm T}_{\rm b}^\bullet$ can be defined on the cochain level by the formula
$${\rm T}^k_{\rm b}(c(g_0,\ldots,g_k))=\int_{\G\backslash G}c(gg_0,\ldots,gg_k)\text{d}\mu(g)$$ where $\mu$ is the  measure on $\G\backslash G$ induced by the Haar measure of $G$ provided it is normalized to have total mass one.
It is worth remarking that when we consider instead continuous cohomology (without boundedness assumptions), a transfer map can be defined with the very same formula only for cocompact lattices, but the restriction map is in general not injective if the lattice is not cocompact.

Let us fix a representation $\rho:\G\to G$. Since ${\rm H}^2_{\rm cb}(\SU(1,p),\R)=\R\k^b_{\SU(1,p)}$, the class ${\rm T}^*_{\rm b}\rho^*(\k^b_G)$ is a scalar multiple of the K\"ahler class $\k^b_{\SU(1,p)}$ . The \emph{generalized Toledo invariant}\footnote{The original definition of the generalized Toledo invariant given in \cite{BIpreprint} used continuous cohomology only. However it is proven in  \cite[Lemma 5.3]{MW} that the invariant that was originally defined in \cite{BIpreprint}, $i_\rho$ in the notation of that article, and the invariant we defined here, that there was denoted by $\rm t_{ b}(\rho)$, coincide.} of the representation $\rho$ is the number $i_\rho$ such that ${\rm T}^*_{\rm b}\rho^*(\k^b_G)=  i_\rho\k^b_{\SU(1,p)}$. A consequence of Theorem \ref{thm:gromovnorm}, and the fact that the transfer map is norm non-increasing, is that $|i_\rho|\leq \text{rk}(\Xx)$. 
\begin{defn}
A representation $\rho$ is \emph{maximal} if  $|i_\rho|=\text{rk}(\Xx)$. Clearly maximal representation are in particular tight representations.
\end{defn}

The following lemma will be useful at the very end of the article, in the proof of Corollary \ref{cor:local rigidity}:

\begin{lem}\label{lem:Toledo constant}
 The generalized Toledo invariant is constant on connected components of the representation variety.
\end{lem}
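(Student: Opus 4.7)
The plan is to show that $\rho\mapsto i_\rho$ is continuous on $\Hom(\G,G)$ and then upgrade continuity to local constancy via a Stokes-type rigidity argument; this suffices, since an $\R$-valued locally constant function on a topological space is constant on each connected component.

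For continuity I would work throughout with an explicit bounded cocycle representative of $\k_G^b$ coming from the ambient symmetric space. Fix a basepoint $x\in\Xx_G$ and set
$$c_\o(g_0,g_1,g_2)=\b_{\Xx_G}(g_0x,g_1x,g_2x),$$
a continuous $G$-invariant cocycle which, by Theorem \ref{thm:gromovnorm} and the properties of the Bergmann cocycle, is uniformly bounded by $\rk(\Xx_G)$ and represents $\k_G^b$. The pullback $\rho^*c_\o(\g_0,\g_1,\g_2)=c_\o(\rho(\g_0),\rho(\g_1),\rho(\g_2))$ then depends continuously on $\rho$ in the topology of pointwise convergence, thanks to the joint continuity of $\b_{\Xx_G}$ on $\Xx_G^3$. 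Applying the transfer map at the cochain level, which is integration against the finite probability measure on $\G\backslash\SU(1,p)$, dominated convergence (using the uniform bound $\rk(\Xx_G)$) gives continuity of $\mathrm{T}_{\rm b}^2\rho^*c_\o$ in $\rho$. Since ${\rm H}^2_{\rm cb}(\SU(1,p),\R)\cong\R\k_{\SU(1,p)}^b$ is one-dimensional, extracting the scalar coefficient produces the desired continuous map $\rho\mapsto i_\rho$.

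To promote continuity to local constancy I would use the van Est realization of $i_\rho$ as a K\"ahler integral. Since the comparison map ${\rm H}^2_{\rm cb}(\SU(1,p),\R)\to {\rm H}^2_{\rm c}(\SU(1,p),\R)$ is injective in degree two (Burger--Monod), $i_\rho$ is already determined in continuous cohomology; applying van Est expresses it, up to a fixed positive normalization, as
$$i_\rho = C\int_{\G\backslash\H^p_\C}f_\rho^*\o_G,$$
where $f_\rho:\H^p_\C\to\Xx_G$ is a smooth $\rho$-equivariant map. Choosing a smoothly-varying family $f_{\rho_t}$ along a continuous path $\rho_t$ in $\Hom(\G,G)$, Cartan's magic formula combined with $d\o_G=0$ shows that $\tfrac{d}{dt}f_{\rho_t}^*\o_G = d(\iota_{V_t}f_{\rho_t}^*\o_G)$ is an exact 2-form, so Stokes' theorem forces $\tfrac{d}{dt}i_{\rho_t}$ to vanish.

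The main obstacle is the non-uniform case: when $\G\backslash\H^p_\C$ is noncompact, Stokes' theorem requires controlled behaviour near the cusps. This is exactly where the bounded-cochain model earns its keep: working inside the isometric $L^\infty$-complex computing ${\rm H}^\bullet_{\rm cb}(\G,\R)$, the pulled-back form and its primitive remain uniformly bounded along the deformation, and integration by parts on the finite-volume quotient $\G\backslash\H^p_\C$ against a bounded exact form still yields zero. Hence $\tfrac{d}{dt}i_{\rho_t}=0$ even without cocompactness, so $i_\rho$ is locally constant, and therefore constant on each connected component of $\Hom(\G,G)$.
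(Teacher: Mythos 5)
The paper's own ``proof'' is a citation to \cite[Page~4]{BICartan}, so you are attempting to supply an argument the paper takes for granted; this is a different route from the paper, and unfortunately it has real gaps.

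First, a concrete error: for $p>1$ the formula $i_\rho = C\int_{\G\backslash\H^p_\C}f_\rho^*\o_G$ is dimensionally ill-posed, since $f_\rho^*\o_G$ is a $2$-form on a $2p$-dimensional quotient. The correct van Est realization pairs $\rho^*\k^b_G$ against the K\"ahler form of $\H^p_\C$ and produces an integral of $f_\rho^*\o_G\wedge\o_{\H^p_\C}^{p-1}$; this is fixable but not cosmetic, because the wedge factor changes which decay estimates are needed.

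Second, and more seriously, the treatment of the non-uniform case is where the whole difficulty lives, and the argument you give there does not hold up. ``The pulled-back form and its primitive remain uniformly bounded'' does not follow from anything you have set up: the primitive $\iota_{V_t}(f_{\rho_t}^*\o_G\wedge\o_{\H^p_\C}^{p-1})$ involves the first variation $V_t$ of the equivariant family $f_{\rho_t}$, and there is no a priori reason a smooth family of $\rho_t$-equivariant maps can be chosen with $V_t$ bounded near the cusps; moreover, one also needs integrability of the top form before an exhaustion-by-compacts Stokes argument can even begin. Appealing to the ``isometric $L^\infty$-complex computing $\mathrm{H}^\bullet_{\rm cb}(\G,\R)$'' does not help here: that complex consists of $L^\infty$ functions on powers of $\deH^p$, not differential forms on $\H^p_\C$, so it gives no control on the boundary terms in Stokes' theorem. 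The whole reason Burger and Iozzi introduced the bounded-cohomology transfer (rather than transfer in continuous cohomology plus van Est) is precisely that the Stokes/compact-support arguments break down for non-cocompact lattices; your argument essentially reintroduces the problem the bounded formalism was designed to avoid.

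Finally, a smaller point: in the continuity half, extracting the scalar $i_\rho$ from the cocycle $\mathrm{T}^2_{\rm b}\rho^*c_\o$ is not automatically a continuous operation, because passing from a bounded cocycle to its cohomology class quotients by a space of coboundaries that is not finite-dimensional; you would need to exhibit a continuous functional that computes the class (for instance by restricting to the Shilov boundary where the invariant alternating cocycle is essentially unique). This is repairable, but it is not free, and as written the step is a gap. Taken together the plan has the right flavour but is not a proof; the delicate points at the cusps and in the cohomology-class extraction are exactly where the cited result of Burger--Iozzi does real work.
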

\begin{proof}
This is proven in \cite[Page 4]{BICartan}.
\end{proof}
\subsection{Boundary maps and maximal representations}
The existence of measurable boundary maps for Zariski dense homomorphisms in algebraic groups is by now classical:
\begin{prop}\cite[Proposition 7.2]{SUpq}\label{prop:boundary map}
 Let $\G$ be a lattice in $\SU(1,p)$, $G$ a Lie group of Hermitian type and let $\rho:\G\to G$ be a Zariski dense representation. Then there exists a $\rho$-equivariant measurable map $\phi:\deH^p\to\Ss_G$  such that, for almost every pair of points $x, y$ in $\deH^p$, $\phi(x)$ and $\phi(y)$ are transverse.
\end{prop}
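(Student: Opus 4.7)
The plan is to obtain $\phi$ via Furstenberg's amenability machinery and then promote the resulting map into probability measures to a map into $\Ss_G$ by exploiting Zariski density. First, note that the stabilizer $P < \SU(1,p)$ of a point of $\deH^p$ is a minimal parabolic, hence amenable (it is solvable), so $\deH^p \cong \SU(1,p)/P$ is an amenable $\SU(1,p)$-space and therefore also an amenable $\G$-space for any lattice $\G < \SU(1,p)$. Standard amenability arguments (cf.\ \cite{Mon}) applied to the $G$-action on the compact metric space $\Ss_G$ yield a $\rho$-equivariant measurable map
$$\phi_0 : \deH^p \longrightarrow \Mm^1(\Ss_G),$$
where $\Mm^1(\Ss_G)$ denotes the space of probability measures on the Shilov boundary.

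The main step is to show that $\phi_0$ takes values almost surely in the Dirac measures, hence descends to a measurable map $\phi : \deH^p \to \Ss_G$. For this I would invoke the following algebraic fact: since $\Ss_G = G/Q$ for a maximal parabolic subgroup $Q$, the stabilizer in $G$ of any probability measure $\mu \in \Mm^1(G/Q)$ that is not supported on a proper algebraic subvariety is contained in a proper algebraic subgroup of $G$ (this is Furstenberg's lemma in the form used in the proof of Margulis' superrigidity). The $\G$-action on $\deH^p$ is ergodic (even doubly ergodic) with respect to the Lebesgue measure class, so the cardinality of the support of $\phi_0(x)$ is essentially constant; combined with Zariski density of $\rho(\G)$ and the fact that stabilizers of non-Dirac measures are properly algebraic, one concludes that $\phi_0(x)$ is a Dirac mass for almost every $x$.

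Finally, to produce transversality, I would use the double ergodicity of the diagonal $\G$-action on $\deH^p \times \deH^p$, which follows from the Howe--Moore mixing property for $\SU(1,p)$ together with Kaimanovich's theorem realising $\deH^p$ as the Poisson boundary of $\G$. The pairs of transverse points form the unique open $G$-orbit in $\Ss_G \times \Ss_G$, whose complement $\mathcal{D}$ is a proper Zariski closed subset. If $(\phi \times \phi)_*$ of the product measure class gave positive mass to $\mathcal{D}$, then by $\G$-invariance and ergodicity the essential image of $\phi \times \phi$ would sit inside $\mathcal{D}$, forcing the essential image of $\phi$ itself into a proper algebraic subvariety of $\Ss_G$, contradicting Zariski density of $\rho(\G)$.

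The main obstacle is the promotion from measures to points, where Zariski density is indispensable: without it one can only ensure that $\phi_0$ lands in measures whose stabilizer is large, and nothing further can be said. Once Dirac-ness is established, transversality is a comparatively soft consequence of ergodicity plus the structure of $G$-orbits on $\Ss_G^2$.
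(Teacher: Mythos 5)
The paper does not prove this statement; it cites it as \cite[Proposition 7.2]{SUpq} and uses it as a black box. Your reconstruction follows the standard Furstenberg--Margulis--Zimmer route, and two of the three steps are sound in outline. The amenability step (existence of a $\rho$-equivariant $\phi_0:\deH^p\to \Mm^1(\Ss_G)$) is correct. The transversality step is also essentially right, though it should be phrased via a Fubini slicing: if $\phi\times\phi$ essentially lands in the non-transverse locus, then for a.e.\ $x$ the essential image of $\phi$ lies in the proper Zariski-closed fiber $\{z\in\Ss_G : z\not\tra\phi(x)\}$, which contradicts Zariski density of the essential image of $\phi$ (this is the content of Proposition~\ref{prop:phiZariskidense}, and is what one needs, rather than Zariski density of $\rho(\G)$ directly). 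Double ergodicity of $\G$ on $(\deH^p)^2$ comes from Moore's ergodicity theorem applied to the open $\SU(1,p)$-orbit; Kaimanovich's identification of the Poisson boundary is not required.

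The middle step---promoting $\phi_0$ to a map into $\Ss_G$---has a genuine gap. The lemma you invoke, that the stabilizer of a non-Dirac measure is contained in a proper algebraic subgroup, is both too weak and fails to distinguish: the stabilizer of a Dirac mass $\delta_s$ is the maximal parabolic $\Stab_G(s)$, which is also a proper algebraic subgroup, and in fact \emph{every} probability measure on $G/Q$ has proper (algebraic) stabilizer since $G$ is not amenable. Likewise, ``the cardinality of the support of $\phi_0(x)$ is essentially constant'' is not a well-behaved measurable invariant for measures of infinite support, and even in the atomic case it does not by itself yield a map to points. The correct version of Furstenberg's lemma says that a probability measure on $G/Q$ with \emph{non-compact} stabilizer must charge a proper algebraic subvariety; since $\delta_s$ is itself supported on such a subvariety, this alone cannot isolate Dirac masses. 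To make the argument work one needs additional structure: for instance, use Zimmer's theorem that the $G$-action on $\Mm^1(\Ss_G)$ is smooth so that, by (double) ergodicity, $\phi_0$ (or $\phi_0\times\phi_0$) lands essentially in a single $G$-orbit $G/H$ with $H$ algebraic, and then analyze $H$ to show it is contained in a conjugate of $Q$; or, alternatively, realize the support of $\phi_0(x)$ as a measurably varying finite family of proper subvarieties and iterate. As written, the passage from measures to points does not go through.
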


Let us now fix a measurable map $\phi:\deH^p\to\Ss_{G}$, and define the \emph{essential Zariski closure} of $\phi$ to be the minimal Zariski closed subset $V$ of $\Ss_{G}$ such that $\mu(\phi^{-1}(V))=1$.
Such a set exists since the intersection of finitely many closed subset of full measure has full measure and  $\Ss_{G}$ is an algebraic variety, in particular it is Noetherian.
We will say that a measurable boundary map $\phi$ is \emph{Zariski dense} if its essential Zariski closure is the whole $\Ss_{G}$.

\begin{prop}\label{prop:phiZariskidense}
 Let $\rho$ be a Zariski dense representation, then $\phi$ is Zariski dense.
\end{prop}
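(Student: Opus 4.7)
The plan is to identify the essential Zariski closure $V$ of $\phi$, show it is $\rho(\G)$-invariant, upgrade the invariance to the whole $G$ via Zariski density of $\rho$, and then conclude using transitivity of $G$ on $\Ss_G$.

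First, $V$ is well defined and characterised as the unique minimal Zariski closed subset of $\Ss_G$ whose $\phi$-preimage has full measure: the algebraic variety $\Ss_G$ is Noetherian, so any descending chain of closed subsets with full-measure preimage stabilises, and the intersection of the resulting finite collection is itself such a set.

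Second, I would show that $V$ is $\rho(\G)$-invariant. Fix $\gamma\in\G$. By $\rho$-equivariance of $\phi$,
$$\phi^{-1}(\rho(\gamma)V)=\gamma^{-1}\phi^{-1}(V).$$
The $\G$-action on $\partial\H^p_\C$ is the restriction of the $\SU(1,p)$-action on its Furstenberg boundary and in particular preserves the Lebesgue measure class, so $\gamma^{-1}$ sends full-measure sets to full-measure sets. Hence $\rho(\gamma)V$ is a Zariski closed subset of $\Ss_G$ whose $\phi$-preimage has full measure, and the minimality of $V$ forces $V\subseteq\rho(\gamma)V$. Applying the same argument to $\gamma^{-1}$ gives the reverse inclusion, so $\rho(\gamma)V=V$.

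Third, the transporter $\Stab_G(V)=\{g\in G:\,gV=V\}$ is a Zariski closed subgroup of $G$: this is a standard consequence of the fact that the $G$-action on $\Ss_G$ is morphic and that $V$ is Zariski closed. By the previous step, $\Stab_G(V)\supseteq\rho(\G)$, and the Zariski density hypothesis upgrades this to $\Stab_G(V)=G$; thus $V$ is $G$-invariant. Finally, $\Ss_G$ is a single $G$-orbit: if $G$ is simple then $\Ss_G=G/P$ for a maximal parabolic $P$, and in the reducible case $\Ss_G=\prod\Ss_{G_i}$ is the product of the transitive orbits of the simple factors, hence still a single orbit of $G=\prod G_i$. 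Since $\phi$ has values in $\Ss_G$, the set $V$ is nonempty, so $G$-invariance forces $V=\Ss_G$, which is the Zariski density of $\phi$.

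The only mildly delicate step is the $\rho(\G)$-invariance of $V$, which requires combining measurable equivariance of $\phi$, quasi-invariance of the boundary measure class under $\G$, and the minimality characterisation of the essential Zariski closure; all remaining steps are formal consequences of algebraic group actions together with the structure of the Shilov boundary.
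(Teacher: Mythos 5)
Your proof is correct and takes essentially the same approach as the paper: both establish that the essential Zariski closure $V$ is $\rho(\G)$-invariant via its minimality and the fact that $\G$ preserves the measure class, and both then invoke Zariski density of $\rho(\G)$. The endgame differs only cosmetically — the paper pulls $V$ back to a proper Zariski closed subset $W\subset G$ and derives a contradiction from $\rho(\G)\subseteq Wg^{-1}$ for $g\in W$, whereas you pass through the Zariski-closed transporter $\Stab_G(V)$ and transitivity of $G$ on $\Ss_G$; both steps are immediate consequences of Zariski density.
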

\begin{proof}
 Indeed let us assume by contradiction that the essential Zariski closure of $\phi(\deH^p)$ is a proper Zariski closed subset $V$ of $\Ss_G$. The set $V$ is $\rho(\G)$-invariant: indeed for every element $\gamma$ in $\Gamma$, we get $\mu(\phi^{-1}(\rho(\gamma)V))=\mu(\gamma\phi^{-1}(V))=1$, hence, in particular,  $\rho(\g)V=V$ by minimality of $V$. 
 
 Let us now recall that the Shilov boundary $\Ss_G$ is an homogeneous space for $G$, and let us fix the preimage $W$ of $V$ under the projection map $G\to G/Q=\Ss_G$. $W$ is a proper Zariski closed subset of $G$, moreover if $g$ is any element in $W$, the Zariski dense subgroup $\rho(\G)$ of $G$ is contained in $Wg^{-1}$ and this gives a contradiction.
\end{proof}

One of the advantages of bounded cohomology when proving rigidity statements is that the bounded cohomology of a group can be computed from a suitable boundary of the group itself, for example when $\G$ is a lattice in $\SU(1,p)$, the complex $({\rm L}^\infty_{\text{alt}}((\deH^p)^\bullet,\R)^\G,d)$ realizes isometrically the bounded cohomology of $\G$  \cite{BMGAFA}. Moreover, exploiting functoriality properties of bounded cohomology, one can implement the pullback via a measurable boundary map provided by Proposition \ref{prop:boundary map} thus getting the following result:

\begin{prop}[{\cite[Theorem 2.41]{Formula}} ]\label{prop:Formula}
Let $\G$ be a lattice in $\SU(1,p)$ and let $G$ be a Hermitian Lie group. Let $\rho:\G\to G$ be a representation, $\beta_\Ss:( \Ss_G)^{3}\to \R$ the Bergmann cocycle and  $\phi:\deH^p\to G/Q$ be a measurable $\rho$-equivariant boundary map.  For almost every triple $(x,y,z)$ in $\deH^p$, the formula
$$i_\rho c_p(x,y,z)=\int_{\G\backslash \SU(1,p)}\beta_\Ss(\phi(gx),\phi(gy),\phi(gz)) {\rm d}\mu(g) $$ holds.
\end{prop}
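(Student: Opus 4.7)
The strategy is to realize every ingredient of the formula as an explicit cochain in a suitable boundary resolution of bounded cohomology, implement each of the three operations (representation of $\k_G^b$, pullback by $\rho$, transfer to $\SU(1,p)$) at the cochain level, and finally upgrade the resulting cohomological identity to a pointwise identity almost everywhere.

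First, by the functorial machinery of Burger--Monod, the bounded cohomology of a Hermitian Lie group $G$ is isometrically computed by the complex $(L^\infty_{\text{alt}}(\Ss_G^{\bullet+1})^G, d)$. In this resolution the bounded K\"ahler class $\k_G^b$ is represented by the Bergmann cocycle $\beta_\Ss$: indeed $\beta_\Ss$ is $G$-invariant, alternating, a.e.~a strict cocycle, and bounded by $\rk\Xx$, while by van Est and Proposition \ref{prop:bergmann} it extends the cocycle $c_\o$ that represents $\k_G^b$ in the continuous model. Analogously, $\text{H}^2_{\text{b}}(\G,\R)$ is computed by $(L^\infty_{\text{alt}}((\deH^p)^{\bullet+1})^\G,d)$ and $\text{H}^2_{\text{cb}}(\SU(1,p),\R)$ by $(L^\infty_{\text{alt}}((\deH^p)^{\bullet+1})^{\SU(1,p)},d)$, with $\k^b_{\SU(1,p)}$ represented by Cartan's angular invariant $c_p$.

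Second, I would implement the pullback $\rho^\ast$ via the boundary map $\phi$. Because $\phi$ is measurable, $\rho$-equivariant, and sends a.e.~pair to a transverse pair (Proposition \ref{prop:boundary map}), the composition
\begin{equation*}
(x,y,z)\longmapsto \beta_\Ss(\phi(x),\phi(y),\phi(z))
\end{equation*}
is a well defined $\G$-invariant, alternating, bounded measurable cocycle on $(\deH^p)^3$. The standard functoriality argument (the analogue for $L^\infty$ resolutions of the fact that any two strong resolutions compute the same cohomology) shows that this cocycle represents precisely $\rho^\ast \k_G^b$ in the boundary model of $\text{H}^2_{\text{b}}(\G,\R)$.

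Third, I would implement the transfer map by integration: for every $c\in L^\infty_{\text{alt}}((\deH^p)^{\bullet+1})^\G$ the formula
\begin{equation*}
\text{T}^\bullet_{\text{b}}c(x_0,\dots,x_\bullet)=\int_{\G\backslash\SU(1,p)} c(g x_0,\dots,g x_\bullet)\,\text{d}\mu(g)
\end{equation*}
defines an $\SU(1,p)$-invariant, alternating, bounded measurable cochain (well defined because $\G\backslash\SU(1,p)$ has finite Haar measure), and is easily checked to commute with $d$; it therefore realizes $\text{T}^\bullet_{\text{b}}$ in the boundary resolutions. Applying this to the cocycle of the previous step produces the right hand side of the formula, which is an alternating, strict, $\SU(1,p)$-invariant, $L^\infty$ cocycle on $(\deH^p)^3$ representing $\text{T}^\ast_{\text{b}}\rho^\ast \k_G^b = i_\rho\,\k^b_{\SU(1,p)}$ by definition of the generalized Toledo invariant.

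The main obstacle, and the last step, is to deduce pointwise equality a.e.~from equality in bounded cohomology. By the previous paragraphs the difference between $i_\rho c_p$ and the integral on the right hand side is an exact cocycle in $L^\infty_{\text{alt}}((\deH^p)^{3})^{\SU(1,p)}$. Both are \emph{strict} Borel cocycles (i.e.~satisfy the cocycle identity pointwise on the full measure set of triples of pairwise transverse points where all terms are defined), so one is reduced to showing that any exact strict $\SU(1,p)$-invariant alternating cocycle is zero almost everywhere. This is the content of the implementation lemma used in \cite{Formula}: it is proven by exhibiting the primitive as an alternating $\SU(1,p)$-invariant $L^\infty$ function of two variables, restricting to a fixed chain where $\SU(1,p)$ acts transitively on triples with prescribed Bergmann (Cartan) invariant, and using alternation to force the primitive to vanish a.e.
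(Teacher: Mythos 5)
The paper does not prove this proposition itself: it cites it as \cite[Theorem 2.41]{Formula} and relies on the Burger--Iozzi machinery developed there. Your sketch captures the broad outline of that machinery (represent classes by boundary cocycles, implement pullback and transfer at cochain level, then upgrade a cohomological identity to an a.e.\ pointwise one), but there is one genuine conceptual gap and one confused passage at the end.

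The gap is in your first step. You assert that $\mathrm{H}^\bullet_{\rm cb}(G,\R)$ is isometrically computed by the complex $(L^\infty_{\rm alt}(\Ss_G^{\bullet+1})^G, d)$. This is false whenever $G$ has higher rank, which is precisely the interesting case here. The Shilov boundary $\Ss_G=G/Q$ is the quotient by a \emph{maximal} parabolic $Q$, whose Levi component (e.g.\ $\GL_m(\C)\times\SU(n-m)$ for $\SU(m,n)$) is non-amenable for $m>1$; hence $G/Q$ is not an amenable $G$-space and the complex above is not a strong resolution by relatively injective modules. What the Burger--Iozzi machinery actually provides (see \cite{BIAppendix} and \cite{Formula}) is a \emph{canonical comparison map} from the cohomology of $(L^\infty_{\rm alt}(\Ss_G^{\bullet+1})^G,d)$ to $\mathrm{H}^\bullet_{\rm cb}(G,\R)$, not an isomorphism, together with the fact that $\beta_\Ss$ maps to $\k^b_G$ under it and that pulling back strict alternating Borel cocycles along a measurable $\rho$-equivariant $\phi:\deH^p\to G/Q$ implements $\rho^*$ at the level of the (genuinely resolving, since $\G$ acts amenably on $\deH^p$) complex $L^\infty_{\rm alt}((\deH^p)^{\bullet+1})^\G$. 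Your phrase ``the standard functoriality argument'' hides exactly this nontrivial point; as written, it would also purport to prove that every $G$-invariant bounded cocycle on $\Ss_G$ computes a class, which is more than is true.

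The last paragraph is also off. To conclude that the cohomological identity $\mathrm{T}^*_{\rm b}\rho^*\k^b_G=i_\rho\,\k^b_{\SU(1,p)}$ gives an a.e.\ pointwise equality of cocycles on $(\deH^p)^3$, the correct (and much simpler) argument is: the difference of the two cocycles is a coboundary $db$ of some $b\in L^\infty_{\rm alt}((\deH^p)^2)^{\SU(1,p)}$, and since $\SU(1,p)$ acts transitively on pairs of distinct points of $\deH^p$, any invariant $L^\infty$ function on $(\deH^p)^2$ is essentially constant, hence vanishes a.e.\ by alternation; so the degree-one term of the resolution is trivial and the cocycles agree a.e. Your description (``restricting to a fixed chain where $\SU(1,p)$ acts transitively on triples with prescribed Bergmann invariant'') mixes up the degree in which the primitive lives and appeals to structure on triples, which is irrelevant to killing a degree-one primitive.
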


We will now show that, since $\beta_\Ss$ is a strict $G$-invariant cocycle and $\SU(1,p)$ acts transitively on pairs of distinct points of $\deH^p$, the equality holds for every triple of pairwise distinct points (this is an adaptation in our context of an argument due to Bucher: cfr. the proof of \cite[Proposition 3]{Mostow} in case $n=3$). 
\begin{lem}\label{lem:everytriple}
The equality in Proposition \ref{prop:Formula}  holds for every triple $(x,y,z)$ of pairwise distinct points.
\end{lem}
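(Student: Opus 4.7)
The plan is to use the strict cocycle property of $\beta_\Ss$ (Proposition~\ref{prop:bergmann}(i)) together with the a.e.\ equality from Proposition~\ref{prop:Formula} via a ``four-point'' argument: any triple can be compared, using the cocycle identity, with three triples in which one vertex is allowed to vary freely, so we can force these three auxiliary triples into the measure-one set on which Proposition~\ref{prop:Formula} already gives the conclusion.

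More precisely, fix a pairwise distinct triple $(x,y,z)$ in $\deH^p$. Let $E\subset(\deH^p)^3$ be the full-measure set on which the identity of Proposition~\ref{prop:Formula} holds. By Fubini, the set of $w\in\deH^p$ such that all three of the triples $(w,y,z)$, $(w,x,z)$, $(w,x,y)$ lie in $E$ has full measure in $\deH^p$; removing the three points $\{x,y,z\}$ we can choose such a $w$ which is moreover distinct from each of $x,y,z$. Because $\beta_\Ss$ is a \emph{strict} cocycle on $\Ss_G^3$, for every choice of four points $s_0,s_1,s_2,s_3\in\Ss_G$ we have
\[
\beta_\Ss(s_1,s_2,s_3)-\beta_\Ss(s_0,s_2,s_3)+\beta_\Ss(s_0,s_1,s_3)-\beta_\Ss(s_0,s_1,s_2)=0.
\]
Applied with $(s_0,s_1,s_2,s_3)=(\phi(gw),\phi(gx),\phi(gy),\phi(gz))$, this holds for every $g\in \SU(1,p)$ in the full-measure set on which $\phi(gw),\phi(gx),\phi(gy),\phi(gz)$ are all defined.

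Integrating the resulting identity against the normalized Haar measure on $\G\backslash \SU(1,p)$ gives
\[
F(x,y,z)=F(w,y,z)-F(w,x,z)+F(w,x,y),
\]
where $F(a,b,c):=\int_{\G\backslash \SU(1,p)}\beta_\Ss(\phi(ga),\phi(gb),\phi(gc))\,{\rm d}\mu(g)$; the integrand is bounded by $\rk(\Xx)$ by Proposition~\ref{prop:bergmann}(i), so every integral converges. By the choice of $w$ each of the three terms on the right coincides with $i_\rho$ times the corresponding value of the Cartan invariant $c_p$, and since $c_p$ is itself a (strict) cocycle on triples of pairwise distinct points of $\deH^p$ we conclude
\[
F(x,y,z)=i_\rho\bigl[c_p(w,y,z)-c_p(w,x,z)+c_p(w,x,y)\bigr]=i_\rho\, c_p(x,y,z),
\]
which is the required equality at the arbitrary distinct triple $(x,y,z)$.

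The only step requiring any real care is the simultaneous choice of $w$: one must know that for each fixed distinct pair $(y,z)$ the slice $\{w:(w,y,z)\in E\}$ has full measure, and likewise for the two other slices. This is a standard application of Fubini once one has the measurable product structure of $(\deH^p)^3$ and the fact that $E$ is conull; the strictness of $\beta_\Ss$ (so that the four-point cocycle identity holds pointwise, not merely almost everywhere) is what allows one to avoid a further measure-theoretic subtlety when integrating the identity in $g$.
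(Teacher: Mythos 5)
Your overall strategy is the same as the paper's: use the four-term cocycle identity for $\beta_\Ss$ (and for $c_p$) at a well-chosen auxiliary fourth point $w$ and then integrate. However, there is a genuine gap in the key measure-theoretic step, and it is precisely the step you flag as "requiring any real care" and then dismiss as "a standard application of Fubini."

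Fubini only yields that for \emph{almost every} pair $(y,z)$ the slice $\{w:(w,y,z)\in E\}$ is conull. It does \emph{not} yield this for the particular pairs $(y,z)$, $(x,z)$, $(x,y)$ built from your \emph{arbitrary} fixed triple: a priori all three of these pairs could lie in the measure-zero exceptional set of Fubini's theorem, in which case the required $w$ need not exist. This is exactly the subtlety the paper addresses. Its resolution is to first observe that the identity of Proposition~\ref{prop:Formula} is an equality between two \emph{$\SU(1,p)$-invariant} strict cocycles (the left side because $c_p$ is, the right side because $\phi$ is $\rho$-equivariant and $\beta_\Ss$ is $G$-invariant), so the full-measure set $\Oo$ on which it holds can be taken $\SU(1,p)$-invariant. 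Then one fixes a single "good" pair $(y_1,y_2)$ whose slice $\Ww$ is conull, and uses transitivity of $\SU(1,p)$ on pairs of distinct points in $\deH^p$ to write each $(x_i,x_{i+1})=g_i(y_1,y_2)$; invariance of $\Oo$ guarantees that $g_i\Ww$ is exactly the slice over $(x_i,x_{i+1})$, so a common auxiliary point can be found in $g_0\Ww\cap g_1\Ww\cap g_2\Ww$. Your sketch never uses the invariance of $\Oo$ nor the transitivity of $\SU(1,p)$, and without them the choice of $w$ cannot be made for an arbitrary triple. The rest of your argument (strict cocycle identity pointwise, integration, and the cocycle relation for $c_p$) matches the paper's once this step is repaired.
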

\begin{proof}
The formula of Proposition \ref{prop:Formula} is an equality between $\SU(1,p)$-invariant strict cocycles: 
clearly this is true for the left-hand side, moreover the expression on the right-hand side is a strict cocycle since $\beta_\Ss$ is, and is $\SU(1,p)$ invariant since $\phi$ is $\rho$-equivariant and $\beta_\Ss$ is $G$-invariant.

Let us now fix a $\SU(1,p)$-invariant full measure set $\Oo\subseteq (\deH^p)^{3}$ on which the equality holds. 
Since $\Oo$ is of full measure, an application of Fubini's Theorem is that for almost every pair $(y_1,y_2)\in(\deH^p)^2$ the set of points $z\in\deH^p$ such that $(y_1,y_2,z)\in\Oo$ is of full measure. Let us fix a pair $(y_1,y_2)$ for which this holds and denote by $\Ww$ the set of points $z$ such that $(y_1,y_2,z)\in\Oo$. 

Let us now fix a triple $(x_1,x_2,x_3)$ of points in $\deH^p$.
Since the $\SU(1,p)$ action on $\deH^p$ is transitive on pairs of distinct points, for every $i$ there exist an element $g_i$ such that $(x_i,x_{i+1})=(g_iy_1,g_iy_2)$. 
Let us now fix a point $x_3$ in the full measure set $g_1\Ww\cap g_2\Ww\cap g_3\Ww$. Since $x_3$ is in $g_i\Ww$, we get that $g_i^{-1}x_3\in\Ww$, and hence $(x_i,x_{i+1},x_3)=g_i(y_1,y_2,g_i^{-1}{x_3})\in \Oo$.
 In particular, computing the cocycle identity on the 4tuple $(x_0,x_1,x_2,x_3)$ we get that the identity of  Proposition \ref{prop:Formula} holds for the triple $(x_0,x_1,x_2)$.

\end{proof}

\begin{cor}\label{cor:incidence preserved}
 Let $\rho:\G\to G$ be a maximal representation and let $\phi:\deH^p\to\Ss_{G}$ be a $\rho$-equivariant measurable boundary map. Then for almost every maximal triple $(x,y,z)\in (\deH^p)^3$, the triple $(\phi(x),\phi(y),\phi(z))$ is contained in the Shilov boundary of a tube-type subdomain and is a maximal triple. 
\end{cor}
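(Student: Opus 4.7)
Since $\rho$ is maximal, $i_\rho = \varepsilon\,\rk\Xx$ with $\varepsilon \in \{-1,+1\}$. Substituting into the identity of Lemma \ref{lem:everytriple} gives, for every triple of pairwise distinct points $(x,y,z) \in (\deH^p)^3$,
\begin{equation*}
\varepsilon\,\rk\Xx\cdot c_p(x,y,z) \;=\; \int_{\G\backslash \SU(1,p)} \beta_\Ss\bigl(\phi(gx),\phi(gy),\phi(gz)\bigr) \, d\mu(g);
\end{equation*}
note that the integrand descends to $\G\backslash\SU(1,p)$ because $\phi$ is $\rho$-equivariant and $\beta_\Ss$ is $G$-invariant.

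Fix a base maximal triple $(x_0,y_0,z_0)$ with $c_p(x_0,y_0,z_0) = \varepsilon$; then the integral equals $\rk\Xx$. Proposition \ref{prop:bergmann}(1) bounds the integrand pointwise by $\rk\Xx$, and $\mu$ is a probability measure, so the integrand must equal $\rk\Xx$ for $\mu$-almost every $g$. For each such $g$, Proposition \ref{prop:bergmann}(2) then implies that $\bigl(\phi(gx_0),\phi(gy_0),\phi(gz_0)\bigr)$ is a maximal triple contained in the Shilov boundary of a tube-type subdomain.

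To promote this to a statement about almost every maximal triple, I use that $c_p$ is a complete $\SU(1,p)$-invariant on triples of pairwise distinct points, so the set of maximal triples splits into exactly two $\SU(1,p)$-orbits indexed by the sign of $c_p$. The orbit map $g \mapsto (gx_0,gy_0,gz_0)$ identifies one of these orbits with $\SU(1,p)/\Stab(x_0,y_0,z_0)$, pushing Haar measure to a measure in the natural class on the orbit viewed as a smooth submanifold of $(\deH^p)^3$. Consequently, the $\mu$-null exceptional set produced above transports to a null set of maximal triples with $c_p = \varepsilon$. The remaining orbit is handled either by repeating the argument with a base triple of the opposite Cartan sign, or by observing that both $\beta_\Ss$ and $c_p$ are alternating so that swapping $y_0$ and $z_0$ reverses the sign of both sides while preserving the conclusion. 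The only step that genuinely requires care is this last measure-theoretic transfer from Haar-almost-every $g$ to almost-every maximal triple, but it is routine once each orbit is realized as a homogeneous space carrying its Haar-pushforward measure in the natural smooth class.
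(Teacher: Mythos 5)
Your proof is correct and follows essentially the same route as the paper's: apply the identity of Lemma \ref{lem:everytriple} to a fixed maximal triple, use the sup-norm bound $\|\beta_\Ss\|_\infty=\rk\Xx$ and the fact that $\mu$ is a probability measure to force the integrand to be extremal for a.e.\ $g$, invoke Proposition \ref{prop:bergmann} to conclude those image triples lie in a tube-type Shilov boundary and are maximal, and then transfer ``a.e.\ $g$'' to ``a.e.\ maximal triple'' via the orbit map. The only (minor) difference is that you track the sign $\varepsilon=\pm 1$ of $i_\rho$ and treat the two $\SU(1,p)$-orbits of maximal triples symmetrically, whereas the paper tacitly takes $i_\rho=\rk\Xx$ and handles the negatively oriented triples by a closing remark; this is a cosmetic improvement, not a different argument.
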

\begin{proof}
Let us fix a positively oriented triple $(x,y,z)$ of points on a chain. We know from Lemma \ref{lem:everytriple} that the equality  
$$\int_{\SU(1,p)/\G}\beta_\Ss(\phi(gx),\phi(gy),\phi(gz)) dg=\text{rk}(\Xx)$$
holds:
since $\rho$ is maximal, then $i_\rho={\rm rk} (\Xx)$, and since $(x,y,z)$ are on a chain then $c_p(x,y,z)=1$.

Since $\|\beta_\Ss\|_\infty={\rm rk}(\Xx)$,
it follows that $\beta_\Ss(\phi(gx),\phi(gy),\phi(gz))=\text{rk}(\Xx)$ for almost every $g$ in $\SU(1,p)$. By Proposition \ref{prop:bergmann}, this implies that for almost every $g\in \SU(1,p)$, the triple $(\phi(gx),\phi(gy),\phi(gz))$ is contained in the boundary of a tube-type subdomain. Since maximal triples in $\deH^p$ form  an $\SU(1,p)$-orbit,  the fact that the result holds for almost every element $g$ implies that the result holds for almost every triple of positively oriented points  in a chain. The same argument applies for negatively oriented triples.
\end{proof}

\begin{defn} A measurable map $\phi$ \emph{preserves the chain geometry} if, for almost every pair $x,y$ in $\deH^p$, the images $\phi(x),\phi(y)$ are transverse subspaces and, for  almost every maximal triple $(x,y,z)\in (\deH^p)^3$, the triple $(\phi(x),\phi(y),\phi(z))$ is maximal.
\end{defn}
This amounts to saying that the map $\phi$ induces an almost everywhere defined morphism $(\phi,\hat \phi)$ from the geometry $\deH^p\times\Cc$ whose points are points in $\deH^p$ and whose lines are the chains, to the geometry $\Ss_G\times \Tt$ whose points are points in $\Ss_{G}$ and whose lines are the Shilov boundaries of maximal tube-type subdomains of $\Ss_{G}$. The morphism $(\phi,\hat\phi)$ has the property that it preserves the incidence structure almost everywhere. 

Purpose of the next sections is to show that a measurable Zariski dense map $\phi:\deH^p\to \Ss_{\SU(m,n)}$ that preserves the chain geometry coincides almost everywhere with an algebraic map.

\section{The Chain geometry of $\Ss_{m,n}$}\label{sec:S_mn}

For the rest of the paper we will restrict our attention to the Hermitian Lie group $\SU(m,n)$ consisting of complex matrices that preserve a non-degenerate Hermitian form of signature $(m,n)$ with $m\leq n$, and denote, for the sake of brevity, by $\Ss_{m,n}$ the Shilov boundary of $\SU(m,n)$ that was previously denoted by $\Ss_{\SU(m,n)}$. The purpose of this section is to understand some features of the incidence structure of the subsets of $\Ss_{m,n}$ that arise as Shilov boundaries of the maximal tube-type subdomains of the symmetric space associated to $\SU(m,n)$. For reasons that will be explained later we will call $m$-\emph{chains} such subsets. The main tool that we will introduce in our investigations is a projection map $\pi_x$, depending on the choice of a point $x\in \Ss_{m,n}$. The map $\pi_x$ associates to a point $y$ that is transverse to $x$ the uniquely determined $m$-chain that contains both $x$ and $y$. The central results of the section are Proposition \ref{prop:lifts of k-vertical 
chains} and Proposition \ref{prop:errors}.

\subsection{A model for $\Ss_{m,n}$.}\label{sec:S_mn1}
Throughout the paper we will realize $\SU(m,n)$ as the subgroup of $\SL(m+n,\C)$ that preserves the Hermitian form $h$ represented with respect to the standard basis by the matrix $h=\bsm0&0&\Id_m\\0&-\Id_{n-m}&0\\\Id_m&0&0\esm$. 
We will denote by $\Gr_m(\C^{m,n})$ the Grassmannian of $m$-dimensional subspaces of $\C^{m+n}$.
It is well known that the Shilov boundary $\Ss_{m,n}$ can be realized as the subset of $\Gr_m(\C^{m,n})$ consisting of subspaces that are isotropic for the form $h$. Both $\Ss_{m,n}$ and the action of $\SU(m,n)$ on it are real algebraic\footnote{More details can be found in \cite{Pthesis} and in \cite{SUpq} where it is also possible to find the description of a complex variety $\mathbf{\Ss_{m,n}}$ such that $\Ss_{m,n}=\mathbf{\Ss_{m,n}}(\R)$}.

It is classical and easy to verify that the unique open $\SU(m,n)$-orbit  on $\Ss_{m,n}^2$ consists of pairs of points whose underlying vector spaces are \emph{transverse}. In particular we will often identify a point $x\in \Ss_{m,n}$ with its underlying vectorspace, and we will use the notation $x\tra y$, that would be more suited for the linear setting, with the meaning that the pair $(x,y)$ is a pair of transverse points.
We will use the notation $\Ss_{m,n}^{(2)}$ for the set of pairs of transverse points, and we will denote the set of points in $\Ss_{m,n}$ that are transverse to a given point $x$ by
$$\Ss_{m,n}^x:=\{y\in\Ss_{m,n}|\;y\tra x\}.$$
If $x$ and $y$ are transverse point, the linear span $\<x,y\>$ is a $2m$ dimensional subspace $V$ of $\C^{m+n}$ on which $h$ has signature $(m,m)$. The Shilov boundary of the maximal tube type subdomain containing $x$ and $y$ is the set
$$\Ss_{V}=\{z\in \Gr_m(V)|\;h|_z=0\}=\Ss_{m,n}\cap \Gr_m(V).$$
\begin{defn}
An $m$-\emph{chain} in $\Ss_{m,n}$ is a subspaces of the form $\Ss_V$ for some linear subspace $V$ of $\C^{m,n}$ on which $h$ restricts to an Hermitian form of signature $(m,m)$. 
\end{defn}
Clearly any pair of transverse points $x,y$ in $\Ss_{m,n}$ uniquely determines a $m$-chain $\Ss_{\langle x,y\rangle}$ with the property that both $x$ and $y$ belong to $\Ss_{\langle x,y\rangle}$. We will denote by $T_{x,y}$ such a chain.

In the case $m=1$, that is $\Xx_{m,n}=\H^n_\C$, the 1-chains are boundaries of complex geodesics or \emph{chains} in Cartan's terminology. This is the reason why we chose to call the Shilov boundaries of maximal tube-type subdomains \emph{$m$-chains}. To be more consistent with Cartan's notation, we omit the 1, and simply call \emph{chains} the $1$-chains.
\subsection{The Heisenberg model $\Hh^{m,n}(x)$.}\label{ssec:Heisenberg}
We now want to give a model for the Zariski open subset of $\Ss_{m,n}$ consisting of points transverse to a given point $x$. The model we are introducing is sometimes referred to as (the boundary of) a Siegel domain of genus two and was studied, for example, by Koranyi and Wolf in \cite{KorWo}. In the case $m=1$ this model is described in \cite[Chapter 4]{Goldman} but our conventions here will be slightly different.

In the rest of the paper, for each complex matrix $X$ we will denote by $X^T$ the transpose of $X$, by $X^*=\ov X^T$ the transpose conjugate of $X$. If, moreover, $X$ is invertible we will denote by $X^{-1}$ the inverse of $X$ and by $X^{-*}$ the inverse of $X^*$. Moreover we will indicate a point $V$ in the Grassmannian $\Gr_m(\C^{m+n})$ with a $(n+m)\cdot m$ dimensional matrix: we will understand such a matrix as an ordered basis of the subspace $V$. Clearly two matrices $X,Y$ represent the same element in $\Gr_m(\C^{m+n})$ if and only if there exists a matrix $G\in \GL_m(\C)$ such that $X=YG$. A direct computation gives that a point $x\in \Gr_m(\C^{m+n})$ represented by the matrix $\bsm X_1\\X_2\\X_3\esm$ belongs to $\Ss_{m,n}$ if and only if $X_1^*X_3+X_3^*X_1-X_2^*X_2=0$ where $X_1$ and $X_3$ have $m$ rows and $X_2$ has $n-m$ rows.

Let us focus on the maximal isotropic subspace  
$$v_\infty=\<e_i|\;1\leq i\leq m\>=\bsm\Id_m\\0\\0\esm\in \Ss_{m,n}.$$ 
The set of points $\Ss_{m,n}^{v_\infty}$ that are transverse to $v_\infty$ admit a basis of the form $\bsm X\\Y\\\Id_m\esm$ with $X^*+X-Y^*Y=0$. We will identify such a set with the linear space $M((n-m)\times m,\C)\times \fru(m)$ where $\fru(m)$ is the set of antiHermitian matrices. We use the symbol $\Hh^{m,n}(v_\infty)$ for such a linear space, that will be understood as  parametrizing $\Ss_{m,n}^{v_\infty}$ via the map
$$\begin{array}{ccccc}
  \Hh^{m,n}(v_\infty)=& M((n-m)\times m,\C)\times \fru(m)&\to &\Ss_{m,n}^{v_\infty}\\
   &(X,Y)&\mapsto &\bsm Y+X^*X/2\\X\\\Id_m\esm.
  \end{array}
$$
We refer to $\Hh^{m,n}(v_\infty)$ as the Heisenberg model. This is because, as we will now see, $\Hh^{m,n}(v_\infty)$ identifies with the generalized Heisenberg group that is the nilpotent radical of the stabilizer of $v_\infty$.

Let us denote by $Q$ the maximal parabolic subgroup of $\SU(\C^{m+n}, h)$ that is the stabilizer of $v_\infty$. It is easy to verify that
$$Q=\left\{\bpm A&B&E\\0&C&F\\0&0&A^{-*}\epm\hspace{-10pt}\begin{array}{l}_m\\_{n-m}\\_{m}                                                                                                                                                                                               \end{array} \left| 
\begin{array}{l} A\in \GL_m(\C),\, C\in \U(n-m),\\A^{-1}B-F^*C=0 \\E^*A^{-*}+A^{-1}E-F^*F=0\\\det C\det A\det A^{-*}=1\end{array} \right\}. \right. $$

The group $ Q$ can be written as $L\ltimes N$ where 
$$ L=\left\{ \left.\bpm A&0&0\\0&C&0\\0&0&A^{-*}\epm \right| \begin{array}{l}A\in \GL_m(\C), \\C\in \U(n-m)\\\det C\det A\det A^{-*}=1\end{array}\right\}$$
 is reductive and 
$$N=\left\{ \left.\bpm \Id&E^*&F\\0&\Id&E\\0&0&\Id\epm\right| F^*+F-E^*E=0\right\}$$
is  nilpotent.

 The group $L$ is the stabilizer of the two transverse points $v_\infty$ and $v_0=\<e_{n+1},\ldots,e_{n+m}\>$ of $\Ss_{m,n}$. If we denote by $a$ the determinant of the matrix $A$, an explicit isomorphism between $\GL_m(\C)\times \SU(n-m)$ and $L$ is given by:
 $$\begin{array}{ccc}
  \GL_m(\C)\times SU(n-m)&\to &L\\
(A,B)&\mapsto &\bsm A&0&0\\0&\ov aa^{-1}B&0\\0&0&A^{-*}\esm.
  \end{array}
$$
Similarly the 2 step nilpotent group $N$ can be identified with $ M(n\times m,\C)\ltimes \fru(m)$:
  $$\begin{array}{ccc}
    M((n-m)\times m,\C)\ltimes \fru(m)&\to &N\\
(E,F)&\mapsto &\bsm \Id&E^*&F+EE^*/2\\0&\Id&E\\0&0&\Id\esm.
  \end{array}
$$

It is particularly easy to describe  the action of $L$ and $N$ on $\Hh^{m,n}(v_\infty)$: the group $N$ acts by left multiplication according to the group structure on  $ M(n\times m,\C)\ltimes \fru(m)$
$$(E,F)\cdot(X,Y)=\left(E+X,F+Y+\frac{E^*X-X^*E}2\right)$$
and $L$ acts via right-left matrix multiplication on the first factor and conjugation on the second:
$$(A,B)\cdot(X,Y)=(\ov a a^{-1}BXA^{*},AYA^*).$$

\subsection{The projection $\pi_x$}\label{ssec:projpx}
We consider the projection on the first factor $\pi_{v_\infty}:\Hh^{m,n}(v_\infty)\to M((n-m)\times m,\C)$. Under the natural identification $\Hh^{m,n}(v_\infty)\cong N$, this projection corresponds to the group homomorphism whose kernel is the center $\fru(m)$ of $N$. Purpose of this section is to give a geometric interpretation of the quotient space $M((n-m)\times m,\C)$: it corresponds to a parametrization of the space of $m$-chains through the point $v_\infty$. 
In order to make this statement more precise let us consider the set 
$$ W_{v_\infty}=\{V\in \Gr_{2m}(\C^{m+n})|\; v_\infty< V,\; h|_{V} \text{ has signature } (m,m) \}.$$
The following lemma gives an explicit identification of $ W_{v_\infty}$ with the quotient space $M((n-m)\times m,\C)=N/_{\fru(m)}$:
\begin{lem}\label{lem:vchains}
 There exists a bijection between $M((n-m)\times m,\C)$  and $ W_{v_\infty}$  defined by the formula 
 \barr i:M((n-m)\times m,\C)&\to& W_{v_\infty}\\
 A&\mapsto&\bpm A^*\\\Id\\0\epm^\bot.
 \earr
 \end{lem}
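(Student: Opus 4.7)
The map $i$ is a map from an affine space to a set of subspaces, and the proof will consist of three verifications: (i) $i(A)$ indeed lies in $W_{v_\infty}$ for every $A$, (ii) $i$ is injective, (iii) $i$ is surjective. All three are elementary linear algebra given the explicit form of $h$, but the cleanest strategy is to exploit the $N$-action from Section~\ref{ssec:Heisenberg} to avoid computing signatures directly.

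For (i), first observe that the columns of $\bsm A^*\\ \Id\\ 0\esm$ are linearly independent (the $\Id$-block already has rank $n-m$), so $i(A)$ is a well-defined $2m$-dimensional subspace. To check that $v_\infty\subset i(A)$, I would compute $v_\infty^* h\bsm A^*\\ \Id\\ 0\esm$ directly and observe that it vanishes; this just uses the explicit form of $h$ and the fact that the bottom block of $v_\infty$ is $0$. To check the signature condition, I would use the element
\[
n_A=\bpm \Id & A^* & AA^*/2\\ 0 & \Id & A\\ 0 & 0 & \Id\epm\in N
\]
from the Heisenberg parametrization, which sends $\bsm 0\\ \Id\\ 0\esm$ to $\bsm A^*\\ \Id\\ 0\esm$, hence sends $i(0)$ to $i(A)$ (since $n_A$, being in $\SU(m,n)$, preserves $h$-perpendicularity). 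Thus it suffices to verify the signature claim for $A=0$: there $i(0)$ is the span of the first $m$ and the last $m$ standard basis vectors, and the restriction of $h$ is the standard split form $\bsm 0 & \Id_m\\ \Id_m & 0\esm$, of signature $(m,m)$.

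For (ii), if $i(A)=i(B)$, their $h$-perpendiculars coincide as $(n-m)$-dimensional subspaces; but the column spans of $\bsm A^*\\ \Id\\ 0\esm$ and $\bsm B^*\\ \Id\\ 0\esm$ can agree only if these two matrices differ by a right multiplication by $G\in\GL_{n-m}(\C)$, and the identity block in the middle forces $G=\Id$, whence $A=B$.

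For (iii), which is the heart of the argument, let $V\in W_{v_\infty}$. Since $V$ has signature $(m,m)$, the restriction of $h$ to $V$ is non-degenerate, so $V\cap V^\perp=0$. In particular, as $v_\infty\subset V$ is isotropic and therefore $v_\infty\subset v_\infty^\perp$, I get $V^\perp\subset v_\infty^\perp$ and $V^\perp\cap v_\infty=0$. A short computation with the explicit form of $h$ shows that $v_\infty^\perp=\{w:w_3=0\}$, and the condition $V^\perp\cap v_\infty=0$ says exactly that the projection of $V^\perp$ onto the middle block $\C^{n-m}$ is injective. A dimension count ($\dim V^\perp=n-m$) makes it an isomorphism, so $V^\perp$ admits a unique basis whose middle block is $\Id_{n-m}$ and whose bottom block is $0$; the top block is then the required $A^*$. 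Hence $V=i(A)$, completing the proof.

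The main potential obstacle is purely bookkeeping: keeping the block decomposition consistent and correctly tracking which side of $h$ is conjugated. There is no genuine geometric difficulty here — the lemma is essentially a statement that the nilradical $N$ acts simply transitively on $W_{v_\infty}$ modulo its center, which is the conceptual content one recovers a posteriori.
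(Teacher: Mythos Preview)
Your proof is correct, modulo a typo: the top-right block of $n_A$ should be $A^*A/2$, not $AA^*/2$ (the latter has the wrong dimensions since $A$ is $(n-m)\times m$; with $A^*A/2$ the condition $F^*+F=E^*E$ for membership in $N$ is satisfied and the computation $n_A\bsm 0\\\Id\\0\esm=\bsm A^*\\\Id\\0\esm$ goes through).

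The paper's proof is briefer and argues only surjectivity, leaving well-definedness and injectivity implicit. For the key step---why the middle block of a basis of $V^\perp$ can be normalized to $\Id$---the paper observes that $h|_{V^\perp}$ is negative definite (signature count: $h|_V$ is $(m,m)$ and $h$ is $(m,n)$, so $h|_{V^\perp}$ is $(0,n-m)$), hence the middle block $B$ must be invertible, since otherwise $V^\perp$ would contain a nonzero vector of $v_\infty$, which is isotropic. Your argument via $V^\perp\cap v_\infty=0$ from non-degeneracy of $h|_V$ is equivalent and arguably more direct, since it bypasses any signature computation on $V^\perp$. Your use of the $N$-action to verify (i) is a clean addition that the paper omits entirely.
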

\begin{proof}
Let $V$ be a point in $W_{v_\infty}$. Then $V^\bot$ is a $(n-m)$ dimensional subspace of $\C^{m+n}$ that is contained in $v_\infty^\bot$. This implies that $V^\bot$ admits a basis of the form $\bpm A&B&0\epm^T$ where $A$ has $m$ rows and $n-m$ columns and $B$ is a square $n-m$ dimensional matrix. Since the restriction of $ h$ on $V$ has signature $(m,m)$, the restriction of $ h$ to $V^\bot$ is negative definite, in particular the matrix $B$ must be invertible. This implies that, up to changing the basis of $V^\bot$, we can assume that $B=\Id_{n-m}$. This gives the desired bijection. 
\end{proof}

$W_{v_\infty}$ parametrizes the $m$-chains containing the point $v_\infty$. We will call them \emph{vertical} chains: the intersection $T^{v_\infty}$ of  a vertical chain $T$ with the Heisenberg model $\Hh_{m,n}(v_\infty)$ consists precisely of a fiber of the projection on the first factor in the Heisenberg model: 

\begin{lem}\label{lem:vertical chains}
 Let $T\subset  \Ss_{m,n}$ be a vertical chain and $V$ be its associated linear subspace. If we denote by  $p_T$ in $M((n-m)\times m,\C)$, the point $p_T=i^{-1}(V)$, we have:
 \begin{enumerate}
 \item for every $x$ in $T^{v_\infty}$, then $\pi_{v_\infty}(x)=p_T$,
 \item $T^{v_\infty}=\pi_{v_\infty}^{-1}(p_T)$,
 \item the center $ M$ of $N$ acts simply transitively on $T^{v_\infty}$.
 \end{enumerate}
 \end{lem}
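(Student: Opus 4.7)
The plan is to work entirely in the Heisenberg coordinates on $\Ss_{m,n}^{v_\infty}$ introduced in Section \ref{ssec:Heisenberg}, combined with the explicit parametrization $i$ of $W_{v_\infty}$ supplied by Lemma \ref{lem:vchains}. By definition the vertical chain $T$ equals $\Ss_V$ for $V = i(p_T) = \bsm p_T^* \\ \Id \\ 0 \esm^\bot$, and $T^{v_\infty}$ consists of exactly those points of $\Ss_{m,n}^{v_\infty}$ whose underlying $m$-plane is contained in $V$.

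The main step is to translate the inclusion $x \subset V$ into a matrix identity. A point $x = (X,Y) \in \Hh^{m,n}(v_\infty)$ has basis $\bsm Y + X^*X/2 \\ X \\ \Id_m \esm$, and I would check $x \subset V$ by pairing this basis against $\bsm p_T^* \\ \Id \\ 0 \esm$ with respect to $h$. A short block computation, using the explicit form of $h$, collapses the whole condition to the single equation $X = p_T$. From this, (1) and (2) are immediate: on the one hand $\pi_{v_\infty}(x) = X$ by definition, so $x \in T^{v_\infty}$ forces $\pi_{v_\infty}(x) = p_T$; conversely, for any $Y \in \fru(m)$ the pair $(p_T, Y)$ automatically sits in $\Ss_{m,n}^{v_\infty}$ because the isotropy requirement $Y + Y^* = 0$ is built into $\fru(m)$, and the same computation places it in $V$. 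Hence $T^{v_\infty} = \pi_{v_\infty}^{-1}(p_T) = \{p_T\} \times \fru(m)$.

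For (3) I would read off the center of $N$ directly from the semidirect product description $N \cong M((n-m) \times m, \C) \ltimes \fru(m)$: the subgroup $M = \{0\} \times \fru(m)$ is central, and the action formula $(E,F) \cdot (X, Y) = (E + X,\, F + Y + (E^*X - X^*E)/2)$ recalled in Section \ref{ssec:Heisenberg} specializes on $M$ to $(0, F) \cdot (X, Y) = (X, F + Y)$. Thus $M$ preserves every fiber of $\pi_{v_\infty}$, and on the fiber $T^{v_\infty} = \{p_T\} \times \fru(m)$ it acts by translation in the second coordinate, which is tautologically simply transitive.

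There is essentially no serious obstacle here; the only care required is bookkeeping in the block computation for $h$ (in particular tracking the minus sign from the middle $-\Id_{n-m}$ block) and verifying that the identification $M \cong \fru(m)$ matches the $Y$-coordinate of the Heisenberg parametrization.
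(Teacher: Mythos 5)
Your proposal is correct and follows essentially the same route as the paper: both proofs work in the Heisenberg coordinates, test membership $x\subset V$ by pairing the Heisenberg basis of $x$ against the explicit basis $\bsm p_T^*\\\Id\\0\esm$ of $V^\bot$ via $h$, and read off the single condition $\pi_{v_\infty}(x)=p_T$, from which (3) follows since $M$ translates the $\fru(m)$-coordinate. The block computation you describe (yielding $p_T^*-X^*=0$) matches the paper's, and your observation that $M$ acts by vertical translation on the fiber is exactly the paper's closing remark.
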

\begin{proof}
 (1) An element $w$ of $\Hh_{m,n}(v_\infty)$ with basis  $\bpm X&Y& \Id_m\epm^T$  belongs to the chain $T$ if and only if $Y=p_T$: indeed the requirement that $V^\bot$ is contained in $w^\bot$ restates as
 $$0=\bpm X^*&Y^*&\Id_m\epm \bpm0&0&\Id\\0&-\Id&0\\\Id&0&0\epm\bpm p_T^*\\\Id\\0\epm=p_T^*-Y^*.$$
 This implies that for every $w\in T$, we have $\pi_{v_\infty}(w)=p_T$.
 
 Viceversa if $\pi_{v_\infty}(w)=p_T$, then $w$ is contained in $V$ and this proves (2).
 
 (3) The fact that $M$ acts simply transitively on $T^{v_\infty}$ is now obvious: indeed $M$ acts on the Heisenberg model by vertical translation stabilizing every vertical chain.
\end{proof}

The stabilizer $Q$ of $v_\infty$ naturally acts on the space $W_{v_\infty}$ and it is easy to deduce explicit formulae for this action from the formulae given in Section \ref{ssec:Heisenberg}.
In the sequel, when this will not cause confusion, we will identify $W_{v_\infty}$ with $M((n-m)\times m,\C)$ considering implicit the map $i^{-1}$. 
It is worth remarking that everything we did so far doesn't really depend on the choice of the point $v_\infty$, and a map $\pi_x: \Ss_{m,n}^{\; x}\to W_x$ can be defined for every point $x\in \Ss_{m,n}$. We decided to stick to the point $v_\infty$, since the formulae in the explicit expressions are easier.
\subsection{Projections of chains}\label{ssec:otherchains}

We now want to understand what are the possible images under the map $\pi_{v_\infty}$ of other chains. We define the \emph{intersection index} of an $m$-chain $T$ with a point $x\in \Ss_{m,n}$ by
$$i_{x}(T)=\dim (x\cap V_T)$$
where $V_T$ is the $2m$ dimensional linear subspace of $\C^{m+n}$ associated to $T$.
Clearly $0\leq i_{v_\infty}(T)\leq m$, and $i_{v_{\infty}}(T)=m$ if and only if the chain $T$ is vertical. In general we will call \emph{$k$-vertical} a chain whose intersection index is $k$: with this notation  vertical chains are $m$-vertical. Sometimes we will call \emph{horizontal} the chains that are 0-vertical (in particular each point in the chain is transverse to $v_\infty$).

In our investigations it will be precious to be able to relate different situations via the action of the group $ G=\SU(\C^{m+n}, h)$, under this respect the following lemma will be fundamental:
\begin{lem}\label{lem:transitivity on k vertical chains}
For every $k\in\{0,\ldots m\}$ the group $ G$ acts transitively on
\begin{enumerate}
 \item the pairs $(x,T)$ where $x\in  \Ss_{m,n}$ is a point and $T$ is an $m$-chain with $i_x(T)=k$,
 \item the triples $(x,y,T)$ where $x\tra y$, $y\in T$ and $i_x(T)=k$.
\end{enumerate}
In particular the intersection index is a complete invariant of $m$-chains up to the $\SU(m,n)$-action.
\end{lem}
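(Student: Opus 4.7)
The plan is to derive both transitivity statements from Witt's extension theorem for non-degenerate Hermitian forms, which guarantees that any linear isometry between two subspaces of $(\C^{m+n},h)$ extends to a global isometry, that is, an element of $\U(m,n)$. Since the central subgroup $\U(1)\,\Id\subset \U(m,n)$ acts trivially on the Grassmannian of $\C^{m+n}$ and satisfies $\U(m,n)=\SU(m,n)\cdot\U(1)\,\Id$, the orbits of $\U(m,n)$ and of $\SU(m,n)$ on any configuration of subspaces coincide, and it is enough to prove transitivity for the larger group $\U(m,n)$.

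For part (1), let $(x_j,T_j)$, $j=1,2$, be two pairs with $i_{x_j}(T_j)=k$, write $V_j$ for the $2m$-dimensional span of $T_j$, and set $F_j:=x_j+V_j$, of dimension $3m-k$. Using that $V_j^\perp$ is negative definite, so that in particular $x_j\cap V_j^\perp=0$, one checks directly that $h|_{F_j}$ is non-degenerate. By Witt's theorem, to produce an element of $\U(m,n)$ sending $(x_1,V_1)$ to $(x_2,V_2)$ it suffices to exhibit a linear isometry $\phi_0\colon F_1\to F_2$ sending $x_1$ to $x_2$ and $V_1$ to $V_2$, for then any extension automatically sends $T_1$ to $T_2$. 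I would build $\phi_0$ by constructing on each $F_j$ a basis adapted to the structure: start with a basis of $W_j:=x_j\cap V_j$, complete it to a basis of $x_j$ by isotropic vectors, extend the basis of $W_j$ inside $V_j$ to a basis of a maximal isotropic $X_j\supseteq W_j$, and adjoin the dual basis of a complementary maximal isotropic $Y_j\subset V_j$. The cross pairings between the $(m-k)$-dimensional complement of $W_j$ in $x_j$ and the bases of $X_j,Y_j$, recorded in two $(m-k)\times(m-k)$ matrices, are the only remaining free data; by modifying the complementing vectors of $x_j$ by elements of $W_j$ and using the freedom in the choice of $X_j$ and of bases for $X_j$ and $Y_j$, these matrices can be brought to a canonical form depending only on $k$. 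Identifying the normal-form bases on $F_1$ and $F_2$ then yields the desired isometry $\phi_0$.

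For part (2), I would apply the same construction to the enlarged data $(F_j,h,x_j,V_j,y_j)$, now taking the complementary maximal isotropic $Y_j$ in the adapted basis equal to the given $y_j$. This is legitimate, because $y_j$ is a maximal isotropic of $V_j$ transverse to $x_j$ (hence to $W_j$), and by Witt's theorem applied inside $V_j$ one can always choose $X_j\supseteq W_j$ maximal isotropic of $V_j$ with $X_j\cap y_j=0$. The normalization step from part (1) carries through, so the resulting $\phi_0$ maps $y_1$ to $y_2$ by construction, and its Witt extension delivers the required element of $\U(m,n)$ sending the first triple to the second.

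The main technical obstacle is the normalization step in part (1): one must verify that the two $(m-k)\times(m-k)$ cross-pairing matrices can be simultaneously reduced to a canonical form independent of the chosen pair. The key input is the interaction between the isotropy of $x_j$ and the negative definiteness of $V_j^\perp$, which forces the induced Hermitian form on $\pi_{V_j}(x_j)/W_j$ to be positive definite and thereby constrains the cross pairings to a single orbit under the allowed basis changes. The concluding statement that the intersection index is a complete invariant of $m$-chains under the $\SU(m,n)$-action is then an immediate consequence of part (1).
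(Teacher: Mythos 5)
Your argument via Witt's extension theorem is sound and genuinely different from the paper's. The paper proves (2) directly: transitivity of $G$ on transverse pairs reduces to $(x,y)=(v_\infty,v_0)$, and the Levi subgroup $L\cong\GL_m(\C)\times\SU(n-m)$ is shown to act transitively on $k$-vertical chains $T$ through $v_0$ by parametrizing $T$ through its negative-definite orthogonal $V_T^\perp$, which admits a column basis $\bsm 0\\\Id_{n-m}\\Z_3\esm$ with $m-\rk(Z_3)=k$; transitivity then becomes a rank normal-form statement for the twisted $\GL_m\times\U(n-m)$-action on $m\times(n-m)$ matrices. Your approach instead works inside the (correctly identified) nondegenerate subspace $F_j=x_j+V_j$, of signature $(m,2m-k)$, and offloads everything onto Witt; this is more conceptual and avoids coordinatizing the parabolic. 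The normalization you flag as the technical crux does go through, and your stated key input is exactly the right one: decomposing $F_j=V_j\oplus(V_j^\perp\cap F_j)$ orthogonally, $x_j$ is the graph of a linear map $\psi$ from $P:=\pi_{V_j}(x_j)$ into the negative-definite $(m-k)$-dimensional complement, with $\ker\psi=W_j$; isotropy of $x_j$ forces $h|_P=-\psi^*(h)$, so $h|_P$ is positive semidefinite with radical exactly $W_j$, and one then checks that $\U(V_j)\times\U(V_j^\perp\cap F_j)$ acts transitively on such pairs $(P,\psi)$ (transitivity of $\U(m,m)$ on positive semidefinite $m$-planes of corank $k$, plus the residual unitary freedom on $W:=V_j^\perp\cap F_j$ to absorb the anti-isometry $\overline\psi\colon P/W_j\to W$). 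Two small corrections. First, the cross pairing of the complement $x_j'$ of $W_j$ in $x_j$ against $Y_j$ is a priori an $(m-k)\times m$ block, not $(m-k)\times(m-k)$: the $(m-k)\times k$ sub-block paired against the part of $Y_j$ dual to $W_j$ is precisely what is killed by translating your complementing vectors by elements of $W_j$, and only the remaining $(m-k)\times(m-k)$ block survives as a free parameter. Second, for (2) the cleanest route, once (1) is in hand, is to fix $(x,V)$ and apply Witt once more to the degenerate subspace $W_j+y_1\subset V$ (whose radical has dimension $m-k$) to move any maximal isotropic $y_1\subset V$ transverse to $W_j$ to any other such $y_2$ while preserving $x$ and $V$; your choice $Y_j=y_j$ in the normal-form basis accomplishes the same thing but requires re-verifying that the remaining normalization freedom is unaffected by pinning $Y_j$.
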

\begin{proof}
We will prove directly the second statement. By transitivity of the $G$-action on the set of transverse pairs we can assume that $x=v_\infty$, $y=v_0$, in particular this reduces the proof to showing that $L$ acts transitively on the set of chains through $v_0$. It is not hard to show that the orthogonal to such a chain $T$ has a basis of the form $\bpm0&\Id_{n-m}&Z_3\epm^T$ for some matrix $Z_3$: any vector contained in the orthogonal to $v_0$ has vanishing components in $v_\infty$, moreover, since the orthogonal to a chain is positive definite we can assume that the central block is the identity up to changing the basis. Moreover it holds that $m-\rk (Z_3)=i_{v_\infty}(T)$. The statement is now obvious.
\end{proof}
As explained at the beginning of the section we want to give a parametrization of a generic chain $T$ and study the restriction of $\pi_{v_\infty}$ to $T$. In view of Lemma \ref{lem:transitivity on k vertical chains}, it is  enough to understand, for every $k$, the parametrization and the projection of a single $k$-vertical chain.
The $k$-vertical chain we will deal with is the chain with associated linear subspace
$$V_k=\<e_i,e_{j}+e_{m+j-k}+e_{n+j},v_0|\;1\leq i\leq k<j\leq m \>.$$
\begin{lem}
 $T_k$ is the linear subspace associated to a $k$-vertical chain $T_k$.
\end{lem}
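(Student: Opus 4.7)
The plan is to verify directly the two defining properties: that $V_k$ is $2m$-dimensional with $h|_{V_k}$ of signature $(m,m)$ (so $V_k$ is the linear subspace of an $m$-chain $T_k$) and that $\dim(V_k\cap v_\infty)=k$ (so $T_k$ is $k$-vertical). Everything reduces to a routine linear-algebra computation with the block form of $h$ recalled in Section \ref{sec:S_mn1}.

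First I would fix the natural spanning set of $V_k$ given by the $2m$ vectors $e_1,\dots,e_k$, the ``mixed'' vectors $f_j:=e_j+e_{m+j-k}+e_{n+j}$ for $k<j\le m$, and $e_{n+1},\dots,e_{n+m}$ (which span $v_0$). By inspecting the components of these vectors across the three blocks of sizes $m$, $n-m$, $m$, they are manifestly linearly independent and hence form a basis of $V_k$; in particular $\dim V_k=2m$.

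Next I would compute the Gram matrix of $h|_{V_k}$ in this basis. Using the formula $h(v,w)=x^*z'+z^*x'-y^*y'$ coming from the block presentation of $h$, a direct check shows that $V_k$ decomposes $h$-orthogonally as $V_k=W_1\oplus^\perp W_2$, where
$$W_1=\bigoplus_{i=1}^{k}\langle e_i,e_{n+i}\rangle,\qquad W_2=\bigoplus_{j=k+1}^{m}\langle f_j,e_{n+j}\rangle.$$
On each $2$-dimensional summand of $W_1$ the form has matrix $\bsm 0&1\\1&0\esm$, and on each summand of $W_2$ it has matrix $\bsm 1&1\\1&0\esm$, both of determinant $-1$ and hence of signature $(1,1)$. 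Summing over the $m$ summands yields signature $(m,m)$ for $h|_{V_k}$, so $V_k$ is indeed the linear space of an $m$-chain, which we call $T_k$.

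Finally, for the intersection index: $e_1,\dots,e_k$ already lie in $v_\infty$, while each remaining basis vector has a nonzero component outside $v_\infty$ (in the middle block for $f_j$, and in the last block for the $e_{n+l}$ and for $f_j$). Hence no nontrivial linear combination involving the vectors $f_{k+1},\dots,f_m,e_{n+1},\dots,e_{n+m}$ can land in $v_\infty$, so $V_k\cap v_\infty=\langle e_1,\dots,e_k\rangle$ has dimension exactly $k$, i.e.\ $i_{v_\infty}(T_k)=k$. The only point requiring attention is careful bookkeeping of the three blocks and of the index ranges (in particular that $m+j-k$ always lies in the middle block), but no genuine obstacle arises.
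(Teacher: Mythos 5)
Your proof is correct and follows essentially the same strategy as the paper's: decompose $V_k$ as the $h$-orthogonal sum of the ``first $k$'' hyperbolic part $\langle e_i,e_{n+i}\rangle_{i\le k}$ and the ``mixed'' part $\langle f_j,e_{n+j}\rangle_{k<j\le m}$, read off signature $(m,m)$ block by block, and identify $V_k\cap v_\infty=\langle e_1,\dots,e_k\rangle$. The only difference is cosmetic: you make the $2\times 2$ Gram matrices explicit where the paper simply asserts the signatures of the two summands.
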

\begin{proof}
 $V_k$ is a $2m$-dimensional subspace containing $v_0$. Moreover $V_k$ splits as the orthogonal direct sum
$$\begin{array}{rl}
V_k&=V_k^0\overset\bot \oplus  V_k^1=\\
&=\<e_i,e_{n+i}|\;1\leq i\leq k\>\overset \bot\oplus\langle e_{j}+e_{m+j-k}+e_{n+j},e_{n+j}|\;k+1\leq j\leq m\rangle=\\
&=\<v_\infty\cap V_k,e_{n+i}|\;1\leq i\leq k\>\overset \bot\oplus\langle e_{j}+e_{m+j-k}+e_{n+j},e_{n+j}|\;k+1\leq j\leq m\rangle.   
  \end{array}
$$
Since $v_\infty\cap V_k=\langle e_1,\ldots, e_k\rangle$, we get  that $i_{v_\infty}( T_k)$ is $k$. Since $ h|_{V_k^0}$ has signature $(k,k)$ and $ h|_{V_k^1}$ has signature $(m-k,m-k)$, we get that  the restriction of $ h$ on $V_k$ has signature $(m,m)$ and this concludes the proof.
 \end{proof}

\begin{lem}\label{lem:parametrization of k-vertical chains}
$\Hh_{m,n}(v_\infty)\cap T_k$ consists precisely of those subspaces of $\C^{m+n}$ that admit a basis of the form 
$$\begin{array}{ccc}
 \bpm \frac {E^*E}2 +C&E^*X\\
 E&\Id+X\\
 E &\Id+X\\
 0&0\\
 \Id_k & 0\\
 0&\Id_{m-k}
 \epm
 & \text{ with } & \left\{\begin{array}{l} E\in M((m-k)\times k,\C)\\
                               X\in \U(m-k)\\
                               C\in \mathfrak u(k).
 \end{array}\right.
 \end{array}$$
 The projection of $T_k$ is contained in an affine subspace of $M((n-m)\times m,\C)$ of dimension  $m^2-km$, and consists of the points of $M((n-m)\times m,\C)$ that have expression
$\bsm E&\Id+X\\0&0\esm$  with $E\in M((m-k)\times k,\C)$ and $X\in \U(m-k)$.
\end{lem}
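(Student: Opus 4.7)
The plan is to verify directly, in the Heisenberg model, that the subspaces of $\C^{m+n}$ which are simultaneously isotropic, transverse to $v_\infty$, and contained in the fixed $2m$-dimensional subspace $V_k$ are exactly those listed. Any element of $\Hh^{m,n}(v_\infty)$ admits a basis of the form $\bsm Z\\ W\\ \Id_m\esm$ with $Z\in M(m\times m,\C)$ and $W\in M((n-m)\times m,\C)$, and a direct computation of the Hermitian form on such a basis shows that isotropy amounts to the single matrix identity $Z+Z^*=W^*W$.

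The first step is to describe those matrices $\bsm Z\\ W\\ \Id_m\esm$ whose column span lies in $V_k$, before imposing isotropy. Using the given spanning set $\{e_i\}_{i\leq k}\cup\{e_j+e_{m+j-k}+e_{n+j}\}_{k<j\leq m}\cup\{e_{n+l}\}_{1\leq l\leq m}$ of $V_k$ and matching the last block of each column to the corresponding column of $\Id_m$, one reads off how the middle family of generators must contribute. A straightforward calculation forces $W$ to have block shape $\bsm E & B\\ 0 & 0\esm$ with $E\in M((m-k)\times k,\C)$ and $B\in M((m-k)\times(m-k),\C)$, while $Z$ is forced to have shape $\bsm A & R\\ E & B\esm$ whose lower two blocks coincide with the upper blocks of $W$ and whose upper blocks $A\in M(k\times k,\C)$, $R\in M(k\times(m-k),\C)$ remain unconstrained at this stage.

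The second step is to impose $Z+Z^*=W^*W$. A block computation yields the four equations
\[A+A^*=E^*E,\quad R+E^*=E^*B,\quad E+R^*=B^*E,\quad B+B^*=B^*B,\]
the middle two being equivalent upon taking adjoints. The first gives $A=E^*E/2+C$ for some $C\in\fru(k)$; the middle pair gives $R=E^*(B-\Id)$; and setting $X=B-\Id$ the fourth becomes $X^*X=\Id$, i.e.\ $X\in\U(m-k)$. Substituting $B=\Id+X$ and $R=E^*X$ reproduces the asserted block matrix. The projection statement is then immediate: $\pi_{v_\infty}$ reads off the middle block, which takes the form $\bsm E & \Id+X\\ 0 & 0\esm$ and hence lies in the complex affine subspace of $M((n-m)\times m,\C)$ whose lower $(n-2m+k)\times m$ rows vanish, of complex dimension $(m-k)k+(m-k)^2=m(m-k)=m^2-km$. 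The argument is a block-matrix computation throughout, so the only care required is in the bookkeeping; no conceptual obstacle arises.
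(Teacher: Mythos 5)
Your proof is correct and follows essentially the same line as the paper's. The one small variation is how you establish the block structure forced by the containment $z\subset V_k$: the paper computes $V_k^\bot$ and imposes $z\perp V_k^\bot$, whereas you decompose the columns of the basis matrix against the explicit spanning set of $V_k$. These two steps are dual to each other and lead to exactly the same constraints on $Z$ and $W$; from there the isotropy equations, the solution in terms of $E$, $C$, and $X=B-\Id$, and the dimension count $(m-k)m = m^2-km$ all agree with the paper's computation.
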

\begin{proof}
It is enough to check that the orthogonal to $V_k$ is 
$$V_k^\bot=\< e_{m+j}+e_{n+j+k},e_{m+l}|\; 1\leq j\leq m-k<l\leq n-m\>.$$
This implies that any $m$-dimensional subspace $z$ of $V_k$, that is transverse to $v_\infty$, has a basis of the form 
$$\begin{array}{ccc}
 z=\bpm Z_{11} &Z_{12}\\
 Z_{21}&Z_{22}\\
 Z_{21} &Z_{22}\\
 0&0\\
 \Id & 0\\
 0&\Id
 \epm\hspace{-7pt}
 \begin{array}{l}
  _k\\_{m-k}
  \\_{m-k}\\_{n-2m+k}\\_{k}\\_{m-k}
 \end{array}
 & \begin{array}{l}\text{and the}\\\text{restriction }\\\text{of $ h$ to $z$ }\\\text{is zero}\\\text{if and only if }\end{array}& 
 \left\{\begin{array}{l}  Z^*_{11}+Z_{11}=Z_{21}^*Z_{21}\;(11)\\
                          Z^*_{21}+Z_{12}=Z_{21}^*Z_{22}\; (12)\\
                          Z^*_{12}+Z_{21}=Z_{22}^*Z_{21}\; (21)\\
                          Z^*_{22}+Z_{22}=Z_{22}^*Z_{22}\;(22).
 \end{array}\right.
 \end{array}$$
Equation $(22)$ restates as $Z_{22}=\Id+X$ for some $X\in \U(m-k)$:
indeed a square matrix $Z$ satisfies the equation $Z^*+Z=Z^*Z$, if and only if the equation $(Z-\Id)^*(Z-\Id)=Z^*Z-Z-Z^*+\Id=\Id$ holds, which means that $Z-\Id$ belongs to $\U(m-k)$.

This concludes the proof of the first part of the lemma: the $(m-k)\times k$ matrix $Z_{21}$ can be chosen arbitrarily, Equation $(12)$ uniquely determines $Z_{12}$ in function of $Z_{21}$ and $Z_{22}$, and Equation $(11)$ determines the Hermitian part of $Z_{11}$ in function of $Z_{21}$, but is satisfied independently on the antiHermitian part of $Z_{11}$. This proves the first part of the lemma. 

The second part is a direct consequence of the parametrization of $T_k^{v_\infty}$ we just gave, together with the identification of $W_{v_\infty}$ and $M((n-m)\times m,\C)$ given in Lemma \ref{lem:vchains}.
\end{proof}
\begin{defn}
We will call a subset of $ W_{x}$ that is the projection of a $k$-vertical chain a \emph{$(m,k)$-circle}.
\end{defn}
 
The reason for the name \emph{circle} is due to the fact that, in the case $(m,n)=(1,2)$ the projections of horizontal chains are Euclidean circles in $\C$. This fact was first observed and used by Cartan in \cite{Cartan}.  In fact every Euclidean circle $C\subseteq \C^{p-1}$ is a circle in our generalized definition, namely is the projection of some 1-chain of $\deH^p$. Indeed we know from Lemma \ref{lem:parametrization of k-vertical chains} that the Euclidean circle $(1+e^{it},0,\ldots,0)\in \C^{p-1}$ is the projection of the chain associated to the linear subspace $\<e_1+e_2,e_{p+1}\>$ of $\C^{p+1}$. Moreover the set of Euclidean circles is a homogeneous space under the group of Euclidean similarities of $\C^{p-1}$ and the group $Q=\stab(v_\infty)$ acts on $\C^{p-1}$ as the full group of Euclidean similarities.

In the general case it is important to record both the dimension of the $m$-chain that is projected and the dimension of the $\U(m-k)$ factor in the projection. This explains  the notation. 
We will call \emph{generalized circle} any subset of $M((n-m)\times m,\C)$ arising as a projection of an $m$-chain. In particular a generalized circle is an $(m,k)$-circle for some $k$.

The ultimate goal of this section is to understand the possible lifts of a given $(m,k)$-circle. We begin by analyzing the stabilizers in $\SU(\C^{m+n}, h)$ of some configurations:
\begin{lem}\label{lem:S_0}
The stabilizer in $\SU(\C^{m+n}, h)$ of the triple $(v_\infty,v_0,T_k)$ is the subgroup $S_0$ of $L\cong \GL_m(\C)\times\SU(n-m)$ consisting of pairs of the form

$$\begin{array}{ccc}

\left(
\bpm Y& X\\0&\ov y y^{-1}C_{11}\epm, \bpm C_{11}&0\\0&C_{22}\epm

\right)
& \text{with} &
\left\{
\begin{array}{l}
 C_{11} \in \U(m-k)\\
C_{22}\in \U(n-2m+k)\\
X\in M(k\times(m-k),\C)\\
Y\in \GL_k(\C), \,y=\det(Y) .\end{array}\right.

\end{array}$$
\end{lem}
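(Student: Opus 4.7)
The plan is to first use that every element of $S_0$ stabilizes the pair $(v_\infty,v_0)$ and hence lies in $L$, so that it can be parametrized as the image of a pair $(A,B) \in \GL_m(\C) \times \SU(n-m)$ under the explicit isomorphism of Section \ref{ssec:Heisenberg}; concretely the corresponding matrix is $\mathrm{diag}(A,\bar a a^{-1}B,A^{-*})$ with $a=\det A$. I then need only enforce the extra condition that the $2m$-dimensional subspace $V_k$ associated with $T_k$ is preserved.

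The first reduction uses that $v_\infty \cap V_k = \langle e_1,\ldots,e_k\rangle$. Since $g$ acts on $v_\infty$ as $A$, preservation of this intersection forces $A$ to be block upper-triangular in the $k+(m-k)$ splitting, i.e.\ $A=\bpm Y & X\\ 0 & W\epm$ with $Y\in \GL_k(\C)$ and $W\in \GL_{m-k}(\C)$. Accordingly, I decompose the middle block of $g$ in the $(m-k)+(n-2m+k)$ block form $\bar a a^{-1}B=\bsm D_{11} & D_{12}\\ D_{21} & D_{22}\esm$. The heart of the argument is then to impose that $g$ sends each generator $e_j+e_{m+j-k}+e_{n+j}$ (for $k<j\leq m$) back into $V_k$. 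The three summands contribute respectively via $Ae_j$ (top block), $(\bar a a^{-1}B)e_{m+j-k}$ (middle block) and $A^{-*}e_j$ (bottom block). Working modulo $v_0\subset V_k$, the resulting vector must lie in the span of $\{e_i\}_{i\leq k}$ and $\{e_{j'}+e_{m+j'-k}\}_{k<j'\leq m}$; extracting the coefficients of $e_{2m-k+l}$ (for $1\leq l\leq n-2m+k$) and comparing the coefficients of $e_{k+l}$ and $e_{m+l}$ yields the two conditions $D_{21}=0$ and $D_{11}=W$. Since $\bar a a^{-1}B$ is a scalar multiple of a unitary matrix, the vanishing of its lower-left block forces $D_{12}=0$ as well, so the middle block is block-diagonal.

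To match the lemma's parametrization, I write $B=\bsm C_{11} & 0\\ 0 & C_{22}\esm$ with $C_{11}\in \U(m-k)$, $C_{22}\in \U(n-2m+k)$ and $\det C_{11}\det C_{22}=1$. The equality $W=D_{11}=\bar a a^{-1}C_{11}$ then becomes $W=\bar y y^{-1}C_{11}$ after checking that the unit scalars $\bar a/a$ and $\bar y/y$ agree, which follows from $a=\det Y\det W=y\det W$ together with $|\det C_{11}|=1$. The main difficulty is really only bookkeeping: one must carefully track which columns of $A$ and which rows and columns of the middle block are activated by each generator $e_j+e_{m+j-k}+e_{n+j}$, and correctly read off the coupling conditions between the $A$-block and the $B$-block. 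Once that is done, the identification of the scalar factor $\bar a a^{-1}$ with $\bar y y^{-1}$ is a short computation, and no further constraints emerge from the remaining generators $e_1,\ldots,e_k$ of $V_k\cap v_\infty$ or from $v_0$, since those sit inside the subspaces $v_\infty$ and $v_0$ that are already preserved.
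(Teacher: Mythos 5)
Your strategy of imposing that $g$ stabilizes $V_k$ directly, rather than $V_k^\perp$ as the paper does, is a valid dual of the paper's argument. The paper's route is lighter on bookkeeping because $V_k^\perp$ has the compact basis $\bsm 0\\ \Id_{n-m}\\ X\esm$ with $X=\bsm 0&0\\ \Id_{m-k}&0\esm$, so the whole stabilizer condition collapses to a single matrix equation $X=\bar a a^{-1}A^*XC$, whereas you must track the mixed generators $e_j+e_{m+j-k}+e_{n+j}$ through all three diagonal blocks of $g$; but the resulting system of conditions is the same. Your block analysis up to ``$W=D_{11}$, $D_{21}=0$, hence $D_{12}=0$'' is correct (including the observation that a unitary matrix with a vanishing off-diagonal block is block diagonal), and it yields exactly what the paper also gets from $V_k^\perp$, namely $A_{22}=\bar a a^{-1}C_{11}$ with $a=\det A$.

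The genuine gap is the final reconciliation step. You assert that $\bar a/a=\bar y/y$ ``follows from $a=\det Y\det W=y\det W$ together with $|\det C_{11}|=1$.'' It does not: from $a=y\det W$ and $|\det W|=1$ one only gets $\bar a/a=(\bar y/y)\cdot(\overline{\det W}/\det W)=(\bar y/y)\,(\det W)^{-2}$, and $(\det W)^2$ is a unit complex number with no reason to equal $1$. Concretely, take $m-k=1$ and $Y=[1]$; the stabilizer condition you derived becomes $W=\bar a a^{-1}C_{11}$ with $a=W$, i.e.\ $W^{3}=C_{11}$, and any unit-modulus $W$ occurs as $C_{11}$ ranges over $\U(1)$, while $\bar a/a=W^{-2}$ and $\bar y/y=1$ differ whenever $W^2\neq1$. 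What your computation (and the paper's, which reads off $A_{22}^*=a\bar a^{-1}C_{11}^{-1}$) actually proves is that the unit scalar in the lower-right block of $A$ is $\bar a a^{-1}$ with $a=\det A$. The $\bar y y^{-1}$ in the statement appears to be a slip in the paper; you should state the scalar you actually derived rather than force it into the printed form by an identity that is false.
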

\begin{proof}
We determined in Section \ref{ssec:Heisenberg} that the stabilizer $ L$ in $\SU(\C^{m+n}, h)$ of the pair $(v_\infty,v_0)$ is isomorphic to $\GL_m(\C)\times \SU(n-m)$. The stabilizer of the triple  $(v_\infty,v_0,T_k)$ is clearly contained in $ L$ and consists precisely of the elements of $ L$ stabilizing $V_k^\bot$.

In the proof of Lemma \ref{lem:parametrization of k-vertical chains} we saw that the subspace $V_k^\bot$ has a basis of the form $\bsm0\\Id_{n-m}\\X\esm$ where $X$ denotes the $m\times (n-m)$ matrix $\bsm0&0\\Id_{m-k}&0\esm$.

From the explicit expression of elements in $L$ we get
$$\bpm A&&\\&\ov a a^{-1}C&\\&&A^{-*}\epm\bpm0\\\Id\\X\epm=\bpm0\\\ov a a^{-1}C\\A^{-*}X\epm\cong\bpm0\\\Id\\a\ov a^{-1}A^{-*}XC^{-1}\epm.$$

In turn the requirement that $a\ov a^{-1}A^{-*}XC^{-1}=X$, that is $X=\ov a a^{-1}A^*XC$, implies, in the suitable block decomposition for the matrices, that
$$\begin{array}{c}

\ov a a^{-1}\bpm A_{11}^*&A_{21}^*\\A_{12}^*&A_{22}^*\epm\bpm0&0\\\Id_{m-k}&0\epm\bpm C_{11}&C_{12}\\C_{21}&C_{22}\epm=\\

\ov a a^{-1}\bpm A_{21}^*&0\\A_{22}^*&0\epm\bpm C_{11}&C_{12}\\C_{21}&C_{22}\epm=
\ov a a^{-1}\bpm A_{21}^*C_{11}&A_{21}^*C_{12}\\A_{22}^*C_{11}&A_{22}^*C_{12}\epm.
\end{array}$$

This implies that $A_{22}^{*}= a \ov a^{-1}C_{11}^{-1}$ and $C_{12}=A_{21}=0$. 
Moreover since $C$ is unitary, also $C_{21}$ must be 0, and both $C_{22}$ and $C_{11}$ must be unitary.
 This concludes the proof.
\end{proof}

Let us now denote by $o$ the point $o=\pi_{v_\infty}(v_0)=0$ in $W_{v_\infty}$ and by $C_k$ the $(m,k)$-circle that is the projection of $T_k$. We will denote by $S_1$ the stabilizer in $ Q$ of the pair $(o,C_k)$.

\begin{lem}\label{lem:S_1}
The stabilizer of the pair $(o,C_k)$ is the group
 $$S_1=\Stab_{Q}(o,C_k)=M\rtimes S_0$$
 where, as above, we denote by $M$ the center of the nilpotent radical $N$ of $Q$ and by $S_0$  the stabilizer in $Q$ of the pair $(v_0,T_k)$.
\end{lem}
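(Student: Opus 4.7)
The plan is to prove both inclusions of the claimed equality, reducing the nontrivial direction to the purely $L$-statement that $\Stab_L(C_k)=S_0$. For the easy inclusion $M\rtimes S_0\subseteq\Stab_Q(o,C_k)$: the subgroup $M$ is the kernel of the natural projection $N\to N/M\cong W_{v_\infty}$, so $M$ acts trivially on $W_{v_\infty}$ and fixes $o$ and $C_k$; the subgroup $S_0\subseteq L\subseteq Q$ fixes $v_\infty,v_0,T_k$ by definition, hence fixes their $\pi_{v_\infty}$-images $o$ and $C_k$. Since $S_0\subseteq L$ normalizes $N$ and $M$ is characteristic in $N$ (being its center), $S_0$ normalizes $M$; combined with $S_0\cap M\subseteq L\cap N=\{e\}$, this exhibits $M\rtimes S_0$ as a subgroup of $Q$.

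Next I would determine $\Stab_Q(o)$. Writing $q\in Q$ uniquely as $q=ln$ with $l\in L$ and $n\in N$, the $L$-action on $W_{v_\infty}$ recalled in Section~\ref{ssec:Heisenberg} is linear and fixes the origin $o$. Hence $q\cdot o=o$ forces $n\cdot o=o$; but $N$ acts on $W_{v_\infty}\cong N/M$ by left translation, so the stabilizer of $o$ in $N$ equals $M$. Therefore $\Stab_Q(o)=L\ltimes M$. Given $q\in\Stab_Q(o,C_k)\subseteq L\ltimes M$, decompose $q=lm$ with $l\in L$, $m\in M$; since $m$ acts trivially on $W_{v_\infty}$, the condition $q\cdot C_k=C_k$ collapses to $l\cdot C_k=C_k$. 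The lemma therefore reduces to the identity $\Stab_L(C_k)=S_0$.

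The inclusion $S_0\subseteq\Stab_L(C_k)$ is immediate. For the reverse, I would compute directly, writing $l=(A,B)$ in the $\GL_m(\C)\times\SU(n-m)$ parametrization and applying the action $X\mapsto\bar a a^{-1}BXA^*$ to a point $\bsm E&\Id+X'\\0&0\esm$ of $C_k$, decomposed into blocks of sizes $(m-k,n-2m+k)\times(k,m-k)$. Requiring the bottom strip to vanish for all $(E,X')$ (taking $X'=-\Id$ and letting $E$ vary freely) forces $B_{21}=0$; unitarity then gives $B_{12}=0$, so $B$ is block diagonal. Next, requiring the $(1,2)$-block to lie in $\Id+\U(m-k)$ as $E$ varies forces $A_{21}=0$, so $A$ is block upper triangular. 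The residual constraint that $\bar a a^{-1}B_{11}(\Id+X')A_{22}^*$ belongs to $\Id+\U(m-k)$ for every $X'\in\U(m-k)$ then pins down $A_{22}$ as a prescribed multiple of $B_{11}$, reproducing exactly the parametrization of $S_0$ from Lemma~\ref{lem:S_0}.

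The main obstacle is this last algebraic step. Setting $P:=\bar a a^{-1}B_{11}A_{22}^*$ and evaluating the map $X'\mapsto P+\bar a a^{-1}B_{11}X'A_{22}^*-\Id$ at $X'=\pm i\Id$ and subtracting, respectively adding, the resulting unitarity equations shows that $P$ is simultaneously Hermitian and idempotent; the invertibility of $B_{11}$ and $A_{22}$ (forced by $B\in\SU(n-m)$ being block diagonal and $A\in\GL_m$ being block upper triangular) then forces the projection $P$ to be of full rank, so $P=\Id$. This rigidly links $A_{22}$ to $B_{11}$, and matching the remaining scalar against the determinant condition defining $L$ recovers the relation $A_{22}=\bar y y^{-1}C_{11}$ of Lemma~\ref{lem:S_0}, showing $l\in S_0$ and completing the proof.
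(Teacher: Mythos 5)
Your proof is correct and its skeleton mirrors the paper's: first reduce via $\Stab_Q(o)=L\ltimes M$ to the identity $\Stab_L(C_k)=S_0$, then analyze the $L$-action on $C_k$ in blocks, forcing $B_{21}=0$ (and by unitarity $B_{12}=0$) and $A_{21}=0$. The one genuine divergence is the final algebraic step. The paper isolates the residual constraint into the standalone Claim~\ref{claim:fixing Um} and proves that claim by the birational inversion $X\mapsto X^{-1}$, which carries the curved set $\Uu=\Id+\U(l)$ onto the affine coset $\frac12\Id+\fru(l)$ and thereby linearizes the problem in $W$. You instead substitute $X'=\pm i\Id$ directly into the unitarity condition, sum and difference the two identities to obtain $2P^*P=P+P^*$ and $P=P^*$ for $P=\ov a\, a^{-1}B_{11}A_{22}^*$, and conclude that $P$ is a Hermitian idempotent, hence $P=\Id$ by invertibility of $B_{11}$ and $A_{22}$. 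Both finishes are short and valid; the paper's Claim~\ref{claim:fixing Um} buys a self-contained, reusable statement about $\U(l)$ and avoids choosing particular test points, whereas your substitution avoids the change of variables and makes the idempotence mechanism visible at a glance. One small slip worth fixing: to force $B_{21}=0$ you substitute $X'=-\Id$ and vary $E$, which is vacuous when $k=0$ since there is no $E$-block; use $X'=\Id$ in that case, which is exactly the reason the paper invokes both $\Id$ and $-\Id\in\U(m-k)$.
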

\begin{proof}
Recall that any element in $Q$ can be uniquely written as a product $nl$ where $n$ is in $N$, and $l$ belongs to $L$, the Levi component of $ Q$.
Since any element in $S_1$ fixes, by assumption, the point $o=\pi_{v_\infty}(v_0)$ and since any element in $L$ fixes $o$, if $nl$ is in $S_1$ then $n(o)=o$ that, in turn, implies that $n$ belongs to $M$. Hence $S_1$ is of the form $M\rtimes S$ for some subgroup $S$ of $L$.

Let now $X$ be a point in $W_{v_\infty}=M((n-m)\times m,\C)$. The action of $(A,C)\in  L$ on $W_{v_\infty}$ is $X\mapsto \ov a a^{-1}CXA^*$. We want to show that if $C_k$ is preserved then $(A,C)$ must belong to $S_0$. We have proven in Lemma \ref{lem:parametrization of k-vertical chains} that any point $z\in \pi_{v_\infty}(T_k)$ can be written as $\bsm E&\Id+Z\\0&0\esm$ for some matrices $E\in M((m-k)\times k,\C)$ and $Z\in \U(m-k)$. Explicit computations give that

$$\begin{array}{rl} \bpm E&\Id+Z\\0&0\epm&=\ov a a^{-1} \bpm C_{11}&C_{12}\\C_{21}&C_{22}\epm\bpm E&\Id+Z\\0&0\epm\bpm A_{11}^*&A_{21}^*\\A_{12}^*&A_{22}^*\epm=\\
&=\ov a a^{-1}\bpm C_{11}E&C_{11}(\Id+Z)\\C_{21}E&C_{21}(\Id+Z)\epm\bpm A_{11}^*&A_{21}^*\\A_{12}^*&A_{22}^*\epm.\end{array}$$
Since $A$ is invertible, $E$ is arbitrary and both $\Id$ and $-\Id$ are in $\U(m-k)$, the matrix $C_{21}$ must be zero. Hence $C$ must have the same block form of a genuine element of $S_0$. In particular $C_{11}$ is invertible.
Since $\ov a a^{-1}C_{11}(EA_{21}^*+(\Id+Z)A_{22}^*)$ must be an element of $\Id+U(m-k)$ for every $E$, we get that $A_{21}^*$ must be zero. 

The result now follows from Claim \ref{claim:fixing Um} below.
\end{proof}

\begin{claim}\label{claim:fixing Um}
Let $C\in {\rm U}(l)$ and $A\in \GL_l(\C)$ be matrices and let $\Uu$ denote the set 
$$\Uu=\{\Id+X|\;X\in \U(l)\}\subset M(l\times l,\C).$$
If $C\Uu A^*=\Uu$ then $A=C$.
\end{claim}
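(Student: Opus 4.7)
The plan is to analyze the differential of the linear map $\phi \colon Z \mapsto CZA^{\ast}$ at the origin. First observe that $-\Id \in \U(l)$, so $0 = \Id + (-\Id)$ lies in $\mathcal U$; since $\phi$ is $\C$-linear, $\phi(0) = 0$, and by hypothesis $\phi$ restricts to a bijection of $\mathcal U$ onto itself. Because $\mathcal U = \Id + \U(l)$ is a smooth real submanifold of $M(l\times l,\C)$, its tangent space at $0$ equals the tangent space of $\U(l)$ at $-\Id$, namely $-\Id \cdot \fru(l) = \fru(l)$. As the differential of $\phi$ at $0$ agrees with $\phi$ itself, it must carry $\fru(l)$ into $\fru(l)$.

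Writing out $(CXA^{\ast})^{\ast} = -CXA^{\ast}$ for $X \in \fru(l)$, and using $X^{\ast} = -X$, produces the identity $CXA^{\ast} = AXC^{\ast}$. Since $\fru(l)$ spans $M(l\times l,\C)$ over $\C$ (as $M = \fru(l) \oplus i\,\fru(l)$ over $\R$), this identity extends to every $X$. Next I would specialize to the matrix units $X = E_{ij}$: writing $c_i$ and $a_i$ for the $i$-th columns of $C$ and $A$, this yields the rank-one identities $c_i a_j^{\ast} = a_i c_j^{\ast}$ for all $i,j$. Since $C$ is unitary every $c_i \neq 0$, so the diagonal case $i = j$ forces $a_i = \mu_i c_i$ with $\mu_i \in \R$, and the off-diagonal case $i \neq j$ forces all $\mu_i$ to coincide. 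Hence $A = \mu C$ for some real $\mu$, necessarily nonzero since $A$ is invertible.

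To pin down $\mu$ I would substitute $A = \mu C$ back into the global hypothesis $C \mathcal U A^{\ast} = \mathcal U$: it becomes the requirement that $(\mu - 1)\Id + \mu Y \in \U(l)$ for every $Y \in \U(l)$. Expanding $\big((\mu-1)\Id + \mu Y\big)\big((\mu-1)\Id + \mu Y\big)^{\ast} = \Id$ and simplifying gives
\[
\mu(\mu - 1)\,(Y + Y^{\ast}) = -2\mu(\mu - 1)\,\Id \quad\text{for every } Y \in \U(l).
\]
Testing with $Y = \Id$ shows this is impossible unless $\mu(\mu - 1) = 0$, so invertibility of $A$ forces $\mu = 1$ and therefore $A = C$.

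The whole argument is essentially mechanical once the two key observations are in place: that $0$ lies in $\mathcal U$, and that the $\C$-linearity of $\phi$ makes its differential at $0$ equal to $\phi$ itself, which reduces the set-theoretic hypothesis to a purely linear one on $\fru(l)$. The only mildly delicate point, which I expect to be the main place to be careful, is that preservation of the tangent space alone only determines $A$ up to a real scalar multiple of $C$; pinning down that scalar requires invoking the full global hypothesis, which is precisely what the final calculation does by exploiting that the Hermitian parts $Y + Y^{\ast}$ of arbitrary $Y \in \U(l)$ are not identically a scalar multiple of $\Id$.
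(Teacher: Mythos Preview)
Your proof is correct and takes a genuinely different route from the paper's. The paper linearizes the problem by applying the birational inversion $X\mapsto X^{-1}$, which carries $\Uu$ onto the affine subspace $\Ll=\tfrac12\Id+\fru(l)$; since $i(CXA^*)=A^{-*}i(X)C^{-1}$, the hypothesis becomes preservation of an affine space, and one finishes by evaluating the resulting affine identity at $\tfrac12\Id$ and then on $\fru(l)$. You linearize instead by observing that $0\in\Uu$ and that the tangent space there is $\fru(l)$, so the linear map $\phi$ must send $\fru(l)$ to itself; this yields $CXA^*=AXC^*$ for all $X$, and the matrix-unit specialization gives $A=\mu C$ with $\mu\in\R$, after which the global hypothesis forces $\mu=1$. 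The paper's inversion trick immediately produces a \emph{global} affine condition and avoids your final scalar-determination step, at the cost of the somewhat unexpected birational map; your tangent-space argument is more elementary and transparent, but pays for that with the extra step of going back to the full hypothesis to fix $\mu$. Both are clean; yours has the minor advantage that it would adapt verbatim to any situation where one knows a convenient fixed point of $\phi$ on $\Uu$.
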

\begin{proof}
Let us consider the birational map 
$$\begin{array}{cccc}i:&M(l\times l,\C)&\to &M(l\times l,\C)\\ &X&\mapsto &X^{-1}\end{array}$$
that is defined on a Zariski open subset $\Oo$ of $M(l\times l,\C)$.

The image, under the involution $i$, of $\Uu$ is the set $$\Ll=\{W|\;\Id-W^*-W=0\}=\frac 12\Id+\mathfrak u(l).$$ Moreover $i( CXA^{*})=A^{-*}i(X)C^{-1}$, hence in order to show that the subgroup preserving $\Uu$ consists precisely of the pairs $(A,A)$, it is enough to check that the subgroup of $\U(l)\times \GL_l(\C)$ preserving $\Ll$ consists precisely of the pairs $(A,A)$ with $A\in \U(l)$. 

This last statement amounts to show that the only matrix $B\in \GL_l(\C)$ such that $\Id-W^*B^{*}-BW=0$ for all $W\in \Ll$ is the identity itself.
Choosing $W$ to be $\frac 12 \Id$ we get that $B^*+B=2\Id$ hence in particular $B=\Id +Z$ with $Z\in \mathfrak u(l)$. Since moreover $\Ll=\{\frac 12 \Id+M|\;M\in \mathfrak{u}(l)\}$ we have to show that if $ZM+M^*Z^*=ZM+MZ=0$ for all $M$ in $\mathfrak{u}(l)$ then $Z$ must be zero, and this can be easily seen, for example by choosing $M$ to be the matrix that is zero everywhere apart from the $l$-th diagonal entry where it is equal to $i$.  
\end{proof}

We now have all the ingredients we need to prove the first crucial result of the section. Recall from Section \ref{sec:S_mn1} that every pair of transverse points $x,y$ in $\Ss_{m,n}$ uniquely determines an $m$-chain $T_{x,y}$ that is the unique chain that contains both $x$ and $y$.
\begin{prop}\label{prop:lifts of k-vertical chains}
Let $x\in \Ss_{m,n}$ be a point, $T$ be a chain with $i_x(T)=k$, $t\in T$ be a point,  $y=\pi_{x}(t)\in W_{x}$. Then
\begin{enumerate}
 \item $T$ is the unique lift of the $(m,k)$-circle $\pi_{x}(T)$ through the point $t$,
 \item for any point $t_1$ in $T_{x,t}=\pi_x^{-1}(y)$ there exists a unique $m$ chain through $t_1$ which lifts $\pi_x(T)$.

 \end{enumerate}
\end{prop}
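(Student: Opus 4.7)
The plan is to begin by exploiting the transitivity of $\SU(\C^{m+n},h)$ on triples $(x,t,T)$ with $t\in T$, $t\tra x$, and $i_x(T)=k$ established by Lemma \ref{lem:transitivity on k vertical chains}(2): this reduces both statements to the model configuration $(x,t,T)=(v_\infty,v_0,T_k)$, with $y=o$ and $\pi_x(T)=C_k$. For existence in (2), Lemma \ref{lem:vertical chains}(3) supplies a unique $\mu\in M$ with $\mu\cdot v_0=t_1$, and since $M$ acts trivially on $W_{v_\infty}$, the chain $\mu T_k$ passes through $t_1$ and still projects to $C_k$; moreover $i_{v_\infty}(\mu T_k)=k$ since $\mu\in Q$. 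Uniqueness in (2) will follow by applying the uniqueness statement of (1) to this translated chain, so the heart of the matter is proving (1).

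For uniqueness in (1), let $T'$ be any chain through $v_0$ with $\pi_{v_\infty}(T')=C_k$; the goal is to show $T'=T_k$. First, I would pin down the intersection index of $T'$: setting $j:=i_{v_\infty}(T')$, Lemma \ref{lem:transitivity on k vertical chains}(1) together with the $Q$-equivariance of $\pi_{v_\infty}$ places $\pi(T')$ in the $Q$-orbit of $C_j$, so $C_k$ and $C_j$ share the same real dimension, and since the explicit parametrization of Lemma \ref{lem:parametrization of k-vertical chains} gives $\dim_\R C_j=m^2-j^2$, this forces $j=k$. With the indices matched, Lemma \ref{lem:transitivity on k vertical chains}(2) produces $g\in G$ sending $(v_\infty,v_0,T_k)$ to $(v_\infty,v_0,T')$, so $g\in L$. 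The $L$-equivariance of $\pi_{v_\infty}$ then gives $g\cdot C_k=\pi(gT_k)=\pi(T')=C_k$, so $g\in L\cap\Stab(C_k)$. Since $L$ fixes $o$ and $Q=L\ltimes N$, Lemma \ref{lem:S_1} yields $L\cap\Stab(C_k)\subset L\cap(M\rtimes S_0)=S_0$; and Lemma \ref{lem:S_0} finally gives $g\in S_0\subset\Stab(T_k)$, whence $T'=gT_k=T_k$.

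The main obstacle is the intersection-index step. A priori $C_k$ records only the $W_{v_\infty}$-image of $T_k$ and forgets the vertical fiber, so nothing obvious prevents chains of different intersection indices from projecting onto the same set. What rules this out is the combination of the $Q$-orbit classification of chains of fixed intersection index with the clean dimension formula $\dim_\R C_j=m^2-j^2$: together they force $j=k$. Once the index is pinned down, the argument reduces to a mechanical matching of the stabilizer computations in Lemmas \ref{lem:S_0} and \ref{lem:S_1} against the Levi decomposition $Q=L\ltimes N$, and both parts of the proposition drop out.
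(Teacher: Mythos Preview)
Your proof is correct and follows essentially the same route as the paper's: reduce to the model triple $(v_\infty,v_0,T_k)$, use Lemma~\ref{lem:transitivity on k vertical chains}(2) to produce $g\in L$ carrying $T_k$ to the competing lift $T'$, and then conclude $g\in S_1\cap L=S_0\subset\Stab(T_k)$ via Lemmas~\ref{lem:S_0} and~\ref{lem:S_1}; part (2) then follows from (1) together with the transitivity of $M$ on the vertical fiber. The one substantive addition in your argument is the dimension count $\dim_\R C_j=m^2-j^2$ to force $i_{v_\infty}(T')=k$ before invoking Lemma~\ref{lem:transitivity on k vertical chains}(2); the paper's proof tacitly assumes this (or reads ``lift of an $(m,k)$-circle'' as ``$k$-vertical lift''), so your version is slightly more careful on this point but not a different approach.
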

\begin{proof}
(1) As a consequence of Lemma \ref{lem:transitivity on k vertical chains}, in order to prove the statements, we can assume that the triple $(x,t,T)$ is the triple $(v_\infty, v_0, T_k)$. Let $T'$ be another $m$-chain containing the point $t$ that lifts the $(m,k)$-circle $C_k$, a consequence of Lemma \ref{lem:transitivity on k vertical chains} is that there exists an element $g\in L$ such that  $(v_\infty,v_0,T_k)=g(v_\infty,v_0,T')$. Moreover, since $\pi_{v_\infty}(T')=C_k$, we get that $g\in S_1$. But we know that $S_1\cap L=S_0$ and this proves that $T'=T_k$.
\newline (2) This is a consequence of the first part, together with the observation that $M$ acts transitively on the vertical chain $T_{v_0,v_\infty}$.
\end{proof}
We conclude the section by determining what are the lifts of a point $y$ that are contained in an $m$-chain $T$. For every $m$-chain $T$ we consider the subgroup
$$M_T=\Stab_{M}(T).$$
Clearly if $t$ is a lift of a point $y\in W_{v_\infty}$ that is contained in $T$, then all the orbit $M_T\cdot t$ consists of lifts of $y$ that are contained in $T$.
We want to show that also the other containment holds, namely that the lifts are precisely the $M_T$ orbit of any point. 
\begin{lem} 
For the chain $T_k$ we have $M_{T_k}=i(E_k)$ where
$$E_k=\{X\in \mathfrak u(m)|\; X_{ij}=0 \text { if } i>k \text{ or } j>k\}=\left\{\bpm X_1&0\\0&0\epm\Big|\;X_1\in\fru(k)\right\},$$
and $i:\fru(m)\to N$ is the inclusion of the center of the group.
\end{lem}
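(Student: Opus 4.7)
The plan is to compute the stabilizer of $T_k$ inside $M$ directly, by combining the explicit parametrization of $T_k^{v_\infty}$ provided by Lemma \ref{lem:parametrization of k-vertical chains} with the explicit formula for the action of $N$ on $\Hh^{m,n}(v_\infty)$ given in Section \ref{ssec:Heisenberg}. Under the identification $M \cong \fru(m)$ via $i$, an element $F \in \fru(m)$ acts on the Heisenberg model by the vertical translation $(X,Y) \mapsto (X, Y+F)$, which preserves the projection $\pi_{v_\infty}$. Hence $i(F)$ stabilizes $T_k$ if and only if translation by $F$ maps $T_k^{v_\infty}$ to itself.

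Next I would read off from Lemma \ref{lem:parametrization of k-vertical chains} the Heisenberg coordinates $(X_H, Y_H)$ of a generic point of $T_k^{v_\infty}$ in terms of the parameters $(E, X', C)$, with $E \in M((m-k)\times k, \C)$, $X' \in \U(m-k)$, $C \in \fru(k)$. The coordinate $X_H$ depends only on $(E, X')$, so in order for $(X_H, Y_H + F)$ to still lie in $T_k^{v_\infty}$ the new parameters $(E, X')$ must coincide with the old ones. Thus the question reduces to: for which $F \in \fru(m)$ does $Y_H + F$ admit a presentation of the same form as $Y_H$ for some new $C' \in \fru(k)$?

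The decisive observation is that, once $(E, X')$ is fixed, the parameter $C$ contributes additively to $Y_H$ solely in the top-left $k \times k$ block, via the summand $\bigl(\begin{smallmatrix} C & 0 \\ 0 & 0\end{smallmatrix}\bigr)$, while all other blocks of $Y_H$ are rigidly determined by $(E, X')$. Consequently the fibre of the parametrization in the $Y$-direction is an affine translate of the subspace $E_k \subset \fru(m)$. Translation by $F$ preserves this affine translate precisely when $F \in E_k$, which yields both inclusions: for $F \in E_k$ the new parameter $C' := C + F_{11}$ still lies in $\fru(k)$, so $i(F) \in M_{T_k}$; conversely, if any block of $F$ outside the top-left $k\times k$ corner is nonzero, then $Y_H + F$ cannot be put into the required block form and $i(F) \notin M_{T_k}$.

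The main (and essentially only) potential obstacle is verifying the block-structure claim about $Y_H$ above, i.e.\ confirming that among the free parameters $(E, X', C)$ only $C$ contributes to the top-left $k\times k$ block of $Y_H$ and that it does so by pure antiHermitian translation. This is a direct calculation from the matrix displayed in Lemma \ref{lem:parametrization of k-vertical chains} together with the identity $X'^* + X' = X'^*X'$ used to verify that $\Id + X' \in \Uu$, so it is bookkeeping rather than a conceptual difficulty.
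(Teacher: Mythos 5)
Your argument is correct, but it takes a genuinely different route from the paper's. The paper proves the lemma by working with the orthogonal complement $V_k^\bot$: an element of $M$ stabilizes $T_k$ iff it stabilizes $V_k^\bot=\<e_{m+j}+e_{n+j+k},e_{l+m}\>$, and a one-line computation of $m\cdot(e_{m+j}+e_{n+j+k})$ shows that columns $k+1,\ldots,m$ of the matrix $F$ must vanish, which for $F\in\fru(m)$ is exactly the condition defining $E_k$. Your proof instead stays inside the Heisenberg model: you use that $M$ acts by pure vertical translation $(\tilde X,\tilde Y)\mapsto(\tilde X,\tilde Y+F)$, fix the $\tilde X$-coordinate of a point of $T_k^{v_\infty}$ (which pins down $(E,X')$), and then observe from the parametrization in Lemma \ref{lem:parametrization of k-vertical chains} that the $\tilde Y$-fibre is the affine translate
$$\tilde Y=\bpm C&0\\0&0\epm+\bpm 0&\tfrac12 E^*(X'-\Id)\\ \tfrac12(\Id-X'^*)E&\tfrac12(X'-X'^*)\epm,\qquad C\in\fru(k),$$
so that translation by $F$ preserves the fibre iff $F\in E_k$. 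Both are short; the paper's is slightly more compact since it avoids unpacking $\tilde Y$ from the parametrized basis, whereas yours makes the geometric picture (the $\tilde Y$-fibres over $T_k^{v_\infty}$ are cosets of $E_k$) more visible and directly reuses the explicit parametrization rather than recomputing $V_k^\bot$. One small slip to fix in your write-up: the identity you invoke should be $X'^*X'=\Id$ (i.e.\ $X'\in\U(m-k)$), or equivalently $(\Id+X')^*+(\Id+X')=(\Id+X')^*(\Id+X')$; the equation $X'^*+X'=X'^*X'$ holds for $\Id+X'$, not for $X'$ itself. You should also note, if you want to be complete, that it suffices to check preservation of the Zariski open dense subset $T_k^{v_\infty}$, since $i(F)$ fixes $v_\infty$ and hence preserves the transversality locus, and $T_k$ is irreducible.
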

\begin{proof}
We already observed that the orthogonal to $V_k$ is $$V_k^\bot=\<e_{m+j}+e_{n+j+k},e_{l+m}|\;1\leq j\leq m-k< l\leq n-m\>.$$ Moreover an element of $M$ stabilizes $T_k$ if and only if it stabilizes $V_k^\bot$.  
If now $m=\bsm\Id&0&E\\0&\Id&0\\0&0&\Id\esm$ is an element of $M$, then the image $m\cdot(e_{m+j}+e_{n+j+k})=\sum E_{ij}e_{j+k}+e_{m+j}+e_{n+j+k}$ that belongs to $V_k^\bot$ if  and only if the $(j+k)$-th column of the matrix $E$ is zero. This implies that the subgroup of $M$ that fixes $V_k^\bot$ is contained in $i(E_k)$. Viceversa it is easy to check that $i(E_k)$ belongs to $\SU(V_k)$, in particular it preserves $T_k$.
\end{proof}

We denote  by $Z_T$ the intersection of the linear subspace $V_T$ underlying $T$ with $v_\infty$:
$$Z_T=v_\infty\cap V_T.$$
In the standard case in which $T=T_k$ we will denote by $Z_k$ the subspace $Z_{T_k}$ which equals to the span of the first $k$ vectors of the standard basis of $\C^m$.
\begin{prop}\label{prop:errors}
 Let $T$ be a $k$-vertical chain, then
 \begin{enumerate}
  \item If $g\in Q$ is such that $gT=T_k$, then $M_T=g^{-1}M_{T_k}g$.
  \item For any point $x\in T$, we have $\pi_{v_\infty}^{-1}(\pi_{v_\infty}(x))\cap T=M_Tx$.
  \item If $n\in N$, then $M_{nT}=M_T$.
  \item If $a\in \GL(m)$ is such that $a(Z_T)=Z_k$, then $M_T=i(a^{-1}E_k a^{-*})$.
 \end{enumerate}
\end{prop}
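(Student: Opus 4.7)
The key structural fact I would use throughout is that $M$ is the centre of $N$, and $N$ is the unipotent radical of the parabolic $Q$. As the centre is a characteristic subgroup, $M$ is normal in $Q$, and centralized by $N$. With this in hand I would prove the four assertions in the order $(1)$, $(3)$, $(2)$, $(4)$.

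For $(1)$, since $M\lhd Q$, conjugation by $g\in Q$ preserves $M$, so for $m\in M$ we have $m\in M_T$ iff $gmg^{-1}$ stabilizes $gT=T_k$, iff $gmg^{-1}\in M_{T_k}$. For $(3)$, centrality gives $n^{-1}mn=m$ for $n\in N$ and $m\in M$, whence $m(nT)=n(mT)$; this equals $nT$ iff $mT=T$, so $M_{nT}=M_T$. For $(2)$, the inclusion $M_Tx\subseteq \pi_{v_\infty}^{-1}(\pi_{v_\infty}(x))\cap T$ is immediate, since $M_T$ preserves $T$ and $M$ acts trivially on the first Heisenberg coordinate. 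Conversely, given $x'$ in the right-hand side, Lemma \ref{lem:vertical chains}(3) produces a unique $m\in M$ with $mx=x'$; the chain $mT$ then has the same $\pi_{v_\infty}$-projection as $T$ (because $M$ preserves projections) and shares the point $x'=mx$ with $T$, so by the uniqueness of lifts proved in Proposition \ref{prop:lifts of k-vertical chains}(1) we get $mT=T$, i.e.\ $m\in M_T$.

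For $(4)$, I would combine $(1)$ with the Levi decomposition. By Lemma \ref{lem:transitivity on k vertical chains} there exists $g\in Q$ with $gT=T_k$; write $g=nl$ with $n\in N$ and $l=(A,C)\in L$. Since $i(E_k)\subseteq M$ is centralized by $N$,
\[M_T=g^{-1}M_{T_k}g=l^{-1}n^{-1}i(E_k)nl=l^{-1}i(E_k)l.\]
A direct matrix computation in the realization of $L$ introduced in \S\ref{ssec:Heisenberg} shows that $(A,C)\,i(E)\,(A,C)^{-1}=i(AEA^*)$ for every $E\in\fru(m)$, so $M_T=i(A^{-1}E_kA^{-*})$. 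To rewrite this in terms of an arbitrary $a$ with $aZ_T=Z_k$, I would then observe that $g\in Q$ preserves $v_\infty$ and maps $V_T$ to $V_{T_k}$, hence $gZ_T=Z_k$; but the action of $g$ on $v_\infty$ coincides with the action of $A$, so $AZ_T=Z_k$. Thus $Aa^{-1}$ stabilizes $Z_k$, i.e.\ in a basis adapted to a splitting $v_\infty=Z_k\oplus Z_k'$ it is block upper-triangular, and a short block computation shows that any such $P$ satisfies $PE_kP^*=E_k$. This yields $A^{-1}E_kA^{-*}=a^{-1}E_ka^{-*}$ and completes the proof.

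The step I expect to be the most delicate is the final identification in $(4)$: one must recognize that $E_k$ is invariant (as a set) under the conjugation $X\mapsto PXP^*$ for every $P\in\GL_m(\C)$ stabilizing $Z_k$. This reduces, after using the block shape of $E_k$, to the elementary observation $\alpha\,\fru(k)\,\alpha^*=\fru(k)$ for every $\alpha\in\GL_k(\C)$; the slight subtlety is that $Aa^{-1}$ need not preserve any chosen complement of $Z_k$, only $Z_k$ itself, so the argument has to be phrased purely in terms of upper-triangularity. Everything else is a formal consequence of the centrality of $M$ in $N$ combined with the uniqueness-of-lifts statement already established in Proposition \ref{prop:lifts of k-vertical chains}.
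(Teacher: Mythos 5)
Your proof is correct, and parts (1), (3) and (4) follow essentially the same route as the paper: (1) is normality of $M$ in $Q$, (3) is centrality of $M$ in $N$, and for (4) you and the paper both conjugate $E_k$ by the Levi part and then reduce the well-definedness to the observation that $P E_k P^* = E_k$ whenever $P$ stabilizes $Z_k$ (you handle the $N$-part of $g$ by centrality, the paper first invokes (3) to arrange $g\in L$ — a cosmetic difference). The one place where your argument genuinely diverges from the paper's is part (2). The paper first treats the standard chain $T_k$, where the claim can be read off directly from the explicit parametrization in Lemma \ref{lem:parametrization of k-vertical chains}, and then transports the statement to a general $T$ via $Q$-transitivity and the $Q$-equivariance of $\pi_{v_\infty}$. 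You instead argue intrinsically: the simple transitivity of $M$ on the vertical chain $T_{v_\infty,x}$ (Lemma \ref{lem:vertical chains}(3)) produces the candidate $m$ with $mx=x'$, and the uniqueness of lifts in Proposition \ref{prop:lifts of k-vertical chains}(1) forces $mT=T$. This is a cleaner argument in that it avoids computing with the explicit parametrization of $T_k$; its cost is that it leans on Proposition \ref{prop:lifts of k-vertical chains}, whereas the paper's argument for (2) uses only Lemma \ref{lem:parametrization of k-vertical chains} and the transitivity lemma. Both are valid, and your version makes the structural reason for the statement (uniqueness of lifts plus simple transitivity of $M$ on vertical fibres) more visible.
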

 \begin{proof}
(1) This follows from the definition of $M_{T_k}$ and $M_T$ and the fact that $M$ is normal in $Q$.
\newline (2) Let us first consider the case $T=T_k$. In this case the statement is an easy consequence of the explicit parametrization of the chain $T_k$ we gave in Lemma \ref{lem:parametrization of k-vertical chains}: any two points in $T_k$ that have the same projection are in the same $M_{T_k}$ orbit. The general case is a consequence of the transitivity of $Q$ on $k$-vertical chains: let $g\in Q$ be such that $gT=T_k$ and let us denote by $y$ the point $gx$. Then we know that 
$M_{T_k}y=\pi^{-1}_{v_\infty}(\pi_{v_\infty}(y))\cap T_k$. This implies that
$$\begin{array}{rl}
   M_Tx&= g^{-1}M_{T_k}g x=g^{-1}(M_{T_k}y)=g^{-1}( \pi^{-1}_{v_\infty}(\pi_{v_\infty}(y))\cap T_k)=\\
   &=g^{-1} \pi^{-1}_{v_\infty}(\pi_{v_\infty}(y))\cap g^{-1}T_k=\\
   &=\pi^{-1}_{v_\infty}(\pi_{v_\infty}(x))\cap T.
  \end{array}
$$
Where in the last equality we used that the $Q$ action on $\Hh_{m,n}(v_\infty)$ induces an action of $Q$ on $W_{v_\infty}$ so that the projection $\pi_{v_\infty}$ is $Q$ equivariant.
\newline (3) This is a consequence of the fact that $M$ is in the center of $N$: $M_{nT}=nM_Tn^{-1}=M_T$.
\newline (4) By (3) we can assume that $T$ is a chain through the point $v_0$: indeed there exists always an element $n\in N$ such that $nT$ contains $v_0$, moreover both $M_{nT}=M_T$ and $Z_{nT}=Z_T$ (the second assertion follows from the fact that any element in $N$ acts trivially on $v_\infty$).

Since $v_0\in T$ and we proved in Lemma \ref{lem:transitivity on k vertical chains} that $L$ is transitive on $k$-vertical chains through $v_0$, we get that there exists a pair $(C,A)\in \U(n-m)\times \GL_m(\C)$ such that, denoting by $g$ the corresponding element in $L$, we have $gT=T_k$. It follows from (1) that $M_T=g^{-1}M_{T_k} g$, in particular we have
$$\bpm A^{-1}&0&0\\0&C^{-1}&0\\0&0&A^*\epm\bpm\Id&0&E\\0&\Id&0\\0&0&\Id\epm\bpm A&0&0\\0&C&0\\0&0&A^{-*}\epm=\bpm\Id&0&A^{-1}EA^{-*}\\0&\Id&0\\0&0&\Id\epm$$
and hence the subgroup $M_T$ is the group $i(A^{-1}E_kA^{-*})$. Moreover, since $gT=T_k$ we have in particular that $gZ_T=Z_k$ and hence $A(Z_T)=Z_k$ if we consider $Z_T$ as a subspace of $v_\infty$.

In order to conclude the proof it is enough to check that for every $a\in \GL_m(\C)$ with $a(Z_T)=Z_k$ the subgroups $a^{-1}E_k a^{-*}$ coincide. Indeed it is enough to check that for every element $a\in \GL_m(\C)$ such that $a(Z_k)=Z_k$ then $a^{-1}E_k a^{-*}=E_k$. But if $a$ satisfies this hypothesis, the matrix $a^{-*}$ has the form $\bsm A_1&0\\A_2&A_3\esm$. In particular we can compute:
$$a^{-1}Xa^{-*}=\bpm A_1^*&A_2^*\\0&A_3^*\epm\bpm X_1&0\\0&0\epm\bpm A_1&0\\A_2&A_3\epm=\bpm A_1^*X_1A_1&0\\0&0\epm$$
and the latter matrix still belongs to $E_k$.
\end{proof}

\section{The restriction to a chain is rational}\label{sec:chain}
In this section we prove that the chain geometry defined in Section \ref{sec:S_mn} is rigid in the following sense:
\begin{thm}\label{thm:restriction rational}
 Let $\phi:\deH^p\to \Ss_{m,n}$ be a measurable, chain geometry preserving, Zariski dense map. Then for almost every chain $C$ in $\deH^p$ the restriction $\phi|_{C}$ coincides almost everywhere with a rational map.
\end{thm}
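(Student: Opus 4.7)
I would proceed in three stages. First, for almost every chain $C\subset\deH^p$, show that $\phi(C)$ is contained in a unique $m$-chain $T_C\subset\Ss_{m,n}$. Second, pass to the Heisenberg model so that $\phi|_C$ becomes a measurable map $f\colon\R\to\fru(m)$ which is monotone with respect to the natural positivity cone in $\fru(m)$. Third, and most delicately, upgrade cone-monotonicity to rationality; this is the main obstacle.

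For the first stage, I would apply Fubini to the chain-preservation hypothesis. For a.e.\ chain $C$ and a.e.\ pair $(x,y)\in C\times C$, transversality preservation gives a unique $m$-chain $T_{\phi(x),\phi(y)}$ through $\phi(x)$ and $\phi(y)$ (Section~\ref{sec:S_mn1}); chain preservation then forces $(\phi(x),\phi(y),\phi(z))$ to be a maximal triple for a.e.\ third point $z\in C$, hence to lie in some $m$-chain by Proposition~\ref{prop:bergmann}(2), which by uniqueness must be $T_{\phi(x),\phi(y)}$. Independence on the pair $(x,y)$ is immediate since two transverse points of an $m$-chain determine it, so one obtains a measurable assignment $C\mapsto T_C$ with $\phi(C)\subset T_C$ a.e. For the second stage, I pick a basepoint $x_0\in C$ at which $\phi$ is defined; by Lemma~\ref{lem:transitivity on k vertical chains} and the transitivity of $\SU(1,p)$ on pairs (chain, point on it), I may normalize $x_0=v_\infty\in\deH^p$, $\phi(x_0)=v_\infty\in\Ss_{m,n}$, and $T_C$ to be the standard vertical $m$-chain through $v_\infty$. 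In the Heisenberg model $\Hh^{1,p}(v_\infty)$ the punctured chain $C\setminus\{x_0\}$ is identified with the real axis, while by Lemma~\ref{lem:vertical chains} the punctured $T_C$ is the orbit of $v_0$ under the center $M\cong\fru(m)$ of the nilpotent radical; hence $\phi|_C$ descends to a measurable $f\colon\R\to\fru(m)$. An explicit computation of the Bergmann cocycle on triples of a vertical $m$-chain, combined with Corollary~\ref{cor:incidence preserved}, then translates maximality preservation into cone-monotonicity: for a.e.\ $t_1<t_2$, the difference $f(t_2)-f(t_1)$ lies in the open cone $\{iH\mid H\text{ positive-definite Hermitian}\}\subset\fru(m)$.

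\textbf{Main obstacle.} The essential difficulty is the third stage. Cone-monotone measurable functions into $\fru(m)$ are generically only a.e.\ differentiable and continuous at their continuity points, so rationality cannot be extracted from restricting $\phi$ to $C$ alone. One must use chain preservation globally. Every other chain $C'\subset\deH^p$ meeting $C$ at a single point $x$ yields an $m$-chain $T_{C'}\subset\Ss_{m,n}$ meeting $T_C$ at $\phi(x)$; by Proposition~\ref{prop:errors} the possible configurations of two $m$-chains sharing a point are tightly constrained by their mutual stabilizers inside $M$. Letting $C'$ range while keeping a foot on $C$ should produce enough functional equations on $f$ to force it to agree a.e.\ with a rational map. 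The technical heart will be translating the chain-incidence data developed in Section~\ref{sec:S_mn} into explicit polynomial constraints on $f$; I expect the structural analysis of $(m,k)$-circles and of the projections $\pi_x$ to play the decisive role, with the projection formulae of Lemma~\ref{lem:parametrization of k-vertical chains} providing the explicit algebraic relations one needs.
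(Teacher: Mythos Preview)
Your first stage is essentially the paper's Lemma~\ref{lem:1} and Corollary~\ref{cor:map on chains}, and is correct. But stages 2 and 3 diverge from the paper's argument and contain a genuine gap.

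The cone-monotonicity you extract in stage 2 is true but is a dead end: monotone measurable maps $\R\to\fru(m)$ are generically only of bounded variation, and the vague ``functional equations'' you hope for in stage 3 are never made concrete. Letting auxiliary chains $C'$ touch $C$ at a single point does not, by itself, produce polynomial constraints on $f$; the incidence data you get relate values of $\phi$ at \emph{different} points, not values and derivatives of $f$ at the \emph{same} point, so there is no visible mechanism forcing rationality.

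The paper's approach is dual to yours: instead of fixing a chain $C$ and a basepoint on it, one fixes a \emph{generic point} $x\in\deH^p$ and studies all vertical chains through $x$ simultaneously. The center $M_x\cong\fru(1)$ of the Heisenberg group acts simply transitively on each such chain, and one defines a measurable cocycle
\[
\alpha\colon M_x\times(\deH^p)^x\longrightarrow M_{\phi(x)}\cong\fru(m),\qquad \alpha(e,z)\,\phi(z)=\phi(ez).
\]
The point is then to show that $\alpha(e,z)$ depends only on the vertical chain $\pi_x(z)$, not on $z$ itself (Proposition~\ref{prop:homom>algebraic}). Once this is done, the restriction of $\alpha$ to each vertical chain is a measurable homomorphism $\fru(1)\to\fru(m)$, hence automatically linear, and $\phi$ restricted to that chain is rational.

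The mechanism for proving $\alpha(e,z_1)=\alpha(e,z_2)$ when $z_1,z_2$ lie on the same vertical chain is the unique-lift property of Proposition~\ref{prop:lifts of k-vertical chains}. For a generic auxiliary point $w$, the chains $T_{z_i,w}$ and their translates $eT_{z_i,w}$ project to the \emph{same} $(m,k)$-circle in $W_{\phi(x)}$; since $\hat\phi(eT_{z_i,w})$ is the unique lift of that circle through $\phi(ez_i)=\alpha(e,z_i)\phi(z_i)$, one obtains $\alpha(e,z_i)-\alpha(e,w)\in M_{\hat\phi(T_{z_i,w})}$. When $n\geq 2m$ one can choose $w$ so that both $T_{z_i,w}$ map to $0$-vertical $m$-chains, making these error groups trivial and giving $\alpha(e,z_1)=\alpha(e,w)=\alpha(e,z_2)$ directly. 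When $m<n<2m$ a single $w$ is not enough; one needs the error-subgroup calculus of Proposition~\ref{prop:errors} and several auxiliary points $w_j$, chosen via Zariski density so that the associated ``information'' subspaces $I(\phi(w_j))$ span $\fru(m)$, to force the intersection of the errors down to zero. Your proposal gestures toward Proposition~\ref{prop:errors} but misses the cocycle framework that makes it effective; in particular, the role of the \emph{horizontal} auxiliary chains $T_{z_i,w}$ (not chains through the center point) is the key geometric input you are lacking.
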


Let us recall that, whenever a point $x\in  \Ss_{m,n}$ is fixed, the center $M_x$ of the nilpotent radical $N_x$ of the stabilizer $Q_x$ of $x$ in $\SU(\C^{m+n}, h)$ acts on the Heisenberg model $\Hh_{m,n}(x)$. Moreover, for every $m$-chain $T$ containing the point $x$, the $M_x$ action is simply transitive on the Zariski open subset $T^x$ of $T$.
The picture above is true for both $\deH^p\cong \Ss_{1,p}$ and $\Ss_{m,n}$ where, if $x\in \deH^p$, the group $M_x$ can be identified with $\fru(1)$, and, if $\phi:\deH^p\to \Ss_{m,n}$ is the boundary map, $M_{\phi(x)}\cong \fru(m)$.

The idea of the proof is to show that, for almost every point $x\in \deH^p$, the boundary map is equivariant with respect to a measurable homomorphism $h: M_x\to  M_{\phi(x)}$. Since such homomorphism must be algebraic, we get that the restriction of $\phi$ to almost every chain through $x$ must be algebraic.

In order to define the homomorphism $h$ we will prove first that a map $\phi$ satisfying our assumptions induces a measurable map $\phi_x:W_x\to W_{\phi(x)}$. Here $W_x$ can be identified with $\C^{p-1}$ and $W_{\phi(x)}$ can be identified with $M((n-m)\times m,\C)$, both these identifications are non canonical but we fix them once and forall. We will then use the map $\phi_x$ to define a cocycle  $\alpha:M_x\times (\deH^p)^x\to M_{\phi(x)}$ with respect to which $\phi$ is equivariant. We will then show that $\alpha$ is independent on the point $x$ and hence coincides almost everywhere with the desired homomorphism.

\subsection{First properties of chain preserving maps} 
Recall from Section \ref{sec:Toledo} that a map $\phi$ is Zariski dense if the essential Zariski closure of $\phi(\deH^p)$ is the whole $\Ss_{m,n}$, or, equivalently if the preimage under $\phi$ of any proper Zariski closed subset of $\Ss_{m,n}$ is not of full measure. Moreover, by definition, a measurable boundary map \emph{preserves the chain geometry} if the image under $\phi$ of almost every  pair of distinct points is a pair of transverse points, and the image of almost every maximal triple $(x_0,x_1,x_2)$ in $(\deH^p)^3$, is contained in an $m$-chain.

We will denote by $\Tt_1$ the set of chains in $\deH^p$, and by $\Tt_m$ the set of $m$-chains of $\Ss_{m,n}$. The set $\Tt_1$ is a smooth manifold, indeed an open subset of the Grassmannian $\Gr_{2}(\C^{p+1})$, and we will endow $\Tt_1$ with its Lebesgue measure class. 

The following lemma, an application in this context of Fubini's theorem, gives the first property of a chain geometry preserving map:
\begin{lem}\label{lem:1}
Let $\phi:\deH^p\to \Ss_{m,n}$ be a chain geometry preserving map. For almost every chain $C\in\Tt_1$ there exists an $m$-chain $\hat\phi(C)\in \Tt_m$ such that, for almost every point $x$ in $C$, $\phi(x)\in\hat\phi(C)$.
\end{lem}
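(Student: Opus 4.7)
The plan is to prove this by two successive applications of Fubini's theorem, exploiting the fact that maximal triples in $(\deH^p)^3$ fiber naturally over the space $\Tt_1$ of chains.

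First I would record two full-measure sets in low-dimensional products. Let $B\subset(\deH^p)^2$ be the set of pairs $(x,y)$ with $\phi(x)\tra\phi(y)$, and let $A\subset(\deH^p)^3$ be the set of maximal triples $(x_0,x_1,x_2)$ for which $(\phi(x_0),\phi(x_1),\phi(x_2))$ is contained in an $m$-chain. The hypothesis that $\phi$ preserves the chain geometry gives that $B$ has full measure in $(\deH^p)^2$ and that $A$ has full measure among maximal triples. Since every maximal triple lies on a unique chain, the space of maximal triples fibers (measurably) over $\Tt_1$ with fiber over $C$ equal to $C^3$ (minus a negligible set of degenerate triples). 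Fubini then supplies a full-measure subset $\Tt_1'\subset\Tt_1$ such that, for every $C\in\Tt_1'$, the set $B\cap C^2$ is conull in $C^2$ and the set $A\cap C^3$ is conull in $C^3$.

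Fix such a chain $C\in\Tt_1'$. Applying Fubini a second time to $A\cap C^3$, viewed as a subset of $C^2\times C$, I obtain a conull set of pairs $(y_0,y_1)\in C^2$ such that the fiber $\{x\in C:(y_0,y_1,x)\in A\}$ is conull in $C$. Intersecting with $B\cap C^2$, I can select a pair $(y_0,y_1)\in C^2$ satisfying both $\phi(y_0)\tra\phi(y_1)$ and the fiber property above. Since $\phi(y_0)$ and $\phi(y_1)$ are transverse points of $\Ss_{m,n}$, there is a unique $m$-chain $T_{\phi(y_0),\phi(y_1)}$ containing them; I define
\[
\hat\phi(C):=T_{\phi(y_0),\phi(y_1)}\in\Tt_m.
\]

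To conclude, for almost every $x\in C$ the triple $(y_0,y_1,x)$ lies in $A$, so $(\phi(y_0),\phi(y_1),\phi(x))$ is contained in some $m$-chain $T_x$. But any $m$-chain containing the transverse pair $(\phi(y_0),\phi(y_1))$ must equal $T_{\phi(y_0),\phi(y_1)}=\hat\phi(C)$ by the uniqueness discussed in Section~\ref{sec:S_mn1}. Hence $\phi(x)\in\hat\phi(C)$ for almost every $x\in C$. The only subtle point is the interplay between the three different measure classes on $(\deH^p)$, $\Tt_1$, and $C$, but this is handled routinely once the fibration of maximal triples over $\Tt_1$ is identified; no genuine obstacle arises.
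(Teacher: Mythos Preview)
Your argument is correct and follows essentially the same approach as the paper: both identify the space of maximal triples as a bundle over $\Tt_1$ and apply Fubini to conclude that for almost every chain $C$ almost every triple in $C^3$ is mapped into a common $m$-chain. Your version is simply more explicit about how to pin down $\hat\phi(C)$ --- by choosing a transverse pair $(y_0,y_1)$ and invoking the uniqueness of $T_{\phi(y_0),\phi(y_1)}$ --- whereas the paper leaves this step implicit.
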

\begin{proof}
 There is a bijection between the set $(\deH^p)^{\{3\}}$ consisting of triples of distinct points on a chain and the set 
 $$\Tt_1^{\{3\}}=\{(C,x,y,z)|\;C\in \Tt_1\text{ and } (x,y,z)\in C^{(3)}\}.$$
 In turn the projection onto the first factor endows the manifold $\Tt_1^{\{3\}}$ with the structure of a smooth bundle over $\Tt_1$. In particular Fubini's theorem implies that, for almost every chain $C\in \Tt_1$ and for almost every triple $(x,y,z)\in C^3$,  the triple $(\phi(x),\phi(y),\phi(z))$ belongs to the same $m$-chain $\hat \phi(C)$. 
 Moreover $\hat \phi(C)$ has the desired properties again as a consequence of Fubini theorem.
 \end{proof}
 
 We can now use the fact that each pair of transverse points $a,b$ in $\Ss_{m,n}$ uniquely determines a chain $T_{a,b}$ to reformulate
Lemma \ref{lem:1} in the following way:
\begin{cor}\label{cor:map on chains}
 Let $\phi:\deH^p\to \Ss_{m,n}$ be a measurable chain preserving map. Then there exists a measurable map $\hat \phi:\Tt_1\to \Tt_m$ such that, for almost every pair $(x,T)\in \deH^p\times \Tt_1$ with $x\in T$, then $\phi(x)\in \hat\phi(T)$.  
\end{cor}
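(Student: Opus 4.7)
The plan is to upgrade the a.e.-defined assignment $C\mapsto \hat\phi(C)$ from Lemma \ref{lem:1} into a genuine measurable map. I would do this in two steps: first show that $\hat\phi(C)$ is uniquely determined by $C$ whenever it exists, and then exhibit a measurable function on $\Tt_1$ that agrees with it almost everywhere.

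For uniqueness, I would observe that if $T$ is any $m$-chain containing $\phi(x)$ for almost every $x\in C$, then by the chain-preserving hypothesis we may select a pair $x,y\in C$ in the full-measure set of Lemma \ref{lem:1} for $C$ whose $\phi$-images are transverse and both lie in $T$. Since, as recalled in Section \ref{sec:S_mn1}, two transverse points of $\Ss_{m,n}$ determine a unique $m$-chain, $T$ is forced to equal $T_{\phi(x),\phi(y)}$, proving that $\hat\phi(C)$ is intrinsically defined.

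For measurability, I would pick a measurable section $(\sigma_1,\sigma_2)\colon \Tt_1\to (\deH^p)^2$ of the smooth locally trivial bundle
\[
\{(C,x,y)\in \Tt_1\times(\deH^p)^2 \mid x,y\in C,\ x\neq y\}\longrightarrow \Tt_1,
\]
and set $\Psi(C):=T_{\phi(\sigma_1(C)),\phi(\sigma_2(C))}$, which is well defined wherever $\phi(\sigma_1(C))\tra\phi(\sigma_2(C))$. Since the map $(a,b)\mapsto T_{a,b}$ from $\Ss_{m,n}^{(2)}$ to $\Tt_m$ is algebraic, hence Borel, $\Psi$ is Borel measurable. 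A Fubini argument in the above bundle, combined with the transversality part of the chain-geometry hypothesis and the conclusion of Lemma \ref{lem:1}, guarantees that for almost every $C$ both points $\sigma_i(C)$ are good for $C$ in the sense of Lemma \ref{lem:1} and $\phi(\sigma_1(C))\tra \phi(\sigma_2(C))$. The uniqueness step then forces $\Psi(C)=\hat\phi(C)$ for a.e.\ $C$, so $\hat\phi$ is measurable.

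The only mildly technical point is the existence of the measurable section, but this is standard since the fiber bundle is smooth and locally trivial (and in any case one can invoke Kuratowski--Ryll-Nardzewski). There is no deeper obstacle: the statement is essentially a packaging of Lemma \ref{lem:1} together with the fact that an $m$-chain is algebraically recoverable from any transverse pair of its points.
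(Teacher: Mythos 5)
The overall strategy matches the paper's: both reduce measurability of $\hat\phi$ to the algebraicity of $(a,b)\mapsto T_{a,b}$ and use Lemma \ref{lem:1}. Your uniqueness step is fine and a useful addition. However, there is a genuine gap in the measurability step.

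You pick a measurable section $\sigma=(\sigma_1,\sigma_2)$ of the bundle $\{(C,x,y):x,y\in C,\,x\neq y\}\to\Tt_1$ and claim that ``a Fubini argument in the above bundle\ldots guarantees that for almost every $C$ both points $\sigma_i(C)$ are good for $C$.'' That inference is false in general. Fubini gives you that the set $\Gg$ of triples $(C,x,y)$ with $x,y$ good for $C$ and $\phi(x)\tra\phi(y)$ has full measure in the total space of the bundle; but the image of \emph{any} section of the bundle is a measure-zero subset of the total space, so full measure of $\Gg$ says nothing about whether $\sigma(C)\in\Gg$ for a.e.\ $C$. (Concretely: if the fiberwise bad set is precisely the graph of $\sigma$, then $\sigma$ is bad for every $C$, yet the bad set in the total space still has measure zero.)

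The fix is to choose $\sigma$ more carefully: apply a measurable selection theorem not to the full bundle but to the measurable set-valued map $C\mapsto\Gg_C$, which has nonempty (indeed full-measure) fiber for a.e.\ $C$ by Lemma \ref{lem:1} and the transversality hypothesis. Equivalently, work in a countable cover of $\Tt_1$ by local trivializations $U\times F$, and use Fubini on $U\times F$ to find, for each chart, a \emph{fixed} point $(p_0,q_0)\in F^{(2)}$ whose constant section is a.e.\ good on $U$; then patch. Either way the rest of your argument (algebraicity of $T_{\cdot,\cdot}$, hence Borel measurability of $\Psi$, and identification with $\hat\phi$ by uniqueness) goes through.
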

\begin{proof} 
 The only thing that we have to check is that the map $\hat \phi$ is measurable, but this follows from the fact that the map associating to a pair $(x,y)\in \Ss_{m,n}^{(2)}$ the $m$-chain $T_{x,y}$ is algebraic.
\end{proof}
Recall that, if $x$ is a point in $\deH^p$, we denote by $W_x$ the set of chains through $x$. We use the identification of $W_x$ as subvariety of $\Gr_2(\C^{1,p})$ to endow the space $W_x$ with its Lebesgue measure class.
\begin{cor}\label{cor:point-chain}
For almost every $x\in\deH^p$, almost every chain in $W_x$ satisfies Lemma \ref{lem:1}. 
\end{cor}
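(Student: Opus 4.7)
The plan is a straightforward Fubini argument on the incidence variety
\[
 I=\{(x,C)\in\deH^p\times\Tt_1\,|\;x\in C\}.
\]
The projection $\pi_2:I\to\Tt_1$ onto the second factor is a smooth fibre bundle whose fibre over $C$ is (canonically) $C$ itself; the projection $\pi_1:I\to\deH^p$ onto the first factor is a smooth fibre bundle whose fibre over $x$ is, by definition, $W_x$. Both $\Tt_1$ and $\deH^p$ carry natural Lebesgue measure classes, and the measure class on $I$ induced by either bundle structure coincides.

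First I would denote by $\Tt_1^{\rm good}\subset \Tt_1$ the set of chains provided by Lemma \ref{lem:1}, i.e.\ those $C$ for which there exists an $m$-chain $\hat\phi(C)$ containing $\phi(x)$ for almost every $x\in C$. By Lemma \ref{lem:1}, the complement $\Tt_1\setminus\Tt_1^{\rm good}$ has measure zero in $\Tt_1$. Since the fibres of $\pi_2$ are chains (which are compact and of finite one-dimensional measure), the preimage
\[
 \pi_2^{-1}(\Tt_1\setminus\Tt_1^{\rm good})\subset I
\]
has measure zero in $I$.

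Now I apply Fubini in the other direction. The measure of $\pi_2^{-1}(\Tt_1\setminus\Tt_1^{\rm good})$ computed along the bundle $\pi_1$ is the integral over $x\in\deH^p$ of the $W_x$-measure of
\[
 \{C\in W_x\,|\;C\notin\Tt_1^{\rm good}\}.
\]
Since this integral vanishes, for almost every $x\in\deH^p$ the set of chains in $W_x$ that fail the conclusion of Lemma \ref{lem:1} has measure zero in $W_x$, which is exactly the statement.

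There is essentially no obstacle here: the only thing to be careful about is that the incidence space $I$ is indeed a smooth bundle over both factors with compatible measure classes, which is clear from the description of $\Tt_1$ as an open subset of $\Gr_2(\C^{p+1})$ and from the transitivity of $\SU(1,p)$ on flags $(x,C)$ with $x\in C$.
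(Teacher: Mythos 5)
Your proof is correct and follows essentially the same Fubini argument on the incidence variety $I=\{(x,C):x\in C\}$ (the paper's $\Tt^{\{1\}}$), fibering it in both directions to transfer the full-measure conclusion from $\Tt_1$ down to the fibers $W_x$. Your formulation — pulling back a null set under $\pi_2$ and then applying Fubini along $\pi_1$ — is slightly cleaner than the paper's, which routes through the bundle $\Tt_1^{\{3\}}\to\Tt^{\{1\}}$ first, but the substance is identical.
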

\begin{proof}
 It is again an application of Fubini's theorem. Let us indeed consider the manifold $\Tt^{\{1\}}=\{(C,x)|\;C\in\Tt,\,x\in C\}$ the projection on the first two factor $\Tt^{\{3\}}\to\Tt^{\{1\}}$ realizes the first manifold as a smooth bundle over the second with fiber $(\R\times\R)\backslash \Delta$. In particular for almost every pair in $\Tt^{\{1\}}$ the chain satisfies the assumption of Corollary \ref{cor:map on chains}. Since $\Tt^{\{1\}}$ is a bundle over $\deH^p$ with fiber $W_x$ over $x$, the statement follows applying Fubini again.
\end{proof}

We will call a point $x$ that satisfies the hypotheses of Corollary \ref{cor:point-chain} \emph{generic} for the map $\phi$. Let us now fix, for the rest of the section, a point $x$ that is generic for the map $\phi$ and consider the diagram
$$\xymatrix{\Hh_{1,p}(x)\ar[r]^-\phi\ar[d]^{\pi_x}&\Hh_{m,n}(\phi(x))\ar[d]^{\pi_{\phi(x)}}\\W_x\ar@{.>}[r]^{\phi_x}&W_{\phi(x)}.}$$
\begin{lem}\label{lem:phix}
 If $x$ is generic for $\phi$, there exists a measurable map $\phi_x$ such that the diagram commutes almost everywhere. Moreover $\phi_x$ induces a measurable map $\hat\phi_x$ from the set of circles of $W_x$ to the set of generalized circles of $W_{\phi(x)}$ such that, for almost every chain $T$, we have that $\hat\phi(T)$ is a lift of $\hat\phi_x(\pi_x(T))$. 
\end{lem}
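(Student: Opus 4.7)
The plan is to construct $\phi_x$ pointwise, verify the diagram commutes almost everywhere, and then descend to the level of circles to construct $\hat\phi_x$.

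First I would define $\phi_x$. Recall from Lemma \ref{lem:vchains} (applied in the rank-one case) that $W_x$ is in bijection with the set of vertical chains through $x$: each $c \in W_x$ corresponds to the unique vertical chain $C_c \ni x$ with $\pi_x(C_c)=c$. Since $x$ is generic for $\phi$, Corollary \ref{cor:point-chain} ensures that for almost every $c$ the chain $C_c$ satisfies the conclusion of Lemma \ref{lem:1}; combined with $x \in C_c$ and Corollary \ref{cor:map on chains}, this forces $\phi(x) \in \hat\phi(C_c)$, so $\hat\phi(C_c)$ is a vertical $m$-chain through $\phi(x)$. By Lemma \ref{lem:vertical chains}(2), $\pi_{\phi(x)}$ collapses $\hat\phi(C_c)\cap\Hh_{m,n}(\phi(x))$ to a single point of $W_{\phi(x)}$, and we set
\[
\phi_x(c) := \pi_{\phi(x)}(\hat\phi(C_c)).
\]
Measurability of $\phi_x$ follows from measurability of $\hat\phi$ and algebraicity of $\pi_{\phi(x)}$.

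Commutativity of the diagram then reduces to a one-line calculation. For almost every $y \in \Hh_{1,p}(x)$, put $C_y := T_{x,y}$, so that $\pi_x(y)=\pi_x(C_y)$. By Lemma \ref{lem:1}, $\phi(y)\in\hat\phi(C_y)$; projecting gives
\[
\pi_{\phi(x)}(\phi(y))=\pi_{\phi(x)}(\hat\phi(C_y))=\phi_x(\pi_x(C_y))=\phi_x(\pi_x(y)).
\]

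For the second part, applying Corollary \ref{cor:point-chain} again, for almost every chain $T$ with $x\notin T$ the image $\hat\phi(T)$ is an $m$-chain, and $\pi_{\phi(x)}(\hat\phi(T))$ is by definition a generalized circle in $W_{\phi(x)}$; by the commutativity just established, it contains the curve $\phi_x(\pi_x(T))$. The main obstacle is that a priori this projection may depend on the chosen lift $T$ of the circle $C':=\pi_x(T)$: indeed by Proposition \ref{prop:errors} horizontal chains have trivial stabilizer $M_T$, so $M_x$ acts freely and transitively on the lifts of $C'$, and two different lifts could in principle give two different generalized circles each containing the $1$-parameter set $\phi_x(C')$. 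To handle this, I would fix a Borel measurable section $\sigma$ from the space of circles in $W_x$ to $\Tt_1$ and define $\hat\phi_x(C'):=\pi_{\phi(x)}(\hat\phi(\sigma(C')))$, which is automatically measurable. A Fubini argument, using the $M_x$-torsor structure of horizontal chains over circles together with the joint measurability of $(T,y)\mapsto \pi_{\phi(x)}(\phi(y))$, then yields that $\pi_{\phi(x)}(\hat\phi(T))=\hat\phi_x(\pi_x(T))$ for almost every chain $T$, which is the lifting property claimed in the lemma.
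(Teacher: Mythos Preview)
Your construction of $\phi_x$ and the verification that the square commutes are correct, and in fact more explicit than the paper's one-line appeal to Corollary~\ref{cor:point-chain}. One small caveat: deducing $\phi(x)\in\hat\phi(C_c)$ from Corollary~\ref{cor:map on chains} requires (via Fubini) that the fixed point $x$ lie in the full-measure set of first coordinates for which that corollary holds fiberwise. This is an additional full-measure condition beyond the paper's stated definition of ``generic'', but it is harmless since only almost every $x$ is needed anyway.

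The second part, however, has a genuine gap. You correctly isolate the obstacle --- two lifts $T,T'$ of the same circle $C'$ might a priori give different generalized circles $\pi_{\phi(x)}(\hat\phi(T))\neq\pi_{\phi(x)}(\hat\phi(T'))$ --- but your proposed fix does not resolve it. Defining $\hat\phi_x$ through a section $\sigma$ only shifts the problem: to obtain $\pi_{\phi(x)}(\hat\phi(T))=\hat\phi_x(\pi_x(T))$ for almost every $T$ you must still prove that almost every lift projects to the \emph{same} generalized circle as the particular lift $\sigma(C')$, and no Fubini or joint-measurability argument yields that equality by itself. Worse, the image of $\sigma$ has measure zero in the space of horizontal chains, so statements that hold for almost every chain say nothing about the specific chains $\sigma(C')$.

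The paper avoids the section entirely. It first uses Fubini on the smooth bundle of horizontal chains over circles to obtain that, for almost every circle $C$, the diagram commutes on the whole preimage $\pi_x^{-1}(C)$. From this it argues directly that the projections $\pi_{\phi(x)}(\hat\phi(T_i))$ coincide for almost every lift $T_i$ of $C$, since each such projection must contain the essential image $\phi_x(C)$; the common value is then \emph{defined} to be $\hat\phi_x(C)$. You should replace the section device by this coincidence argument.
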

\begin{proof}
The fact that a map $\phi_x$ exists making the diagram commutative on a full measure set is a direct application of Corollary \ref{cor:point-chain}.

Since the set of horizontal chains in $\deH^p$ is a smooth bundle over the set of Euclidean circles in $W_x\cong \C^{p-1}$,  we have that, for almost every Euclidean circle $C$, the map $\hat \phi$ is defined on almost every chain $T$ with $\pi_x(T)=C$. Moreover a Fubini-type argument implies that, for almost every circle $C$, the diagram commutes when restricted to the preimage of $C$.

This implies that the projections $\hat\phi_x(C):=\pi_{\phi(x)}(\hat\phi(T_i))$ coincide for almost every lift $T_i$ of $C$ if $C$ satisfies the hypotheses of the previous paragraph, and this concludes the proof.    
\end{proof}

\subsection{A measurable cocycle}\label{ssec:cocycle}
Recall that if $H,K$ are topological groups  and $Y$ is a Borel $H$-space, then a map $\alpha:H\times Y\to K$ is a \emph{Borel cocycle} if it is a measurable map such that, for every $h_1,h_2$ in $H$ and for almost every $y\in Y$, it holds $\alpha(h_1h_2,y)=\alpha(h_1,h_2\cdot y)\alpha(h_2,y)$.

\begin{prop}\label{prop:cocycle}
 Let $\phi$ be a measurable, chain preserving map $\phi:\deH^p\to\Ss_{m,n}$. For almost every point $x$ in $\deH^p$ there exists a measurable cocycle $\alpha: M_x\times (\deH^p)^{x}\to  M_{\phi(x)}$
 such that $\phi$ is $\alpha$-equivariant. 
\end{prop}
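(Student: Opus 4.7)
The plan is to define $\alpha(m,y)$ as the unique element of $M_{\phi(x)}$ that translates $\phi(y)$ to $\phi(my)$; the existence and uniqueness of such an element will follow directly from Proposition \ref{prop:errors} together with Lemma \ref{lem:phix}. Fix a point $x\in\deH^p$ that is generic in the sense of Corollary \ref{cor:point-chain}, so that the commutative diagram of Lemma \ref{lem:phix} and the associated map $\phi_x:W_x\to W_{\phi(x)}$ are defined on a full-measure subset. Recall that $M_x$ acts on $\Hh_{1,p}(x)$ trivially on the base $W_x$, and that, for each point $y\in(\deH^p)^x$, the orbit $M_x\cdot y$ equals $T_{x,y}\setminus\{x\}$, where $T_{x,y}$ denotes the unique chain through $x$ and $y$.

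Using Corollary \ref{cor:map on chains}, pick a conull set of pairs $(m,y)\in M_x\times(\deH^p)^x$ on which all of the following hold simultaneously: $\phi(y)$ and $\phi(my)$ are transverse to $\phi(x)$; the $m$-chain $\hat\phi(T_{x,y})$ is defined and contains both $\phi(y)$ and $\phi(my)$; and the diagram of Lemma \ref{lem:phix} commutes at $y$ and at $my$. Since $m\in M_x$ acts trivially on $W_x$, we have $\pi_x(my)=\pi_x(y)$, hence
\[
\pi_{\phi(x)}(\phi(my))=\phi_x(\pi_x(my))=\phi_x(\pi_x(y))=\pi_{\phi(x)}(\phi(y)).
\]
Thus $\phi(y)$ and $\phi(my)$ lie in the same fiber of $\pi_{\phi(x)}$ and both belong to the $m$-chain $C':=\hat\phi(T_{x,y})$. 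By Proposition \ref{prop:errors}(2), this intersection is exactly $M_{C'}\cdot\phi(y)$, where $M_{C'}\subseteq M_{\phi(x)}$. Since $M_{\phi(x)}$ acts freely on each fiber of $\pi_{\phi(x)}$ (it acts on $\Hh_{m,n}(\phi(x))=M((n-m)\times m,\C)\times\fru(m)$ by translation on the $\fru(m)$-coordinate), there exists a unique element $\alpha(m,y)\in M_{C'}\subseteq M_{\phi(x)}$ such that $\phi(my)=\alpha(m,y)\cdot\phi(y)$.

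Measurability of $\alpha$ follows at once: the assignment $(u,v)\mapsto m$ with $v=m\cdot u$ is an algebraic (hence Borel) map on pairs lying in a common $M_{\phi(x)}$-orbit, and we are composing this with the Borel maps $\phi$ and the $M_x$-action. The cocycle identity is forced by uniqueness: for $m_1,m_2\in M_x$ and a.e. $y$,
\[
\alpha(m_1m_2,y)\cdot\phi(y)=\phi(m_1m_2\cdot y)=\alpha(m_1,m_2 y)\cdot\phi(m_2 y)=\alpha(m_1,m_2 y)\,\alpha(m_2,y)\cdot\phi(y),
\]
so by freeness $\alpha(m_1m_2,y)=\alpha(m_1,m_2 y)\,\alpha(m_2,y)$. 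By construction $\phi(my)=\alpha(m,y)\phi(y)$, i.e., $\phi$ is $\alpha$-equivariant.

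The only delicate point in the argument is verifying that $\phi(my)$ lies in the $M_{\phi(x)}$-orbit of $\phi(y)$; this rests on combining the commutativity of the Lemma \ref{lem:phix} diagram (to place the two points in the same fiber of $\pi_{\phi(x)}$) with Proposition \ref{prop:errors}(2) (to realize the intersection of that fiber with $\hat\phi(T_{x,y})$ as an $M_{C'}$-orbit). The rest is bookkeeping with Fubini to make sure the chosen null sets are compatible across the various $m\in M_x$.
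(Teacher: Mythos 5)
Your proof is correct and follows essentially the same route as the paper: fix a generic point $x$, observe that an element of $M_x$ moves $y$ within the vertical chain $T_{x,y}$, deduce that $\phi(y)$ and $\phi(my)$ lie in the image chain $\hat\phi(T_{x,y})$ and hence in a common $M_{\phi(x)}$-orbit, and get measurability and the cocycle identity by simple transitivity together with a Fubini bookkeeping. The one cosmetic difference is that the paper simply asserts the two image points lie on a common vertical chain and appeals directly to simple transitivity of $M_{\phi(x)}$ on that chain, whereas you reconstruct this via the commutativity of the Lemma~\ref{lem:phix} diagram (to put both points in one $\pi_{\phi(x)}$-fiber) combined with Proposition~\ref{prop:errors}(2); since $\hat\phi(T_{x,y})$ is $m$-vertical, $M_{C'}=M_{\phi(x)}$ and the two formulations say the same thing.

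Two small remarks. First, your invocation of Proposition~\ref{prop:errors}(2) is slightly more machinery than needed here: once you know $\hat\phi(T_{x,y})$ is a vertical chain through $\phi(x)$, Lemma~\ref{lem:vertical chains}(2)--(3) already identifies $\hat\phi(T_{x,y})\cap\Hh_{m,n}(\phi(x))$ with a single fiber on which $M_{\phi(x)}$ acts simply transitively; that gives existence and uniqueness of $\alpha(m,y)$ at once. Second, you should define $\alpha$ on the null set of exceptional pairs (e.g.\ by the identity), so that $\alpha$ is a globally defined Borel map, as the paper does; this is a standard but necessary technicality to make $\alpha$ a genuine Borel cocycle.
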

\begin{proof}
 Let us fix a point $x$ generic for the map $\phi$. For almost every pair $(e,y)$ where $e\in M_x$ and $y\in\Hh_{1,p}(x)$, we have that the points $\phi(y)$ and $\phi(ey)$ are on the same vertical chain in $\Hh_{m,n}(\phi(x))$. In particular there exists an element $\alpha(e,y)\in M_{\phi(x)}$ such that $\alpha(e,y)\phi(y)=\phi(ey)$. We extend $\alpha$ by defining it to be 0 on pairs that do not satisfy this assumption. The function $\alpha$ is measurable since $\phi$ is measurable.
 
We now have to show that the map $\alpha$ we just defined is actually a cocycle.
In order to do this let us fix the set $\Oo$ of points $z$ for which $\hat\phi(T_{x,z})$ is $m$-vertical and $\phi(z)\in \hat \phi(T_{x,z})$. $\Oo$ has full measure as a consequence of Lemma \ref{lem:1}. Let us now fix two elements $e_1,e_2\in M_x$. For every element $z$ in the full measure set $\Oo\cap e_2^{-1}\Oo\cap e_1e_2^{-1}\Oo$, the three points $\phi(z),\phi(e_2z),\phi(e_1e_2z)$ belong to the same vertical $m$-chain, moreover, by definition of $\alpha$, we have
$$\begin{array}{rl}\alpha(e_1e_2,z)\phi(z)&=\phi(e_1(e_2z))\\&=\alpha(e_1,e_2z)\phi(e_2z)\\&=\alpha(e_1,e_2z)\alpha(e_2,z)\phi(z).\end{array}$$
The conclusion follows from the fact that the action of $M_{\phi(x)}$ on $\Hh_{m,n}(\phi(x))$ is simply transitive.
\end{proof}

\begin{prop}\label{prop:homom>algebraic}
 Let us fix a point $x$. Assume that there exists a measurable function $\beta:M_x\times W_x\to M_{\phi(x)}$ such that for every $e\in M_x$, for almost every $T$ in $W_x$ and for almost every $z$ in $T$, the equality $\alpha(e,z)=\beta(e,T)$ holds. Then the restriction of the boundary map $\phi$ to almost every chain through the point $x$ is rational.
\end{prop}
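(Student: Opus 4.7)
The plan is to show that, for almost every chain $T$ through $x$, the function $e\mapsto \beta(e,T)$ agrees almost everywhere with a genuine measurable group homomorphism $M_x\to M_{\phi(x)}$, to upgrade this via automatic continuity to a polynomial morphism, and then to recognise the restriction $\phi|_T$ as the composition of this morphism with the algebraic orbit parametrisations of $T$ and $\hat\phi(T)$ by the centers $M_x$ and $M_{\phi(x)}$.

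First, I fix a generic chain $T\in W_x$ and a basepoint $z_0\in T\setminus\{x\}$ for which, by two applications of Fubini to the hypothesis, $\alpha(e,z_0)=\beta(e,T)$ holds for almost every $e\in M_x$ and, simultaneously, $\alpha(e,z)=\beta(e,T)$ holds for almost every $(e,z)\in M_x\times T$. By Lemma \ref{lem:vertical chains}(3) the orbit map
\[
\psi\colon M_x\longrightarrow T\setminus\{x\},\qquad e\longmapsto e\cdot z_0,
\]
is an algebraic isomorphism, and the $\alpha$-equivariance of $\phi$ from Proposition \ref{prop:cocycle} gives
\[
\phi(\psi(e))=\alpha(e,z_0)\cdot\phi(z_0)=\beta(e,T)\cdot\phi(z_0)\qquad\text{for a.e.\ }e\in M_x.
\]

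Next, combining the cocycle identity for $\alpha$ with the hypothesis (and using that $M_x$ preserves $T$, so $e_2\cdot z\in T$ whenever $z\in T$), for every $(e_1,e_2)\in M_x\times M_x$ and almost every $z\in T$,
\[
\beta(e_1e_2,T)=\alpha(e_1e_2,z)=\alpha(e_1,e_2\cdot z)\,\alpha(e_2,z)=\beta(e_1,T)\,\beta(e_2,T).
\]
By the classical theorem that a measurable almost-everywhere homomorphism between Polish groups coincides a.e.\ with a genuine measurable homomorphism (see e.g.\ Appendix B of Zimmer's \emph{Ergodic Theory and Semisimple Groups}), the map $e\mapsto\beta(e,T)$ agrees almost everywhere with a genuine measurable homomorphism $\tilde\beta_T\colon M_x\to M_{\phi(x)}$. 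Since $M_x\cong i\R$ and $M_{\phi(x)}\cong\fru(m)$ are connected abelian real Lie groups, automatic continuity forces $\tilde\beta_T$ to be continuous, hence smooth, hence $\R$-linear; in particular $\tilde\beta_T$ is polynomial in the natural coordinates.

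Finally, composing the algebraic isomorphism $\psi^{-1}$, the linear map $\tilde\beta_T$, and the algebraic orbit map $M_{\phi(x)}\to\hat\phi(T)$, $m\mapsto m\cdot\phi(z_0)$, which is an isomorphism onto the Zariski open subset of $\hat\phi(T)$ transverse to $\phi(x)$, we obtain a rational map $T\setminus\{x\}\to\hat\phi(T)\subset\Ss_{m,n}$ that agrees with $\phi|_T$ almost everywhere. Since the set of generic chains $T$ has full measure in $W_x$, this yields the claim. The principal step requiring attention is the promotion of the almost-everywhere multiplicativity of $\beta(\cdot,T)$ to a genuine homomorphism; once that is secured, everything else rests on the two soft facts that measurable group homomorphisms between Lie groups are continuous and that continuous homomorphisms between vector Lie groups are linear.
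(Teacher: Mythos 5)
Your proof is correct and follows essentially the same route as the paper: Fubini to fix a generic chain, the cocycle identity to obtain almost-everywhere multiplicativity of $\beta(\cdot,T)$, the Zimmer a.e.-homomorphism theorem, and finally the simply transitive algebraic actions of $M_x$ and $M_{\phi(x)}$ to convert the resulting homomorphism into a rational map on $T$. Your write-up is a touch more explicit than the paper's in unwinding the automatic-continuity step ($M_x\cong\fru(1)$, $M_{\phi(x)}\cong\fru(m)$, so a measurable homomorphism is $\R$-linear) and in spelling out the orbit-map composition, which the paper summarizes as ``the action of $M_x$ and $M_{\phi(x)}$ on each vertical chain is algebraic and simply transitive.''

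One small quantifier slip: you write that the multiplicativity $\beta(e_1e_2,T)=\beta(e_1,T)\beta(e_2,T)$ holds ``for every $(e_1,e_2)\in M_x\times M_x$'' and a.e.\ $z\in T$. From the hypothesis (``for every $e$, for a.e.\ $T$, for a.e.\ $z$'') you can only guarantee, after Fubini, that a fixed generic $T$ works for \emph{almost} every $e$; so you should claim multiplicativity only for almost every pair $(e_1,e_2)$, as the paper does. This is harmless because the very theorem you then invoke (Zimmer, Theorem~B.2) only requires the a.e.\ version, so the proof survives — it is merely an overstatement, and in fact sits slightly in tension with your own appeal to the ``almost-everywhere homomorphism'' theorem a sentence later.
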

\begin{proof}
We are assuming that for every $e$ in $M_x$ for almost every $T$ in $W_x$ and for almost every $z$ in $T$, the equality $\alpha(e,z)=\beta(e,T)$ holds. Fubini's Theorem then implies that for every $T$ in a full measure subset $\Ff$ of $W_x$, for almost every $e$ in $M_x$ and almost every $z$ in $T$ the equality $\alpha(e,z)=\beta(e,T)$ holds. In particular for every vertical chain $T$ in $\Ff$ and almost every pair $(e_1,e_2)$ in $M_x^2$ we have $\beta(e_1,T)\beta(e_2,T)=\beta(e_1e_2,T)$: it is in fact enough  to chose $e_1$ and $e_2$ so that the equality of $\alpha(e_i,z)$ and $\beta(e_i,T)$ holds for almost every $z$ and compute the cocycle identity for $\alpha$ in a point $z$ that works both for $e_1$ and $e_2$.

It is classical that if $\pi:G\to J$ satisfies $\pi(xy)=\pi(x)\pi(y)$ for almost every pair $(x,y)$ in $G^2$, then $\pi$ coincides almost everywhere with an actual Borel homomorphism (cfr. \cite[Theorem B.2]{Zimmer}).  In particular for every $T$ in $\Ff$, we can assume (up to modifying $\beta|_T$ on a zero measure subset)  that the restriction of $\beta$ to $T$ is a measurable homomorphism $\beta_T:M_x\to M_{\phi(x)}$ and hence coincides almost everywhere with an algebraic map. Since the action of $M_x$ and $M_{\phi(x)}$ on each vertical chain is algebraic and simply transitive, we  get that for almost every vertical chain $T$ the restriction of $\phi$ to $T$ is algebraic.
\end{proof}
The fact that we let $\beta$ depend on the vertical chain $T$ might be surprising, and it is probably possible to prove that the cocycle $\alpha$ coincides almost everywhere with an homomorphism that doesn't depend on the vertical chain $T$. However since it suffices to prove that the restriction of $\alpha$ to almost every vertical chain coincides almost everywhere with an homomorphism, and since this reduces the technicalities involved, we will restrict to this version. 

The rest of the section is devoted to prove, using the chain geometry of $\Ss_{m,n}$, that the hypothesis of Proposition \ref{prop:homom>algebraic} is satisfied whenever $\phi$ is a Zariski dense, chain geometry preserving map and $m<n$. In the following proposition we deal with a preliminary easy case, in which the geometric picture behind the general proof should be clear.

\begin{prop}\label{prop:easy}
Let $\phi:\deH^p\to \Ss_{m,n}$ be a measurable, Zariski dense, chain geometry preserving map, and let $n\geq 2m$. Then the restriction of $\phi$ to almost every chain coincides almost everywhere with a rational map.
\end{prop}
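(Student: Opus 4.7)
The goal is to verify the hypothesis of Proposition \ref{prop:homom>algebraic}, that is, to show that for almost every $x\in\deH^p$ the cocycle $\alpha\colon M_x\times (\deH^p)^{x}\to M_{\phi(x)}$ produced by Proposition \ref{prop:cocycle} depends (up to null sets) only on $e\in M_x$ and on the vertical chain $T_{x,z}$ containing $z$, not on $z$ within that chain.

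First I would locate the values of $\alpha$. Combining the commutativity of the diagram in Lemma \ref{lem:phix} with the definition of $\alpha$, one observes that $\pi_{\phi(x)}(\phi(z))=\phi_x(\pi_x(z))=\phi_x(p_T)=\pi_{\phi(x)}(\phi(ez))$ for $z,ez\in T=T_{x,z}$. Hence $\phi(z)$ and $\phi(ez)$ belong to the same vertical chain of $\Hh_{m,n}(\phi(x))$ and, by Corollary \ref{cor:map on chains}, both lie in $\hat\phi(T)$. Proposition \ref{prop:errors}(2) then identifies their common fiber with an orbit of $M_{\hat\phi(T)}$, so $\alpha(e,z)\in M_{\hat\phi(T)}\subseteq M_{\phi(x)}$ almost everywhere. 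This step is independent of the inequality $n\geq 2m$.

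The hypothesis $n\geq 2m$ is used to control the intersection index $k=i_{\phi(x)}(\hat\phi(T))$. By Lemma \ref{lem:parametrization of k-vertical chains}, an $(m,k)$-circle has complex dimension $m^2-km$, while $W_{\phi(x)}\cong M((n-m)\times m,\C)$ has complex dimension $m(n-m)\geq m^2$, so horizontal $(m,0)$-circles exist and form a Zariski-dense family of maximal-dimensional generalized circles. A Fubini-type argument over the fibration of chains of $\deH^p$, together with the Zariski density of the induced map $\phi_x$ supplied by Proposition \ref{prop:phiZariskidense}, rules out the possibility that $\hat\phi(T)$ is $k$-vertical with $k<m$ on a positive-measure set of vertical chains $T$: such $T$ would force the essential image of $\phi_x$ into a proper Zariski-closed union of $(m,k)$-circles, contradicting Zariski density. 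One concludes that for almost every vertical chain $T$ through $x$ one has $\hat\phi(T)$ $m$-vertical, i.e. $\phi(x)\in \hat\phi(T)$ and $M_{\hat\phi(T)}=M_{\phi(x)}$.

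Once this is established, $\phi$ restricts to a measurable equivariant map between the $M_x$-torsor $T\setminus\{x\}$ and the $M_{\phi(x)}$-torsor $\hat\phi(T)\setminus\{\phi(x)\}$. Choosing a Borel section $s\colon W_x\to \deH^p$ with $s(p_T)\in T$ and setting $\beta(e,T):=\alpha(e,s(p_T))$, the cocycle identity yields
\[
 \alpha(e,e'\,s(p_T))=\beta(ee',T)\,\beta(e',T)^{-1},
\]
so the equality $\alpha(e,z)=\beta(e,T)$ is equivalent to $\beta(\cdot,T)\colon M_x\to M_{\phi(x)}$ being a group homomorphism for almost every $T$. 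The main obstacle is this final step: combining the chain-preserving property on auxiliary horizontal chains of $\deH^p$ meeting $T$ in two points with the simple transitivity of $M_{\phi(x)}$ on each vertical $m$-chain yields the identity $\beta(ee',T)=\beta(e,T)\beta(e',T)$ for almost every pair $(e,e')\in M_x^2$; a standard measurable-to-Borel upgrade (see \cite[Theorem B.2]{Zimmer}) then promotes $\beta(\cdot,T)$ to an honest Borel homomorphism for almost every $T$. Proposition \ref{prop:homom>algebraic} then gives that $\phi$ restricted to almost every chain coincides almost everywhere with a rational map.
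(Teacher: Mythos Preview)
Your plan has two genuine gaps.

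First, the role of the hypothesis $n\geq 2m$ is misplaced. For a vertical chain $T\in W_x$ one already knows (without any inequality on $m,n$) that $\hat\phi(T)$ is $m$-vertical: choose generic $y,y'\in T$ with $\phi(y)\tra\phi(y')$; by the commuting square of Lemma~\ref{lem:phix} both $\phi(y),\phi(y')$ lie in a single fiber of $\pi_{\phi(x)}$, i.e.\ in one vertical $m$-chain $V$ through $\phi(x)$; since two transverse points determine a unique $m$-chain, $\hat\phi(T)=T_{\phi(y),\phi(y')}=V$. So your second paragraph proves something that is automatic and does not use $n\geq 2m$. In the paper, the inequality is used for a different purpose: to guarantee that for a generic auxiliary point $w$ \emph{off} the vertical chain $F$ one has $\dim\langle\phi(z_1),\phi(z_2),\phi(w)\rangle=3m$, which forces the $m$-chains $\hat\phi(T_{w,z_i})$ to be $0$-vertical, so that $M_{\hat\phi(T_{w,z_i})}$ is trivial and Proposition~\ref{prop:lifts of k-vertical chains} pins down the lift uniquely.

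Second, and more seriously, your final step cannot work as stated. Two distinct chains in $\deH^p$ meet in at most one point (the underlying $2$-planes in $\C^{p+1}$ intersect in a line), so there is no ``auxiliary horizontal chain of $\deH^p$ meeting $T$ in two points''. The actual argument compares $\alpha(e,z_1)$ and $\alpha(e,z_2)$ through a third point $w\notin F$: one uses the two chains $T_{z_1,w}$ and $T_{z_2,w}$, shows their $\hat\phi$-images are horizontal (this is where $n\geq 2m$ enters), and then the triviality of $M_{\hat\phi(T_{z_i,w})}$ together with uniqueness of lifts forces $\alpha(e,z_1)=\alpha(e,w)=\alpha(e,z_2)$. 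Your reduction to showing $\beta(\cdot,T)$ is a homomorphism via a section $s$ is circular: proving $\beta(ee',T)=\beta(e,T)\beta(e',T)$ is exactly the statement that $\alpha(e,\cdot)$ is constant along $T$, which is what you need to establish, not assume.
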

\begin{proof}
We want to apply Proposition \ref{prop:homom>algebraic} and show that the cocycle $\alpha:M_x\times \deH^p\to M_{\phi(x)}$ only depends on the vertical chain a point belongs to. We consider the set $\calF\subseteq W_x$ of chains $F$ such that 

\begin{minipage}{.7\textwidth}
\begin{enumerate}
 \item $\hat\phi(F)$ is an $m$-vertical chain, hence in particular $\phi_x(F)$ is defined, 
  \item for almost every circle $C$ containing the point $F\in W_x$, for almost every chain $T$ lifting $C$ the diagram  of Lemma \ref{lem:phix} commutes almost everywhere.
 \end{enumerate}
\end{minipage}
\hspace{.5cm}
\begin{minipage}{.3\textwidth}
\vspace{-.5cm}
\begin{tikzpicture}[scale=.8]
   \draw (0,0) to (0,3);
   \node [left] at (0,3){$F$};

  \draw (1,1.5) circle [x radius=1.5, y radius=.5, rotate=15] ;
  \node at (1,1.5) {$T$};

  \node[above left] at (0,1.7) {$z$};
  \filldraw (0,1.65) circle (1pt);
\end{tikzpicture}
\end{minipage}

It follows from the proof of Lemma \ref{lem:phix} that the set $\calF$ is of full measure, moreover, we get, applying Fubini, that if $F$ is an element in $\calF$, for almost every point $z$ in $F$ and almost every chain $T$ through $z$ the diagram of Lemma \ref{lem:phix} commutes almost everywhere when restricted to $T$. In particular, using Fubini again, this implies that for almost every point $w$ in $\deH^p$ the diagram of Lemma \ref{lem:phix} commutes almost everywhere when restricted to the chain $T_{z,w}$.

\begin{minipage}{.56\textwidth}

Let us now fix a chain $F\in \calF$ and denote by $\Oo$ the full measure set of points in $F$ for which that holds. For every element $e\in M_x$ we also consider the full measure set $\Oo_{e}=\Oo\cap e^{-1}\Oo$.  
We claim that given two points $z_1,z_2\in \Oo_e$ the cocycle $\alpha(e,z_i)$ has the same value $\beta(e,F)$.
In fact let us  consider the set $\Aa_{z_1,z_2,e}\subseteq\deH^p$ consisting of points $w$  such that

 \begin{enumerate}
  \item  $\phi (w)\in\hat \phi(T_{w,z_1})\cap \hat \phi(T_{w,z_2})$
  \item $\phi (ew)\in\hat \phi(T_{ew,ez_1})\cap \hat \phi(T_{ew,ez_2})$
  \item $\dim \<\phi(z_1),\phi(z_2),\phi(w)\>=3m$.
  \end{enumerate}
 \end{minipage}
 \hspace{1.2cm}
\begin{minipage}{.2\textwidth}

   \begin{tikzpicture}[scale=.8]
   \draw (0,0) to (0,8);
   \node [left] at (0,8){$F$};

  \draw (1,1.5) circle [x radius=1.5, y radius=.5, rotate=25] ;
  \node at (1,1.5) {$T_{z_1,w}$};

  \node[left] at (0,1.5) {$z_1$};
  \filldraw (0,1.5) circle (1pt);
  
  \draw (.8,2.5) circle [x radius=1, y radius=.5, rotate=-25] ;
  \node at (.8,2.5){$T_{z_2,w}$};
  
 \node[ above right] at (1.7,2.3) {$w$};
 \filldraw (1.7,2.3) circle (1pt);

 \node[ above left] at (0,3.05) {$z_2$};
 \filldraw (0,3.05) circle (1pt);

  \draw (1,5) circle [x radius=1.5, y radius=.5, rotate=25] ;
  \node at (2,4) {$T_{ez_1,ew}=eT_{z_1,w}$};

  \node[left] at (0,5) {$ez_1$};
  \filldraw (0,5) circle (1pt);
  
  \draw (.8,6) circle [x radius=1, y radius=.5, rotate=-25] ;
  \node at (1.8,7){$T_{ez_2,ew}=eT_{z_2,w}$};
  
 \node[ above right] at (1.7,5.8) {$ew$};
 \filldraw (1.7,5.8) circle (1pt);

 \node[ above left] at (0,6.55) {$ez_2$};
 \filldraw (0,6.55) circle (1pt);

  \end{tikzpicture}
\end{minipage}

 We claim that the set $\Aa_{z_1,z_2,e}$ is not empty. Indeed, by definition of  $\Oo_e$, the set of points $w$ satisfying the first two assumption is of full measure. Moreover, since $n\geq 2m$, the set $\mathcal C$ of points in $\Ss_{m,n}$ such that $\dim \<\phi(z_1),\phi(z_2),\phi(w)\>< 3m$ is a proper Zariski closed subset of $\Ss_{m,n}$. Since the map $\phi$ is Zariski dense, the preimage of $\mathcal C$ cannot have full measure, and this implies that $\Aa_{z_1,z_2,e}$ has positive measure, in particular it contains at least one point. The third assumption on the point $w$ implies that the $m$-chain containing $\phi(w)$ and $\phi(z_i)$ is horizontal for $i=1,2$.
 
 Let us fix a point $w\in \Aa_{z_1,z_2,e}$ and consider the $m$-chain $\hat \phi(eT_{w,z_i})$ for $i=1,2$. The $m$-chain $\hat \phi(eT_{w,z_i})$ is a lift of the $(m,0)$-circle $C_i=\pi_{\phi(x)}(\hat\phi(T_{w,z_i}))$ that contains both the points $\phi(e z_i)$ and $\phi(ew)$. In particular, since $\alpha(e,z_i)\hat \phi (T_{w,z_i})$ is a lift of $C_i$ containing $\phi(ez_i)$ we get that $ \alpha(e,z_i)\hat \phi (T_{w,z_i})=\hat \phi(eT_{w,z_i})$. Similarly we get that $ \alpha(e,w)\hat \phi (T_{w,z_i})=\hat \phi(eT_{w,z_i})$. This gives that $\alpha(e,z_i)^{-1} \alpha(e,w)\in M_{\hat\phi(T_{w,z_i})}$, but the latter group is the trivial group since we know that the chain $\hat\phi(T_{w,z_i})$ is 0-vertical. This implies that $\alpha(e,z_1)=\alpha(e,w)=\alpha(e,z_2)$.
\end{proof}
\subsection{Proof of Theorem \ref{thm:restriction rational}}
Let us now go back to the setting of Theorem \ref{thm:restriction rational}: we fix a measurable, chain geometry preserving, Zariski dense map $\phi:\deH^p\to \Ss_{m,n}$, a generic point $x\in\deH^p$ such that for almost every chain $t\in W_x$, for almost every point $y\in t$, $\phi(y)\in \hat \phi(t)$. We want to show that the measurable cocycle $\alpha:\deH^p\backslash\{x\}\times \mathfrak u(1)\to\mathfrak u(m)$ coincides on almost every vertical chain with a measurable homomorphism. In particular it is enough to show that for almost every pair $z_1,z_2$ on a vertical chain in $\deH^p$, the values  $\alpha(e,z_1)$ and $\alpha(e,z_2)$ coincide. For a generic pair $(z_1,z_2)$ we have that the triple $(\phi(x),\phi(z_1),\phi(z_2))$ is contained in a tube type subdomain and hence we can compose the map $\phi$ with an element of the group $\SU(m,n)$ so that $\phi(x)=v_\infty$, $\phi(z_1)=v_0$ and $\phi(z_2)=v_d$, here and in the following we denote by $v_d$ the subspace with basis $\bpm\Id&0&d\epm^T$ for some 
diagonal matrix $d$ with all entries equal to $\pm i$. In fact it is proven in \cite[Theorem 5.2]{
CN} that the Bergmann cocycle is a complete invariant for the $\SU(m,n)$ action on triples of pairwise transverse points in an $m$-chain, and varying the matrix $d$ one gets that  $\beta_{\Ss}(v_0,v_\infty, v_d)$ achieves all possible values     (cfr. Proposition \ref{prop:bergmann}). 

For the rest of the section we restrict to the case $n<2m$, since the otherwise Theorem \ref{thm:restriction rational} follows from Proposition \ref{prop:easy} and denote by $l$ the integer $l=n-m$ and $k=2m-n$. In analogy with the proof of Proposition \ref{prop:easy} we denote by $\Aa_{z_1,z_2,e}$ to be the full measure subset of $\deH^p$ consisting of points with 
\begin{enumerate}
   \item $\phi (w)\in\hat \phi(T_{w,z_1})\cap \hat \phi(T_{w,z_2}), $
   \item $\phi (ew)\in\hat \phi(T_{ew,ez_1})\cap \hat \phi(T_{ew,ez_2}), $
   \item$\dim \<v_0,v_d,\;\phi(w)\>=m+n.$
  \end{enumerate}

We will consider the subset $\Dd_{v_0,v_d}$ of $\Ss_{m,n}$ defined by 
$$\Dd_{v_0,v_d}=\{w\in\Ss_{m,n}|\; w \text{ is transverse to } v_0,v_d\text{ and } \<v_0,v_d\> \}.$$
It is easy to verify that $\Dd_{v_0,v_d}$ consists of points $w$ such that both chains $T_{v_0,w}$ and $T_{v_d,w}$ are well defined and $k$-vertical: indeed in our assumptions $w$ is transverse to $\<v_0,v_d\>=\<v_\infty,v_0\>$. In particular $\dim\<w,v_0,v_\infty\>=n+m$ and we get
$$n+m=\dim\<w,v_0\>+\dim v_\infty-\dim (v_\infty\cap \<v_0,w\>)$$
which implies that $i_{v_\infty}(T_{v_0,w})=3m-(n+m)=k$.

Our next goal is to associate to any point $w$ in $\Dd_{v_0,v_d}$ subgroups $E(w)$, $I(w)$ which, when we consider a point $w$ that is image of a point $z$ in $\Aa_{v_0,v_d,e}$, represent, respectively, the error allowed by the point $z$ for the cocycle $\alpha(e,v_0)$ and some information on the difference $\alpha(e,v_0)-\alpha(e,v_d)$ obtained applying the strategy of Proposition \ref{prop:easy} to the point $z$.

In order to define the subgroups properly, we use the identification $M_{v_\infty}=\fru(m)$ provided in Section \ref{sec:S_mn} and use the linear structure on $\fru(m)$. Moreover we will denote by $H $ the positive definite bilinear form on $\fru(m)$ given by $H(A,B)=\tr A^*B$.  We also denote by $\beta_{v_0} $ the map
$$\begin{array}{cccc}\beta_{v_0}:&\Dd_{v_0,v_d}\subseteq\Ss_{m,n}&\to&\Gr_k(v_\infty)\\&w&\to&\<v_0,w\>\cap v_\infty\end{array}$$
similarly we define $\beta_{v_d}$ so that $\beta_{v_d}(w)=\<v_d,w\>\cap v_\infty$.

We want to understand the subspaces on which the possible defect of the cocycle $\alpha$ to be an homomorphism are confined. Whenever two $k$-dimensional subspaces $Z_i$ of $\C^m$ are fixed we denote by $S(Z_1,Z_2)$ the subspace:
$$S(Z_1,Z_2)=\<z_1z_2^*-z_2z_1^*|\;z_i\in Z_i\><\mathfrak u(m).$$

The map $S$ is useful to define the error subgroup $E(w)$ associated to a point $w$ in $\Dd_{v_0,v_d}$:
$$E(w)=S(\beta_{v_0}(w),\beta_{v_0}(w))+S(\beta_{v_d}(w)\beta_{v_d}(w)).$$
%

For each point $z$ in the set $\Aa_{z_1,z_2,e}$ the error group $E(\phi(z))$ bounds the error of the cocycle $\alpha$:
  \begin{lem}
For every point $z$ in $\Aa_{z_1,z_2,e}$ we get $\alpha(e,z_1)-\alpha(e,z_2)\in E(\phi(z)).$ 
\end{lem}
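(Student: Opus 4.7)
The plan is to adapt the geometric argument of Proposition \ref{prop:easy} to the $k$-vertical setting, where the relevant stabilizers $M_{T}$ no longer vanish but instead produce precisely the error subgroup $E(\phi(z))$. Write $w := \phi(z)$ and consider the two chains $T_1 := T_{v_0, w}$ and $T_2 := T_{v_d, w}$; by condition (1) in the definition of $\Aa_{z_1,z_2,e}$, since two transverse points of $\Ss_{m,n}$ determine a unique $m$-chain, $\hat\phi(T_{z,z_i}) = T_i$. Analogously by condition (2), $\hat\phi(eT_{z,z_i})$ is the unique $m$-chain through the transverse pair $(\phi(ez_i), \phi(ez))$, and lifts the circle $\pi_{v_\infty}(T_i)$.

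First I would use condition (3) to determine the intersection index. Since $v_d \subset \<v_0, v_\infty\>$ and $v_d$ is transverse to both $v_0$ and $v_\infty$, we have $\<v_0, v_d\> = \<v_0, v_\infty\>$, hence $\dim\<v_0,v_\infty, w\> = m+n$. The dimension formula then gives $\dim(\<v_0,w\> \cap v_\infty) = 2m-n = k$, so $T_1$ is $k$-vertical with $Z_{T_1} = \beta_{v_0}(w)$, and similarly for $T_2$ with $Z_{T_2} = \beta_{v_d}(w)$.

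Next, mimicking the argument of Proposition \ref{prop:easy}: since $\alpha(e,z_i), \alpha(e,z) \in M_{v_\infty}$ act trivially on $W_{v_\infty}$, both $\alpha(e,z_i) T_i$ and $\alpha(e,z) T_i$ are $m$-chains lifting the same circle $\pi_{v_\infty}(T_i)$, the former passing through $\phi(ez_i) = \alpha(e,z_i)\phi(z_i)$ and the latter through $\phi(ez) = \alpha(e,z) w$. But $\hat\phi(eT_{z,z_i})$ is also a lift of this circle passing through both $\phi(ez_i)$ and $\phi(ez)$. By the uniqueness statement of Proposition \ref{prop:lifts of k-vertical chains}(2), both coincide with $\hat\phi(eT_{z,z_i})$, so $\alpha(e,z_i)^{-1}\alpha(e,z) \in M_{T_i}$. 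Since $M_{v_\infty} \cong \mathfrak u(m)$ is abelian I rewrite this additively as $\alpha(e,z) - \alpha(e,z_i) \in M_{T_i}$.

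Finally I identify $M_{T_i}$ with the required $S(\cdot,\cdot)$-subspace: by Proposition \ref{prop:errors}(4), picking $a \in \GL(m)$ with $a(Z_{T_i}) = Z_k$ yields $M_{T_i} = i(a^{-1} E_k a^{-*})$. A direct check from the definitions shows $E_k = S(Z_k,Z_k)$ as real subspaces of $\mathfrak u(m)$, and then the identity $a^{-1}(z_1 z_2^*)a^{-*} = (a^{-1}z_1)(a^{-1}z_2)^*$ gives $a^{-1}S(Z_k,Z_k)a^{-*} = S(a^{-1}Z_k, a^{-1}Z_k) = S(Z_{T_i}, Z_{T_i})$. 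Hence $M_{T_1} = S(\beta_{v_0}(w), \beta_{v_0}(w))$ and $M_{T_2} = S(\beta_{v_d}(w), \beta_{v_d}(w))$, so
$$\alpha(e,z_1) - \alpha(e,z_2) = \bigl(\alpha(e,z_1) - \alpha(e,z)\bigr) + \bigl(\alpha(e,z) - \alpha(e,z_2)\bigr) \in M_{T_1} + M_{T_2} = E(w).$$
The step I expect to require the most care is the final identification $E_k = S(Z_k, Z_k)$ and its transport via $a$; everything else is a direct translation of the Proposition \ref{prop:easy} strategy, with the $k$-vertical uniqueness of lifts replacing the $0$-vertical case.
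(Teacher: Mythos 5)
Your proof is correct and follows essentially the same route as the paper's: use the assumptions on $z$ to show $\alpha(e,z)-\alpha(e,z_i)\in M_{\hat\phi(T_{z,z_i})}$ and then compute the stabilizer subgroups via Proposition~\ref{prop:errors}(4). You have usefully unpacked two points the paper leaves implicit, namely the uniqueness-of-lifts argument from Proposition~\ref{prop:lifts of k-vertical chains}(2) underlying the containment $\alpha(e,z_i)^{-1}\alpha(e,z)\in M_{T_i}$, and the identification $a^{-1}E_k a^{-*}=S(a^{-1}Z_k,a^{-1}Z_k)$, which the paper dismisses as ``easy to check.''
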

\begin{proof}By the assumption on $z$ we have that  $\hat \phi(T_{ez,ez_1})$ and $\hat \phi(T_{z,z_1})$ project to the same $(m,k)$-circle, and in particular we get that 
 $$\alpha(e,z)-\alpha(e,z_1)\in M_{\hat\phi( T_{z,z_1})}.$$
 In the same way one gets that 
$$\alpha(e,z)-\alpha(e,z_2)\in M_{\hat\phi (T_{z,z_2})}.$$
It follows from Proposition \ref{prop:errors} that if $g_i\in\GL(m)$ is such that $g_1\beta_{v_0}(\phi(w))=\<e_1,\ldots, e_k\>$  (resp. $g_2\beta_{v_d}(\phi(w))=\<e_1,\ldots, e_k\>$) we have that  $M_{\hat\phi (T_{w,z_i})}=i(g_i^{-1}E_kg_i^{-*})$, and this proves our first claim since it is easy to check, from the definition of the set $E(\phi(w))$ that $$E(\phi(w))=g_1^{-1}E_kg_1^{-*}+g_2^{-1}E_kg_2^{-*}.$$
\end{proof}

 In particular it is enough to show that, for almost every pair $z_1,z_2$ on a vertical chain, the intersection $\bigcap_{z\in \Aa_{z_1,z_2,e}}E(\phi(z))=\{0\}$. In fact this would imply that the restriction of $\alpha$ to almost every chain essentially doesn't depend on  the choice of the point, hence coincides with a measurable homomorphism. In order to do this we define another subgroup of $M_{v_\infty}$ associated to a point $w\in \Dd_{v_0,v_d}$. The information associated to $w$ will be
 $$I(w)=S\left((\beta_{v_0}(w)^\bot, \beta_{v_d}(w)^\bot\right)$$
 Here the orthogonals are considered with respect to the standard Hermitian form on $v_\infty=\C^m$. It is easy to verify that $I(w)$ is contained in the orthogonal to $E(w)$ with respect to the orthogonal form on $\fru(m)$ given by $H(A,B)=\tr(A^*B)$.
 
 We postpone the proof of the following technical lemma to the next section:
 \begin{lem}\label{cor:aeE(z)cuts}
 For every proper subspace $L$ of $\mathfrak u (m)$ the set $C(L)=\{z\in \Dd_{v_0,v_d}^{v_\infty}|\;I(z)\subseteq L\}$ is a proper Zariski closed subset of $\Dd_{v_0,v_d}^{v_\infty}\subseteq\Ss_{m,n}$.
\end{lem}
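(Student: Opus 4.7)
I would split the proof into two parts: Zariski closedness of $C(L)$, and its properness.

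\textbf{Closedness.} The assignment $w \mapsto (\beta_{v_0}(w), \beta_{v_d}(w))$ into $\Gr_k(v_\infty) \times \Gr_k(v_\infty)$ is a morphism of algebraic varieties, since it is assembled from spans with the fixed subspaces $v_0, v_d$, intersection with the fixed subspace $v_\infty$, and orthogonal complementation in $\C^m$. Given bases of subspaces $U_1, U_2 \subseteq \C^m$, the generators $u_1 u_2^* - u_2 u_1^*$ of $S(U_1, U_2) \subseteq \fru(m)$ depend polynomially on these bases, and the condition that each generator lie in the fixed subspace $L$ is closed. Pulling back, $C(L)$ is Zariski closed in $\Dd_{v_0,v_d}^{v_\infty}$.

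\textbf{Properness.} Since $L$ is an arbitrary proper subspace, it is enough to show that $\bigcup_{w} I(w)$ spans $\fru(m)$: indeed any such $L$ then fails to contain some $I(w)$. The matrices $z_1 z_2^* - z_2 z_1^*$ for $z_1, z_2 \in \C^m$ span $\fru(m)$ (taking coordinate and imaginary-coordinate vectors yields $E_{jk} - E_{kj}$, $-i(E_{jk} + E_{kj})$, and $-2iE_{jj}$). It therefore suffices to show: for every $z_1, z_2 \in \C^m$ there exists $w \in \Dd_{v_0,v_d}^{v_\infty}$ with $z_1 \in \beta_{v_0}(w)^\bot$ and $z_2 \in \beta_{v_d}(w)^\bot$.

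Parametrizing $w = (X, Y) \in M((n-m)\times m, \C) \times \fru(m)$ in the Heisenberg chart $\Hh^{m,n}(v_\infty)$ of Section \ref{ssec:Heisenberg}, and using that $v_d$ has Heisenberg coordinates $(0, d^{-1})$, a direct computation of $\langle v_0, w\rangle \cap v_\infty$ and $\langle v_d, w\rangle \cap v_\infty$ yields
\[
\beta_{v_0}(w) = Y(\ker X), \qquad \beta_{v_d}(w) = (Y - d^{-1})(\ker X).
\]
Since $Y$ and $Y - d^{-1}$ are antihermitian and $(\ker X)^\bot = \Im(X^*)$, the orthogonality conditions translate into
\[
z_1 \perp \beta_{v_0}(w) \iff Yz_1 \in \Im(X^*), \qquad z_2 \perp \beta_{v_d}(w) \iff (Y - d^{-1})z_2 \in \Im(X^*).
\]
With $z_1, z_2$ fixed, I would choose $Y$ generic antihermitian and then $X$ so that the $(n-m)$-dimensional subspace $\Im(X^*)$ contains both $Yz_1$ and $(Y - d^{-1})z_2$. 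Genericity of $(X,Y)$ places the resulting $w$ in the Zariski open subset $\Dd_{v_0,v_d}^{v_\infty}$ and realizes $z_1 z_2^* - z_2 z_1^* \in I(w)$.

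\textbf{Main obstacle.} The case $n - m \geq 2$ is transparent, since $\Im(X^*)$ can contain two independent prescribed vectors. The delicate step is the boundary case $n = m+1$, where $\Im(X^*)$ is a line and one is forced to arrange $Yz_1 \parallel (Y - d^{-1})z_2$; this requires solving for $Y \in \fru(m)$ and $\lambda \in \C$ with $Y(z_2 - \lambda z_1) = d^{-1}z_2$, tuning $\lambda$ to meet the antihermiticity compatibility $\langle d^{-1}z_2, z_2 - \lambda z_1\rangle \in i\R$ (which is feasible because $\langle d^{-1}z_2, z_2\rangle$ is already purely imaginary). Harmonizing this with the open conditions defining $\Dd_{v_0,v_d}^{v_\infty}$ is where I expect the main technical care to be required.
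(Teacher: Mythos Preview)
Your closedness argument is essentially the paper's. For properness you take a genuinely different route. The paper factors through Proposition~\ref{prop:betasurj}, which asserts that $\beta=\beta_{v_0}\times\beta_{v_d}:\Dd_{v_0,v_d}^{v_\infty}\to\Gr_k(v_\infty)^2$ is \emph{surjective}; that proposition is proven not in the Heisenberg chart at $v_\infty$ but via the projections $\pi_{v_0}\times\pi_{v_d}$ to $W_{v_0}\times W_{v_d}$, where the problem becomes constructing matrices $A_0,A_1\in M(l\times m,\C)$ with prescribed kernels and $A_1A_0^*\in\Id+\U(l)$ (Lemma~\ref{lem:4.15}). Once surjectivity is known, properness of $C(L)$ follows immediately from the linear-algebra fact that the $z_1z_2^*-z_2z_1^*$ span $\fru(m)$, which you also verify. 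Your approach instead computes $\beta_{v_0}(w)=Y(\ker X)$ and $\beta_{v_d}(w)=(Y-d^{-1})(\ker X)$ directly in Heisenberg coordinates and solves for $(X,Y)$ given $z_1,z_2$; this bypasses Lemma~\ref{lem:4.15} and Proposition~\ref{prop:betasurj} entirely, at the cost of a case split on $l$ and the need to check by hand that the constrained family meets the open locus $\Dd_{v_0,v_d}^{v_\infty}$. Your $l=1$ sketch is correct: the compatibility $\langle d^{-1}z_2,\,z_2-\lambda z_1\rangle\in i\R$ is exactly the obstruction to extending $Y(z_2-\lambda z_1)=d^{-1}z_2$ antihermitianly, and since $\langle d^{-1}z_2,z_2\rangle\in i\R$ this cuts out a real line of admissible $\lambda$. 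To finish, note that the transversality conditions $w\pitchfork v_0$ and $w\pitchfork v_d$ translate in your coordinates to $\ker X\cap\ker Y=0$ and $\ker X\cap\ker(Y-d^{-1})=0$, both generically satisfied in the $(m-1)^2$-dimensional affine family of $Y$'s you obtain. The paper's route buys a cleaner, reusable geometric statement (full surjectivity of $\beta$); yours is more self-contained for this single lemma.
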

 We now conclude the proof of Theorem \ref{thm:restriction rational} assuming Lemma \ref{cor:aeE(z)cuts}.
 \begin{proof}[Proof of Theorem \ref{thm:restriction rational} ]
We choose $m^2$ points $w_1,\ldots,w_{m^2}$ in $\deH^p$ such that $\<I(\phi(w))\>=\fru(m)$: we work by induction and assume that there exist $j$ points $w_1,\ldots, w_j$ with $\dim L_j=\dim\<I(\phi(w_i))|\;i\leq j\>\geq j$. If the set  $L_j$
is equal to the whole $\mathfrak u(m)$ we are done. Otherwise it follows from Lemma \ref{cor:aeE(z)cuts} that the subset $C(L_j)$ of $\Dd_{v_0,v_d}^{v_\infty}$  is a proper Zariski closed subset of $\Dd_{v_0,v_d}^{v_\infty}$. 

In particular, since $\phi$ is Zariski dense, its essential image cannot be contained in $C(L_j)\cup (\Ss_{m,n}\setminus\Dd_{v_0,v_d}^{v_\infty})$ that is a Zariski closed subset of $\Ss_{m,n}$. Hence we can find a point $w_{j+1}$ in the full measure set $\Aa_{z_1,z_2,e}$ such that $I(\phi(w_{j+1}))$ is not contained in $L_j$, and this implies that $L_{j+1}=\< I(\phi(w_i))|\;i\leq j+1\>$ strictly contains  $L_j$, hence has dimension strictly bigger than $j$. This completes the proof of Theorem \ref{thm:restriction rational}.
\end{proof}

\subsection{Possible errors}\label{ssec:4.3}
 
A crucial step in the proof of Lemma \ref{cor:aeE(z)cuts} is to show that the map $\beta=\beta_{v_0}\times\beta_{v_d}:\Dd_{v_0,v_d}\subseteq\Ss_{m,n}\to\Gr_k(v_\infty)^2$ is surjective (cfr. Proposition \ref{prop:betasurj}). As a preparation for this result we give a parametrization of the image of $\Dd_{v_0,v_d}$ under the map $\pi_{v_0}\times\pi_{v_d}:\Dd_{v_0,v_d}\to W_{v_0}\times W_{v_d}$. It is easy to check that explicit parametrizations of $W_{v_0}$ and $W_{v_d}$ are given by 
 $$\begin{array}{ccc}\begin{array}{ccc}M(l\times m,\C)&\to&W_{v_0}\\A_0&\to&\bpm 0\\ \Id_l\\ A_0^*\epm^\bot\end{array}&&\begin{array}{ccc}M(l\times m,\C)&\to&W_{v_d}\\A_1&\to&\bpm A_1^*\\ \Id_l\\ dA_1^*\epm^\bot\end{array}\end{array}$$
 Moreover a point $A_i$ in $W_{v_i}$ correspond to a $k$-vertical chain if ${\rm rk}(A_i)=m-k$. 
 This allows us to give an explicit description of the image:
 \begin{lem}\label{lem:4.15}
Under the parametrizations above, the image of  the map $\pi_{v_0}\times\pi_{v_d}:\Dd_{v_0,v_d}^{v_\infty}\to W_{v_0}\times W_{v_d}$ is the closed subset $\Cc_{v_0,v_d}$ of $M(l\times m,\C)\times M(l\times m,\C)$ defined by  
$$\Cc_{v_0,v_d}=\left\{(A_0,A_1)\left|\begin{array}{l}A_1A_0^*\in\Id+\U(m)\\
                             A_0,A_1 \text{ have maximal rank}
                            \end{array}
\right.\right\}.$$
\end{lem}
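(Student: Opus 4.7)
The plan is to work in the Heisenberg parametrization and translate the conditions defining $\Dd_{v_0,v_d}^{v_\infty}$ into explicit matrix identities. Writing $w=\bsm Z\\ X\\ \Id\esm$ with $Z+Z^*=X^*X$, I will first observe that transversality of $w$ to $v_0$ forces $Z$ to be invertible (using the inclusion $\ker Z\subseteq\ker X$, which follows from $X^*X=Z+Z^*$), and that transversality to $v_d$ forces $\Id-dZ$ to be invertible (using $d^*=-d$), while the dimension condition $\dim\<v_0,v_d,w\>=m+n$ forces $X$ to have maximal row rank $l=n-m$. A direct computation of $\<v_0,w\>^\bot$ and $\<v_d,w\>^\bot$ in the given parametrizations then yields the explicit formulas $\pi_{v_0}(w)=A_0=XZ^{-1}$ and $\pi_{v_d}(w)=A_1=X(\Id-dZ)^{-1}$, from which the maximality of $\rk A_i$ is automatic.

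The central step is the relation $A_1A_0^*\in\Id+\U(n-m)$. I will argue geometrically: the subspaces $V_0^\bot=\<v_0,w\>^\bot$ and $V_1^\bot=\<v_d,w\>^\bot$ are both $(n-m)$-dimensional and $h$-negative definite, and both sit inside $w^\bot$. Since $w$ is totally isotropic and is the radical of $h|_{w^\bot}$, each of $V_0^\bot$ and $V_1^\bot$ projects isomorphically onto the negative definite quotient $w^\bot/w$. The resulting isometry $T:V_0^\bot\to V_1^\bot$, characterized by $T(v)\equiv v\pmod w$, then satisfies $h(T(v),u)=h(v,u)$ for every $u\in V_1^\bot$, because $V_1^\bot\subseteq w^\bot$. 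A one-line computation in the chosen bases $\bsm 0\\ \Id\\ A_0^*\esm$ and $\bsm A_1^*\\ \Id\\ dA_1^*\esm$ (both of which make $h$ equal to $-\Id$) shows that the pairing matrix $h(V_0^\bot,V_1^\bot)$ equals $A_0A_1^*-\Id$, so the intertwining property forces this pairing to equal $-M^*$ where $M$ is the matrix of $T$. Since $T$ is an isometry between two copies of $-\Id$, the matrix $M=\Id-A_1A_0^*$ is unitary, yielding the desired condition.

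For the reverse inclusion I will invert this construction. Given $(A_0,A_1)\in\Cc_{v_0,v_d}$, multiplying $Z+Z^*=X^*X=Z^*A_0^*A_0Z$ by $Z^{-1}$ and $Z^{-*}$ shows the isotropy equation to be equivalent to $Z^{-1}+Z^{-*}=A_0^*A_0$, so I parametrize candidates $Z$ by writing $Z^{-1}=\tfrac12 A_0^*A_0+iS$ with $S\in\Her(m)$. The remaining equation $(A_0+A_1d)Z=A_1$ then becomes an inhomogeneous linear equation $A_1S=B$ for an explicit $B$ depending on $A_0,A_1,d$ and $U:=A_1A_0^*-\Id$. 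The main obstacle is to exhibit a Hermitian solution $S$: by a short dimension argument, the image of the map $S\mapsto A_1S$ restricted to $\Her(m)$ is exactly $\{B:BA_1^*=A_1B^*\}$, and the identities $A_1A_0^*=\Id+U$, $A_0A_1^*=\Id+U^*$ together with $U^*U=\Id$ combine to show that the specific $B$ arising above satisfies this self-adjointness condition. The resulting $Z$ is invertible with $\Id-dZ$ invertible on a Zariski open dense subset of the space of solutions, producing a genuine $w\in\Dd_{v_0,v_d}^{v_\infty}$ whose image is $(A_0,A_1)$; the fiber is the $k^2$-dimensional affine space of Hermitian $S$ with $A_1S=0$, matching the expected dimension count $\dim\Dd_{v_0,v_d}^{v_\infty}-\dim\Cc_{v_0,v_d}=k^2$.
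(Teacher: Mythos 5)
Your argument is correct, and it takes a genuinely different route from the paper. The paper's proof is a single ``iff'' computation: it writes down the $2l\times 2l$ Gram matrix of $h$ on the span $\langle V_0^\bot,V_1^\bot\rangle=(V_0\cap V_1)^\bot$, namely $\bsm -\Id & A_0A_1^*-\Id\\ A_1A_0^*-\Id & -\Id\esm$, and observes that this has signature $(0,l)$ (equivalently, $V_0\cap V_1$ contains a maximal isotropic subspace) exactly when $(A_0A_1^*-\Id)(A_1A_0^*-\Id)=\Id$; the same equivalence then handles surjectivity, modulo checking that the resulting maximal isotropic subspace is transverse to $v_0$, $v_d$ and $\langle v_0,v_d\rangle$ when $A_0,A_1$ have full rank. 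You instead split necessity and sufficiency and argue each in Heisenberg coordinates. For necessity, your isometry $T:V_0^\bot\to V_1^\bot$ induced by the quotient $w^\bot/w$ is a clean conceptual replacement for the signature count: it explains at once why $\Id-A_1A_0^*$ is the matrix of a unitary map, rather than extracting unitarity from a Schur-complement computation. For sufficiency, you give an explicit construction of the preimage by parametrizing $Z^{-1}=\tfrac12 A_0^*A_0+iS$ and solving the inhomogeneous linear system $A_1S=B$ over $\Her(m)$; I checked your self-adjointness verification $BA_1^*-A_1B^*=-i(\Id-UU^*)=0$, which does use $U^*U=\Id\Rightarrow UU^*=\Id$ (valid since $U$ is a square $l\times l$ matrix), and your dimension count for the image of $S\mapsto A_1S$ is right. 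The payoff of your route is that it produces explicit coordinates $(A_0,A_1)=(XZ^{-1},X(\Id-dZ)^{-1})$ on $\Dd^{v_\infty}_{v_0,v_d}$ and a concrete description of the $k^2$-dimensional fibres, which the paper's proof does not supply; the cost is that it is substantially longer and requires several transversality reductions (invertibility of $Z$ and $\Id-dZ$, full rank of $X$) that the paper's invariant formulation sidesteps. One small remark: the statement as printed has a typo, $\U(m)$ should read $\U(l)=\U(n-m)$ since $A_1A_0^*$ is an $l\times l$ matrix, and your write-up correctly uses $\U(n-m)$.
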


\begin{proof}
We already observed that each point $w$ in $\Dd_{v_0,v_d}$ uniquely determines two $k$-vertical chains $T_{w,v_0}$, $T_{w,v_d}$. In particular for each pair $(A_0,A_1)$ in the image of $\pi_{v_0}\times\pi_{v_d}$ we have that $A_i$ has maximal rank. We will now show that $A_1A_0^*\in\Id+\U(m)$ if an only if the intersection of linear subspaces associated to the two $m$-chains contains a maximal isotropic subspace.

 Two $m$-chains $T_0$, $T_1$ intersect in $\Ss_{m,n}$ if and only if the intersection $V_0\cap V_1$ of their underlying vector spaces $V_0$, $V_1$ contains a maximal isotropic subspace. In turn this is equivalent to the requirement that $(V_0\cap V_1)^\bot$ has signature $(0,l)$. Indeed, since $V_0^\bot$ has signature $(0,l)$ and is contained in  $(V_0\cap V_1)^\bot$, we get that the signature of any subspace of $(V_0\cap V_1)^\bot$ is $(k_1,l+k_2)$ for some $k_1, k_2$. On the other hand if $V_0\cap V_1$ contains a maximal isotropic subspace  $z$, then $(V_0\cap V_1)^\bot\subseteq z^\bot$ and the latter space has signature $(0,l)$. In particular the signature of $(V_0\cap V_1)^\bot$ would be $(0,l)$, and clearly the orthogonal of a subspace of signature $(0,l)$ contains a maximal isotropic subspace.

 Since $(V_0\cap V_1)^\bot=\< V_0^\bot, V_1^\bot\>$, we are left to check that the requirement that signature of this latter subspace is $(0,l)$ is equivalent to the requirement that $A_1A_0^*$ belongs to $\Id+ U(l)$.
 If now we pick a pair $(A_0,A_1)\in M(l\times m,\C)\times M(l\times m,\C)$ representing a pair of subspaces $(V_0,V_1)\in W_{v_0}\times W_{v_d}$ we have that the subspace $(V_0\cap V_1)^\bot$ is spanned by the columns of the matrix
 $$\bpm0&A_1^*\\\Id&\Id\\A_0^*&dA_1^*\epm.$$
 It is easy to compute the restriction of $h$ to the given generating system of $(V_0\cap V_1)^\bot$:
 $$\begin{array}{c}\bpm0&\Id&A_0\\A_1&\Id&-A_1d\epm\bpm&&\Id\\&-\Id&\\\Id&&\epm\bpm0&A_1^*\\\Id&\Id\\A_0^*&dA_1^*\epm=\\
 \bpm0&\Id&A_0\\A_1&\Id&-A_1d\epm\bpm A_0^*&dA_1^*\\-\Id&-\Id\\0&A_1^*\epm=\bpm-\Id&A_0A_1^*-\Id\\A_1A_0^*-\Id&-\Id\epm
 \end{array}
 $$
 The latter matrix is negative semidefinite and has rank $l$ if and only if \begin{equation}\label{eqn:1}
 A_0A_1^*A_1A_0^*-A_1A_0^*-A_0A_1^*=(A_0A_1^*-\Id)(A_1A_0^*-\Id)-\Id=0.
 \end{equation} 
 In this case the restriction of $ h$ to $(V_0\cap V_1)^\bot$ has signature $(0,l)$.
 
 The intersection $V_0\cap V_1$ contains maximal isotropic subspaces that are transverse to $v_0$ and $v_d$ if and only if the radical of $V_0\cap V_1$, which coincides with the radical of $(V_0\cap V_1)^\bot$, is transverse to both subspaces. It is easy to verify that this is always the case if $A_0$ and $A_1$ have maximal rank.
 \end{proof}
 
We now turn to the analysis of the map 
$$\begin{array}{cccc}\beta:&\Dd_{v_0,v_d}^{v_\infty}\subseteq\Ss_{m,n}&\to&\Gr_k(v_\infty)^2\\&z&\to&(\<v_0,z\>\cap v_\infty,\<v_d,z\>\cap v_\infty).\end{array}$$

\begin{prop}\label{prop:betasurj}
The map $\beta$ is surjective.
\end{prop}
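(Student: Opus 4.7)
The plan is to reduce surjectivity of $\beta$ to an explicit matrix problem via the parametrization of Lemma \ref{lem:4.15} and then solve that problem by direct construction. First I would make $\beta$ explicit in this parametrization. For $z\in\Dd_{v_0,v_d}^{v_\infty}$ corresponding to $(A_0,A_1)\in\Cc_{v_0,v_d}$, a direct computation with the defining equations of $V_0=\bsm0\\ \Id\\A_0^*\esm^\bot$ and $V_d=\bsm A_1^*\\ \Id\\ dA_1^*\esm^\bot$ shows
\[
V_0=\{(w_1,A_0w_1,w_3)^T\mid w_1,w_3\in\C^m\},\quad V_d=\{(w_1,A_1(d^*w_1+w_3),w_3)^T\mid w_1,w_3\in\C^m\}.
\]
Intersecting with $v_\infty$, identified with $\C^m$ by its first $m$-block, yields
\[
\beta_{v_0}(z)=\ker A_0,\qquad \beta_{v_d}(z)=d^{-*}\ker A_1.
\]

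Since $d^{-*}$ is an invertible linear map on $\C^m$, setting $Z_2':=d^*Z_2$, surjectivity of $\beta$ reduces to finding, for every pair $(Z_1,Z_2')$ of $k$-planes in $\C^m$, matrices $A_0,A_1\in M(l\times m,\C)$ of rank $l$ with $\ker A_0=Z_1$, $\ker A_1=Z_2'$, and satisfying the compatibility condition of Lemma \ref{lem:4.15}, namely $A_1A_0^*-\Id_l\in\U(l)$. To parametrize admissible pairs I would fix linear isomorphisms $\iota_0\colon\C^l\to Z_1^\bot$ and $\iota_1\colon\C^l\to(Z_2')^\bot$, and denote by $P_0,P_1\colon\C^m\to\C^l$ the compositions of orthogonal projection onto $Z_1^\bot$ (resp.\ $(Z_2')^\bot$) with $\iota_0^{-1}$ (resp.\ $\iota_1^{-1}$). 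Every $A_0$ with $\ker A_0=Z_1$ has the form $A_0=g_0P_0$ for a unique $g_0\in\GL_l(\C)$, and similarly $A_1=g_1P_1$, so
\[
A_1A_0^*=g_1Mg_0^*,\qquad M:=P_1P_0^*,
\]
is a fixed $l\times l$ matrix of rank $r=l-\dim(Z_1^\bot\cap Z_2')$.

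The last step is to show that $(g_0,g_1)$ can be chosen so that $g_1Mg_0^*$ lies in $\Id+\U(l)$. Since rank is a complete invariant for the $\GL_l\times\GL_l$-action $(g_0,g_1)\cdot X=g_1Xg_0^*$ on $M(l\times l,\C)$, the product $g_1Mg_0^*$ sweeps out all $l\times l$ matrices of rank exactly $r$. It therefore suffices to exhibit one element of $\Id+\U(l)$ of rank $r$: taking $U:=\mathrm{diag}(\Id_r,-\Id_{l-r})\in\U(l)$, the matrix $\Id+U=\mathrm{diag}(2\Id_r,0)$ has rank exactly $r$, after which one solves for $g_0,g_1\in\GL_l$. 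The point requiring care is the non-generic case $Z_1^\bot\cap Z_2'\neq 0$, where $M$ drops rank and one cannot demand $A_1A_0^*=2\Id$; the argument handles all cases uniformly because $\Id+\U(l)$ contains matrices of every rank from $0$ to $l$.
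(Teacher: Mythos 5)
Your proof is correct, and it takes a cleaner route than the paper's. Both proofs reduce via Lemma \ref{lem:4.15} to constructing rank-$l$ matrices $A_0,A_1\in M(l\times m,\C)$ with prescribed kernels and $A_1A_0^*\in\Id+\U(l)$, but where the paper proceeds by an explicit matrix construction split into the ``trivial intersection'' and ``non-trivial intersection'' cases (and handles the second only by a one-line reduction), you observe that the condition depends only on the $\GL_l\times\GL_l$-orbit of the fixed matrix $M=P_1P_0^*$, that rank is a complete invariant for this action, and that $\Id+\U(l)$ contains matrices of every rank from $0$ to $l$. This handles all intersection dimensions uniformly, avoids the paper's case analysis, and replaces the paper's ad-hoc choices $\wt A_1,\wt A_2$ with a transparent structural argument. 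You also correctly track that $\beta_{v_d}(z)=d^{-*}\ker A_1$ rather than $\ker A_1$ as the paper asserts; this is a minor imprecision in the paper that, as you note, is irrelevant for surjectivity since $d^{-*}$ is invertible.

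One small point of care that your write-up handles implicitly but could state explicitly: the pair $(A_0,A_1)=(g_0P_0,g_1P_1)$ you produce automatically has both components of maximal rank (since $g_i\in\GL_l$), so it genuinely lies in $\Cc_{v_0,v_d}$ and therefore, by Lemma \ref{lem:4.15}, lifts to a point $z\in\Dd_{v_0,v_d}^{v_\infty}$. With that observation spelled out, the argument is complete.
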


\begin{proof}
 If we denote by $\zeta$ the uniquely defined map with the property that  $\beta=\zeta\circ(\pi_{v_0}\times\pi_{v_d})$, then it is easy to check that the map $\zeta$ has the following expression, with respect to the coordinates described above:
 
 $$\begin{array}{cccc}\zeta:&W_{v_0}\times W_{v_d}&\to&\Gr_k(v_\infty)^2\\ &(A_0,A_1)&\mapsto &(\ker(A_0), \ker(A_1)).\end{array}$$

 In order to conclude the proof it is enough to show that any pair $(V_0,V_1)$ of $k$-dimensional subspaces of $v_\infty$ can be realized as the kernels of a pair of matrices satisfying Equation \ref{eqn:1}.
 
We first consider the case in which the subspaces $V_0,V_1$ intersect trivially, of course this can only happen if $k\leq l$.
In this case there exists an element $g\in \U(m)$ such that  $gV_0=\wt V_0=\<e_1\ldots,e_k\>$ and that $\wt V_1=gV_1$ is spanned by the columns of the matrix $\bsm B\\\Id_k\\0\esm$ where $B$ is a matrix in $M(k\times k,\C)$. Clearly $V_i$  is the kernel of $A_i$ if and only if $\wt V_i$ is the kernel of $\wt A_i=A_i g^{-1}$ and $A_ig^{-1}$ satisfies the equation \ref{eqn:1} if and only if $A_i$ does. In particular it is enough to exhibit matrices $\wt A_i$ whose kernel is $\wt V_i$.

Let us first notice that, for any matrix $B\in M(k\times k,\R)$ there exists a matrix $X\in \GL_k(\C)$ such that $XB$ is a diagonal matrix $D$ whose elements are only 0 or 1. Let us now consider the matrices 
$$\begin{array}{cc}
\begin{array}{rcl}
\wt A_1^*=&\left[\begin{array}{ccc}0&2\Id&0\\0&0&2\Id\end{array}\right]&\hspace{-10pt}\begin{array}{l}_k\\_{l-k}\end{array}\\
\end{array}
&
\begin{array}{rcl}
\wt A_2^*=&\left[\begin{array}{ccc}X&D&0\\0&0&\Id\end{array}\right]&\hspace{-10pt}\begin{array}{l}_k\\_{l-k}\end{array}\\
\end{array}
\end{array}
$$

By construction $\wt V_1$ is the kernel of $\wt A_1$ and $\wt V_2$ is the kernel of $\wt A_2$, moreover we have that $\wt A_1\wt A_2^*$ satisfies Equation \ref{eqn:1}:
$$\wt A_1^*\wt A_2^=\bpm0&2\Id&0\\0&0&2\Id\epm\bpm X^*&0\\D^*&0\\0&\Id\epm=\bpm2D^*&0\\0&2\Id\epm\in \Id+U(m).$$ This implies that there is a point $z\in\Ss_{m,n}$ with $\beta(z)=(\wt V_0,\wt V_1)$.

The general case, in which the intersection of $V_i$ is not trivial, is analogous: we can assume, up to the $\U(m)$ action that $V_0\cap V_1=\<e_1,\ldots,e_s\>$ and we can restrict to the orthogonal to  $V_0\cap V_1$ with respect to the standard Hermitian form.
\end{proof}
 
We can now prove Lemma \ref{cor:aeE(z)cuts}.
\begin{proof}[Proof of Lemma \ref{cor:aeE(z)cuts}]
 The subspace $C(L)$ is Zariski closed since the subspaces of $\mathfrak u(m)$ that are contained in $L$ form a Zariski closed subset of the Grassmanian $\Gr(\fru(m))$ of the vector subspaces of $\fru(m)$, moreover the subspace $I(z)$ is obtained as the composition $I(z)=S\circ \beta$ of two regular maps.
 
 In order to verify that $C(L)$ is a proper subset, unless $L=\mathfrak u(m)$, it is enough to verify that the subspaces of the form $S(Z_1^\bot,Z_2^\bot)$ with $Z_i$ transverse subspaces span the whole $\mathfrak u(m)$. Once this is proven, the result follows from the the surjectivity of $\beta$: since $\beta$ is surjective, the preimage of a proper Zariski closed subset is a proper Zariski closed subset. The fact that the span
 $$\<z_1z_2^*-z_2z_1^*|\;z_1,z_2\in \C^{m} \text{ linearly independent}\>$$
 is the whole $\mathfrak u(m)$ follows from the fact that every matrix of the form $iz_1z_1^*$ is in the span, since such a matrix can be obtained as the difference $z_1(z_2-iz_1)^*-(z_2-iz_1)z_1^*-(z_1z_2^*-z_2z_1^*)$.
\end{proof}

\section{The boundary map is rational}\label{sec:reduction}
In this section we will show that a Zariski dense, chain geometry preserving map $\phi:\deH^p\to\Ss_{m,n}$ whose restriction to almost every chain is rational, coincides almost everywhere with a rational map.

Assume that the chain geometry preserving map $\phi$ is rational and let us fix a point $x$. Since the projection $\pi_{\phi(x)}:\Ss_{m,n}^{\phi(x)}\to W_{\phi(x)}$ is regular we get that the map $\phi_x:W_x\to W_{\phi(x)}$ induced by $\phi$ is rational as well. The first result of the section is that the converse holds, namely that if there exist enough many generic points $s_1,\ldots,s_l$ in $\deH^p$ such that $\phi_{s_i}$ is rational, then the original map $\phi$ had to be rational as well.

In what follows we will denote by $l$ the smallest integer bigger than $1+m/(n-m)$.
 \begin{lem}\label{lem:Oo}
  There exists a Zariski open subset $\Oo\subset \Ss_{m,n}^l$, such that for any $(x_1,\ldots,x_l)$ in $\Oo$, there exists a Zariski open subset $\Dd_{x_1,\ldots,x_l}\subset \Ss_{m,n}$ such that for every $z\in \Dd_{x_1,\ldots,x_l}$ we have
  $$\bigcap_{i=1}^l\<z,x_i\>=z.$$
 \end{lem}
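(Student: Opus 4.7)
The plan is to introduce the locus
\[
U \;=\; \bigl\{(x_1,\dots,x_l,z)\in \Ss_{m,n}^{\,l+1}\,:\,\textstyle\bigcap_{i=1}^l\<z,x_i\>=z\bigr\}
\]
and show that it is a non-empty Zariski open subset of $\Ss_{m,n}^{\,l+1}$. Once this is granted, the lemma follows from a routine dominance argument. Indeed, $\Ss_{m,n}$ is a homogeneous space of the Zariski-connected real algebraic group $\SU(m,n)$, hence Zariski irreducible, so $\Ss_{m,n}^{\,l+1}$ is irreducible and the non-empty open set $U$ is Zariski dense. By Chevalley's theorem the projection of $U$ onto the first $l$ factors is a constructible, dense subset of $\Ss_{m,n}^{\,l}$, hence contains a Zariski open dense subset $\Oo$. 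For each $(x_1,\dots,x_l)\in\Oo$, the fiber of the projection $\pi \colon U \to \Ss_{m,n}^{\,l}$ over this point is non-empty and open in $\Ss_{m,n}$ (being the intersection of the open set $U$ with a fiber of $\pi$), and provides the desired $\Dd_{x_1,\dots,x_l}$.

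Openness of $U$ is an immediate application of upper-semicontinuity of the dimension of an intersection of vector subspaces: one always has $z\subseteq\bigcap_i\<z,x_i\>$, so $\dim\bigcap_i\<z,x_i\>\geq m$, and the condition $\dim\bigcap_i\<z,x_i\>=m$ is the non-vanishing of suitable minors of the matrix whose columns form bases for all the $\<z,x_i\>$.

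The one substantive step is therefore exhibiting a single point of $U$. I take $z=v_\infty$, and parametrize points $x_i\in\Ss_{m,n}$ transverse to $v_\infty$ by pairs $(X_i,Y_i)\in M((n-m)\times m,\C)\times\fru(m)$ as in Section~\ref{ssec:Heisenberg}. In the quotient $\C^{m+n}/v_\infty\cong\C^{n-m}\oplus\C^m$ the image of $\<v_\infty,x_i\>$ is the $m$-dimensional graph $\{(X_iv,v):v\in\C^m\}$, which does not depend on $Y_i$. The condition $\bigcap_i\<v_\infty,x_i\>=v_\infty$ is thus equivalent to the requirement that the only $v\in\C^m$ satisfying $X_1v=\cdots=X_lv$ be $v=0$, i.e.\ to the injectivity of the linear map
\[
v\longmapsto\bigl((X_2-X_1)v,\dots,(X_l-X_1)v\bigr)\in\C^{(l-1)(n-m)}.
\]
Since $l>1+m/(n-m)$ gives $(l-1)(n-m)>m$, it suffices to take for instance $X_1=0$ and choose $X_2,\dots,X_l$ whose vertical concatenation has rank $m$ (which is possible as $(l-1)(n-m)\geq m$). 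This produces the required point of $U$.

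The only piece of content is the dimension count in the last paragraph, which is exactly where the hypothesis on $l$ enters; the rest is formal, and I foresee no additional difficulty.
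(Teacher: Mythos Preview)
Your proof is correct and shares the paper's overall framework---define the incidence locus in $\Ss_{m,n}^{l+1}$, show it is non-empty Zariski open, then project---but the non-emptiness step is handled differently. The paper proceeds by an inductive dimension count: fixing $z$, it adds the $x_i$ one at a time and verifies that a generic new $x_{k+1}$ cuts the running intersection $\bigcap_{i\le k}\langle z,x_i\rangle$ down by exactly $n-m$ dimensions, reaching $m$ after $l$ steps. You instead pass to the Heisenberg model at $z=v_\infty$, reduce modulo $v_\infty$, and turn the condition $\bigcap_i\langle v_\infty,x_i\rangle=v_\infty$ into the purely linear-algebraic requirement that the stacked matrix of the $X_i-X_1$ have full column rank; the hypothesis $(l-1)(n-m)\ge m$ then makes an explicit witness immediate.

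Your route is somewhat more concrete and reuses the Heisenberg parametrization already developed in Section~\ref{ssec:Heisenberg}, which is pleasant; the paper's induction is coordinate-free and makes the role of the quantity $2m-(n-m)(k-1)$ transparent at each stage. Either argument yields the lemma without difficulty. One minor remark: your appeal to Chevalley's theorem to extract the open set $\Oo$ is at the same level of informality as the paper's corresponding sentence; if one wants to be careful over $\R$ one should note that the defining condition is the non-vanishing of minors and hence comes from a Zariski open set over $\C$, where Chevalley applies and real points can then be taken.
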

\begin{proof}
 Let us consider the set $\Ff$ of $(l+1)$-tuples $(x_1,\ldots,x_l,z)$ in $\Ss_{m,n}^{l+1}$ with the property that $\bigcap_{i=1}^l\<z,x_i\>=z$. This is a Zariski open subset of $\Ss_{m,n}^{l+1}$: indeed, since $z$ is clearly contained in the intersection, the set $\Ff$ is defined by the equation $\dim\bigcap_{i=1}^l\<z,x_i\>\leq m$. In order to conclude the proof it is enough to show that for each $z\in \Ss_{m,n}$ the set of tuples $(x_1,\ldots,x_l)$ with the property that $(x_1,\ldots,x_l,z)\in\Ff$ is non empty: this implies that the set $\Ff$ is a non empty Zariski open subset, and in particular there must exist a Zariski open subset of $\Ss_{m,n}^l$ consisting of $l$-tuples $(x_1,\ldots,x_l)$ satisfying the hypothesis of the lemma. 
 
 Let us then fix a point $z\in \Ss_{m,n}$. We denote by $\Aa_{z}^k$ the set of $k$-tuples $x=(x_1,\ldots,x_k)$ in $\Ss_{m,n}^z$ such that $\dim\bigcap_{i=1}^k\<z,x_i\>=\max\{2m-(n-m)(k-1),m\}$. In order to conclude the proof it is enough to exhibit, for every $k$-tupla $x$ in $\Aa_z^k$ a non empty subset $\Bb$ of $\Ss_{m,n}$ such that $(x,b)\in\Aa_z^{k+1}$ for each $b$ in $\Bb$. If we denote by $V_k$ the subspace $\bigcap_{i=1}^k\<z,x_i\>$, that has, by our assumption on the tuple $x$, dimension $2m-(n-m)(k-1)$, we can take $\Bb$ to be the Zariski open subset 
 $$\Bb=\{x_{k+1}\in\Ss_{m,n}|x_{k+1}\tra V_k, x_{k+1}\tra z\}.$$
 The set $\Bb$ is not empty since both transversality conditions are non-empty, Zariski open conditions, and for this choice we get
 $$\begin{array}{rl}
 \dim\bigcap_{i=1}^{k+1}\<z,x_i\>&=\dim(V_k\cap \<z,x_{k+1}\>)\\
 &=\dim V_k+\dim \<z,x_{k+1}\>-\dim\<V_k,x_{k+1}\>=\\
 &=2m-(n-m)(k-1)+2m\\&-\min\{m+n,\,2m-(n-m)(k-1)+m\}=\\
 &=\max\{2m-(n-m)k,m\}.
 \end{array}$$
\end{proof}

It is worth remarking that, if $n\geq 2m$, the set $\Ss_{m,n}^{(2)}$ is contained in $\Oo$ and for $x_1,x_2$ transverse the set $\Dd_{x_1,x_2}$ consists of the points $z$ that are transverse to $x_1,x_2$ and $\<x_1,x_2\>$. This is consistent with the notation in Section \ref{ssec:4.3}. In general we will assume (up to restricting $\Dd_{x_1,\ldots,x_l}$ to a smaller Zariski open subset) that each $z$ in $\Dd_{x_1,\ldots,x_l}$ is transverse to $x_i$ for each $i$.

\begin{lem}
Let $(x_1,\ldots,x_l)$ be an $l$-tuple of pairwise transverse points in the set $\Oo$ defined in Lemma \ref{lem:Oo}. 
There exist a quasiprojective subset $\Cc_{x_1,\ldots,x_l}$ of $W_{x_1}\times\ldots\times W_{x_l}$ such that the map $\beta_{x_1,\ldots,x_l}=\pi_{x_1}\times\ldots\times\pi_{x_l}:\Dd_{x_1,\ldots,x_l}\to W_{x_1}\times\ldots\times W_{x_l}$ gives a birational isomorphism.
\end{lem}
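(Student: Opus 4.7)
The map $\beta = \pi_{x_1}\times\cdots\times\pi_{x_l}$ is a morphism of algebraic varieties, because each projection $\pi_{x_i}\colon \Ss_{m,n}^{x_i}\to W_{x_i}$ is regular: recalling from Section~\ref{sec:S_mn} that $W_{x_i}$ parametrizes the $2m$-dimensional subspaces $\langle x_i,z\rangle$ with $x_i < \langle x_i,z\rangle$, the assignment $z\mapsto \langle x_i,z\rangle$ is clearly algebraic. The plan is to show that $\beta$ is injective on $\Dd_{x_1,\ldots,x_l}$, to write down a rational inverse, and then to take $\Cc_{x_1,\ldots,x_l}$ to be the image of a suitable Zariski open subset on which both $\beta$ and its inverse are regular.

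\textbf{Injectivity.} Suppose $z,z'\in \Dd_{x_1,\ldots,x_l}$ satisfy $\beta(z)=\beta(z')$. Then for each $i$ the underlying $2m$-dimensional subspaces coincide: $\langle x_i,z\rangle=\langle x_i,z'\rangle$. In particular $z'\subset \langle x_i,z\rangle$ for every $i$, so
\[
z'\subset \bigcap_{i=1}^{l}\langle x_i,z\rangle = z,
\]
where the last equality uses the defining property of $\Dd_{x_1,\ldots,x_l}$ provided by Lemma~\ref{lem:Oo}. Since $\dim z'=\dim z=m$, we conclude $z=z'$.

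\textbf{Rational inverse.} On the product of Grassmannians $\Gr_{2m}(\C^{m+n})^l$, the assignment $\gamma\colon (V_1,\ldots,V_l)\mapsto \bigcap_{i=1}^{l}V_i$ is a rational map, regular precisely on the open locus where the intersection has the expected dimension $m$. On the image $\beta(\Dd_{x_1,\ldots,x_l})$ this condition is satisfied, and the intersection is automatically isotropic since it sits inside the isotropic subspace $z\in \Ss_{m,n}$; hence $\gamma$ takes values in $\Ss_{m,n}$ there. By construction $\gamma\circ\beta=\mathrm{Id}$ on $\Dd_{x_1,\ldots,x_l}$ and $\beta\circ\gamma=\mathrm{Id}$ on the image.

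\textbf{Conclusion.} The image $\beta(\Dd_{x_1,\ldots,x_l})$ is constructible by Chevalley's theorem, and restricting to the Zariski open subset $\Dd'\subseteq \Dd_{x_1,\ldots,x_l}$ on which the rational inverse $\gamma$ is regular gives a biregular isomorphism between $\Dd'$ and a locally closed, hence quasiprojective, subset $\Cc_{x_1,\ldots,x_l}$ of $W_{x_1}\times\cdots\times W_{x_l}$. This provides the required birational isomorphism. The only subtlety is to verify that the rational inverse $\gamma$ genuinely lands in $\Ss_{m,n}$ on the image and that the locus where the intersection has dimension exactly $m$ is non-empty: both follow directly from Lemma~\ref{lem:Oo}, so no real obstacle arises beyond careful bookkeeping on the Grassmannians involved.
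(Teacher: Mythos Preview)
Your proof is correct and follows essentially the same approach as the paper: both identify the rational inverse as the intersection map $(V_1,\ldots,V_l)\mapsto\bigcap_i V_i$ and use Lemma~\ref{lem:Oo} to see that this recovers $z$. The only cosmetic difference is that the paper describes the target set $\Cc_{x_1,\ldots,x_l}$ explicitly (as tuples whose associated $2m$-spaces meet in an $m$-dimensional isotropic subspace, with a transversality condition) rather than invoking Chevalley's theorem, but the substance is the same.
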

\begin{proof}
We consider the set $\Cc'_{x_1,\ldots,x_l}$ consisting of tuples $(t_1,\ldots,t_l)$ with the property that the associated linear subspaces intersect in an $m$-dimensional isotropic subspace, and that $t_j-\pi_j(x_i)$ has maximal rank for every $i,j$. With this choice $\Cc'_{x_1,\ldots,x_l}$ is quasiprojective since the condition that the intersection has dimension at least $m$ and that the restriction of $h$ to the intersection is degenerate are closed condition (defined by polynomial), the condition that the intersection has dimension at most $m$ is an open condition. The set $\Cc_{x_1,\ldots,x_l}$ is the subset of $\Cc'_{x_1,\ldots,x_l}$ that is the image of $\beta_{x_1,\ldots,x_l}$.

The fact that the map $\beta_{x_1,\ldots,x_l}$ gives a birational isomorphism follows from the fact that a regular inverse to $\beta_{x_1,\ldots,x_l}$ is given by the algebraic map that associates to an $l$-tuple of points their unique intersection. 
\end{proof}
We now have all the ingredients we need to prove the following
\begin{prop}
 Let us assume that for almost every point $x\in\deH^p$ 
 the map $\phi_x$ coincides almost everywhere with a rational map. The same is true for $\phi$.
\end{prop}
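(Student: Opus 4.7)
The plan is to exploit the commuting diagram
$$
\xymatrix{
\deH^p \ar[r]^-{\phi} \ar[d]_-{\pi_{x_1}\times\cdots\times\pi_{x_l}} & \Ss_{m,n} \ar[d]^-{\beta_{\phi(x_1),\ldots,\phi(x_l)}} \\
W_{x_1}\times\cdots\times W_{x_l} \ar[r]^-{\phi_{x_1}\times\cdots\times\phi_{x_l}} & W_{\phi(x_1)}\times\cdots\times W_{\phi(x_l)}
}
$$
for a cleverly chosen $l$-tuple $(x_1,\ldots,x_l)\in(\deH^p)^l$, so that the right vertical arrow is a birational isomorphism onto its image. Inverting it, $\phi$ will be expressed almost everywhere as a composition of rational maps.

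First I would pick the base points $x_1,\ldots,x_l$ inductively. At step $i$, I require $\phi_{x_i}$ to be rational (a full measure condition by hypothesis) and $(\phi(x_1),\ldots,\phi(x_i))$ to lie in the Zariski open subset of $\Ss_{m,n}^i$ analogous to $\Oo$ from Lemma \ref{lem:Oo}. Zariski density of $\phi$ (Proposition \ref{prop:phiZariskidense}) ensures that the $\phi$-preimage of any proper Zariski closed subset of $\Ss_{m,n}^i$ does not have full measure, so at each step the two conditions can be simultaneously arranged on a set of positive measure. For this choice, the preceding lemma guarantees that $\beta_{\phi(x_1),\ldots,\phi(x_l)}:\Dd_{\phi(x_1),\ldots,\phi(x_l)}\to\Cc_{\phi(x_1),\ldots,\phi(x_l)}$ is a birational isomorphism of quasiprojective varieties.

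Next I would verify commutativity of the diagram on a full measure set. By construction of $\phi_{x_i}$ in Lemma \ref{lem:phix}, for each $i$ the identity $\phi_{x_i}(\pi_{x_i}(z))=\pi_{\phi(x_i)}(\phi(z))$ holds on a full measure subset of $\deH^p$; a Fubini argument allows these finitely many subsets to be intersected to a common full measure set on which the diagram commutes. Moreover, since $\Dd_{\phi(x_1),\ldots,\phi(x_l)}$ is Zariski open in $\Ss_{m,n}$ and $\phi$ is Zariski dense, for almost every $z$ the image $\phi(z)$ lies in $\Dd_{\phi(x_1),\ldots,\phi(x_l)}$, so the birational inverse of $\beta_{\phi(x_1),\ldots,\phi(x_l)}$ applies. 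This yields
$$
\phi(z)=\beta_{\phi(x_1),\ldots,\phi(x_l)}^{-1}\bigl(\phi_{x_1}(\pi_{x_1}(z)),\ldots,\phi_{x_l}(\pi_{x_l}(z))\bigr)
$$
for almost every $z\in\deH^p$. Since each $\pi_{x_i}$ is regular, each $\phi_{x_i}$ is rational by assumption, and $\beta_{\phi(x_1),\ldots,\phi(x_l)}^{-1}$ is rational, the right hand side defines a rational map agreeing with $\phi$ almost everywhere.

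The main obstacle I anticipate is the careful choice of the base points $x_1,\ldots,x_l$: one must simultaneously achieve the almost-sure rationality of every $\phi_{x_i}$, the generic position of the image $l$-tuple $(\phi(x_1),\ldots,\phi(x_l))$ in $\Oo$, and the pointwise commutativity of the diagram on a common full measure set of $z$. Once these measure-theoretic compatibilities are handled, the rationality of $\phi$ follows essentially formally from the birational inverse of $\beta_{\phi(x_1),\ldots,\phi(x_l)}$ provided by the preceding lemma.
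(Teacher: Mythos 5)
Your proof is correct and follows essentially the same route as the paper: choose an $l$-tuple of base points so that each $\phi_{x_i}$ is rational and the image tuple lies in $\Oo$, then invert the birational isomorphism $\beta_{\phi(x_1),\ldots,\phi(x_l)}$ from the preceding lemma to express $\phi$ as a composition of rational maps via the commutative diagram. Your treatment is slightly more explicit about the inductive selection of the $x_i$'s and the Fubini intersection of full-measure sets, but the argument is the same.
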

\begin{proof}
Let us fix $l$ points $t_1,\ldots, t_l$ which are generic in the sense of Lemma \ref{lem:1}, which satisfy that $\phi_{t_i}$ coincides almost everywhere with a rational map, and with the additional property that the $l$-tupla $(\phi(t_1),\ldots,\phi(t_l))$ belongs to the set $\Oo$. We can find such points since the map $\phi$ is Zariski dense and the set $\Oo$ is Zariski open. Let us now 
consider the diagram
$$\xymatrix{\deH^p\setminus\{t_1,\ldots,t_l\}\ar[r]^{\phi}\ar[d]_{\pi_{t_1}\times\ldots\times\pi_{t_l}}&\Dd_{\phi(t_1),\ldots,\phi(t_l)}\subseteq \Ss_{m,n}\ar[d]^{\beta_{\phi(t_1),\ldots,\phi(t_l)}}\\ W_{t_1}\times\ldots\times W_{t_n}\ar[r]^-{\phi_{t_1}\times\ldots\times\phi_{t_n}}&\Cc_{\phi(t_1),\ldots,\phi(t_l)}}.$$
A consequence of Lemma \ref{lem:phix} and of the definition of the isomorphisms $\beta_{\phi(t_1),\ldots,\phi(t_l)}$ is that the diagram commutes almost everywhere. In particular, since the isomorphism $\beta_{\phi(t_1),\ldots\phi(t_l)}$ is birational, and $\pi_{t_1}\times\ldots\times\pi_{t_l}$ is rational, we get that $\phi$ coincides almost everywhere with a rational map.
\end{proof}
Let us now fix a point $x$ in $\deH^p$, and identify the space $W_x$ with $\C^{p-1}$. We want to study the map $\phi_x:\C^{p-1}\to W_{\phi(x)}$. 
It follows from Lemma \ref{lem:parametrization of k-vertical chains} restricted to the case $m=1$ that the projections of chains in $\deH^p$ to $\C^{p-1}$ are Euclidean circles $C\subset \C^{p-1}$ (possibly collapsed to points). 

\begin{lem}
 If $x$ is generic in the sense of Lemma \ref{lem:1}, the restriction of $\phi_x$ to almost every Euclidean circle $C$ of $\C^{p-1}$ is rational. 
\end{lem}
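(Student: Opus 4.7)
The plan is to reduce the statement to Theorem \ref{thm:restriction rational} via the commutative diagram of Lemma \ref{lem:phix}. The essential observation is that, in the case $m=1$, the projection $\pi_x$ restricts to an algebraic bijection from any horizontal chain onto its image Euclidean circle, so that rationality of $\phi|_T$ on a horizontal chain $T$ transfers directly to rationality of $\phi_x$ on the Euclidean circle $\pi_x(T)$.

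First I would invoke a Fubini argument analogous to the one used in Lemma \ref{lem:phix}: since the horizontal chains in $\deH^p\setminus\{x\}$ form a smooth bundle over the set of Euclidean circles in $W_x\cong\C^{p-1}$ (via $\pi_x$), and since by Theorem \ref{thm:restriction rational} the full measure set of chains on which $\phi$ is a.e.\ rational meets a.e.\ fiber in a full measure set, for almost every Euclidean circle $C\subset\C^{p-1}$ one can select a horizontal chain $T$ with $\pi_x(T)=C$ such that: (i) $\phi|_T$ coincides almost everywhere with a rational map $\psi_T:T\dashrightarrow\Ss_{m,n}$, and (ii) the diagram of Lemma \ref{lem:phix} commutes almost everywhere on $T$, so that for almost every $y\in T$ one has $\phi_x(\pi_x(y))=\pi_{\phi(x)}(\phi(y))$.

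Next I would show that $\pi_x|_T:T\to C$ is a birational isomorphism. This is the geometric heart of the argument: a horizontal chain and a vertical chain in $\deH^p$ meet in at most one point, because the vertical chain $\pi_x^{-1}(t)$ has underlying linear $2$-space $V_t$ containing $x$, while $T$ is transverse to $x$, so $T\cap \pi_x^{-1}(t)$ corresponds to $m=1$-dimensional isotropic subspaces of $V_T\cap V_t$, which are generically unique. Hence $\pi_x|_T$ is an algebraic bijection of smooth one-dimensional real algebraic varieties, and thus birational; explicitly, the inverse sends $t\in C$ to the unique point of $T\cap \pi_x^{-1}(t)$, which is a regular operation given by intersecting algebraic subvarieties.

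Finally, using the commuting diagram together with the birationality of $\pi_x|_T$, I obtain the identity
\[
\phi_x|_C \;=\; \pi_{\phi(x)}\circ \psi_T \circ (\pi_x|_T)^{-1}
\]
almost everywhere on $C$, which exhibits $\phi_x|_C$ as a composition of a regular map, a rational map, and a birational map. The composition is rational, which is the desired conclusion. The main (very mild) obstacle here is simply the bookkeeping of the "almost every" sets needed so that the Fubini reduction goes through on a lift $T$ for a.e.\ $C$; once this is in place, the algebraic part is immediate from the incidence structure already established in Section \ref{sec:S_mn}.
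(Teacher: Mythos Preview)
Your proposal is correct and follows essentially the same route as the paper. The only cosmetic difference is that where you argue abstractly that $\pi_x|_T$ is a birational isomorphism (via the incidence of a horizontal and a vertical chain), the paper invokes the explicit parametrization of Lemma~\ref{lem:parametrization of k-vertical chains} (case $m=1$, $k=0$) to exhibit directly the algebraic lift $l:C\to T$, which is precisely your $(\pi_x|_T)^{-1}$; the Fubini reduction and the final composition $\phi_x|_C=\pi_{\phi(x)}\circ\psi_T\circ(\pi_x|_T)^{-1}$ are the same in both.
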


\begin{proof}

 It follows from the explicit parametrization of a chain given in Lemma \ref{lem:parametrization of k-vertical chains}  that, whenever a  point $t$ in $\pi_x^{-1}(C)$ is fixed, the lift map $l:C\to T$ is algebraic, where $T$ is the unique lift of $C$ containing $t$.
 
 In particular, if $T$ is a chain such that the restriction of $\phi$ to $T$ coincides almost everywhere with a rational map, the restriction of $\phi_x$ to $C=\pi_x(T)$ coincides almost everywhere with a rational map. We can now use a Fubini based argument to get that, for almost every circle $C$, the restriction of $\phi_x$ to $C$ is rational: for almost every chain $T$, the restriction to $T$ coincides almost everywhere with a rational map, the conclusion follows from the fact that the space of chains that do not contain $x$ is a full measure subset of the space of chains in $\deH^p$ that forms a smooth bundle over the space of Euclidean circles in $\C^{p-1}$.
\end{proof}
 
 An usual Fubini type argument implies now the following
 \begin{cor}\label{cor:phiL}
  For almost every complex affine line $\Ll\subseteq \C^{m}$, for almost every Euclidean circle $C$ contained in $\Ll$, the restriction of $\phi_x$ to $C$ is algebraic. Moreover the same is true for almost every point $p$ in $\Ll$ and almost every circle $C$ containing $p$. 
 \end{cor}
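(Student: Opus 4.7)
The proof proceeds by two successive applications of Fubini's theorem to natural bundle structures; no additional input beyond the previous lemma and the explicit parametrization of chains is required.

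Let $\mathcal{A}$ denote the full-measure set of Euclidean circles $C\subset\C^{p-1}\cong W_x$ furnished by the preceding lemma, on which the restriction $\phi_x|_C$ coincides almost everywhere with a rational map. By Lemma \ref{lem:parametrization of k-vertical chains} specialized to $m=1$, every horizontal chain of $\deH^p$ projects under $\pi_x$ to a Euclidean circle contained in a complex affine line of $\C^{p-1}$: indeed, up to the transitive action of $Q$, the projection of the standard horizontal chain $T_0$ is $\{((1+e^{it}),0,\dots,0)\mid t\in\R\}$, which lies in a complex affine line. Hence $\mathcal{A}$ may be viewed as a full-measure subset of the smooth manifold $\mathcal{B}$ parametrizing pairs $(\Ll,C)$ with $\Ll$ a complex affine line in $\C^{p-1}$ and $C$ a Euclidean circle contained in $\Ll$.

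The first projection $\mathcal{B}\to\mathrm{Gr}$ onto the Grassmannian of complex affine lines in $\C^{p-1}$ realizes $\mathcal{B}$ as a smooth bundle whose fiber over $\Ll$ is the three-dimensional manifold of Euclidean circles in $\Ll\cong\C$. An application of Fubini's theorem to this fibration immediately gives the first assertion: for almost every complex affine line $\Ll$, almost every Euclidean circle $C\subset\Ll$ lies in $\mathcal{A}$, and so $\phi_x|_C$ is rational.

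For the second assertion, introduce the incidence manifold
$$\mathcal{B}^{\{1\}}:=\{(\Ll,C,p)\mid\Ll\text{ a complex affine line},\ C\subset\Ll\text{ a Euclidean circle},\ p\in C\},$$
which is a smooth circle bundle over $\mathcal{B}$; the preimage $\mathcal{A}^{\{1\}}$ of $\mathcal{A}$ therefore has full measure in $\mathcal{B}^{\{1\}}$. The alternative projection $\mathcal{B}^{\{1\}}\to\{(\Ll,p)\mid p\in\Ll\}$, forgetting $C$, presents $\mathcal{B}^{\{1\}}$ as a smooth fibration whose fiber over $(\Ll,p)$ is the two-dimensional space of Euclidean circles in $\Ll$ passing through $p$. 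A second Fubini argument applied to this fibration yields the remaining statement. The argument is entirely routine: the only content is identifying the two bundle structures on $\mathcal{B}^{\{1\}}$ and checking that the projections are genuine smooth fibrations, which is transparent from the explicit coordinate descriptions recorded above.
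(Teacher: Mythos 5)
Your proof is correct and supplies exactly the routine Fubini argument that the paper merely gestures at (the paper gives no proof of this corollary beyond the phrase ``An usual Fubini type argument implies now the following''). You correctly identify the two bundle structures: first the projection from pairs $(\Ll,C)$ to the Grassmannian of complex affine lines, then the two competing fibrations on the incidence variety of triples $(\Ll,C,p)$ with $p\in C$; the full-measure set $\mathcal A$ furnished by the preceding lemma is correctly pushed and pulled through these fibrations. Two small observations: the $\C^m$ in the statement is a typo for $\C^{p-1}\cong W_x$, which you silently and correctly repair; and it is worth noting (as you do) that the circles appearing as projections of chains are precisely the Euclidean circles lying in complex affine lines of $\C^{p-1}$, which is why the parametrizing space is naturally a bundle over the Grassmannian of complex affine lines rather than over the larger space of all real $2$-planes. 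This matches the implicit interpretation in the preceding lemma and in the subsequent lemma where the corollary is used.
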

In order to conclude the proof we will apply many times the following well known lemma that allows to deduce that a map is rational provided that the restriction to sufficiently many subvarieties is rational. Given a map $\phi:A\times B\to C$ and given a point $a\in A$ we denote by $_a\phi:B\to C$ the map $_a\phi(b)=\phi(a,b)$ in the same way, if $b$ is a point in $B$, $^b\phi$ will the note the map $^b\phi(a)=\phi(a,b)$ 

\begin{lem}[{\cite[Theorem 3.4.4]{Zimmer}}]\label{lem:R^nxR^m}
 Let $\phi:\R^{n+m}\to\R$ be a measurable function. Let us consider the splitting $\R^{n+m}=\R^n\times\R^m$. Assume that for almost every $a\in \R^n$ the function $_a\phi:\R^m\to\R$ coincides almost everywhere with a rational function and for almost every $b\in\R^m$ the function $^b\phi:\R^n\to\R$ coincides almost everywhere with a rational function, then $\phi$ coincides almost everywhere with a rational function.
\end{lem}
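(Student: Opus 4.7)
The plan is to first bound the degrees of the rational functions $_a\phi$ uniformly on a positive-measure set of $a\in\R^n$, and then reconstruct $\phi$ globally from finitely many of its horizontal slices via a rational interpolation formula.

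For each integer $d\ge 0$, let $A_d\subseteq\R^n$ be the set of those $a$ for which $_a\phi$ coincides almost everywhere with a rational function of bidegree at most $(d,d)$. The sets $A_d$ are nested with union of full measure, so by countable additivity and an exhaustion argument there exist $d_0$ and a bounded measurable $U\subseteq\R^n$ such that $A_{d_0}\cap U$ has positive Lebesgue measure. A symmetric application of the hypothesis on $^b\phi$ yields a degree $e_0$ and a bounded measurable $V\subseteq\R^m$ for which the set of $b\in V$ with $^b\phi$ rational of bidegree at most $(e_0,e_0)$ has positive measure.

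Next comes interpolation. Any rational function on $\R^m$ of bidegree at most $(d_0,d_0)$ is determined by its values on a sufficiently generic finite subset of cardinality $N=N(d_0,m)$, via an explicit Lagrange-type formula $L(v_1,\ldots,v_N;y)$ that is jointly rational in the sample values $v_i$ and in $y$. I would choose points $b_1,\ldots,b_N$ in general position, lying in the positive-measure subset of $V$ on which each $^{b_i}\phi$ coincides almost everywhere with a rational function $R_i\colon\R^n\to\R$. Then for almost every $a\in A_{d_0}\cap U$ the slice $_a\phi$ equals $L(R_1(a),\ldots,R_N(a);\,\cdot\,)$, so the global rational function
\[
\Phi(a,y)\;:=\;L\bigl(R_1(a),\ldots,R_N(a);\,y\bigr)
\]
agrees almost everywhere with $\phi$ on $(A_{d_0}\cap U)\times\R^m$.

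To globalize: for almost every $b\in\R^m$ the slice $^b\phi$ coincides almost everywhere with a rational function on $\R^n$, which by the previous step agrees with $^b\Phi$ on the positive-measure set $A_{d_0}\cap U$. Two rational functions on $\R^n$ coinciding on a set of positive measure are equal, so $^b\phi=\,^b\Phi$ as rational functions, and Fubini then forces $\phi=\Phi$ almost everywhere on $\R^{n+m}$. The main obstacle is the first step, extracting a uniform degree bound and a measurable choice of interpolation data from the merely measurable hypothesis: one must simultaneously find nodes $b_i$ at which $^{b_i}\phi$ is rational and which form a configuration general enough for the Lagrange-type formula to be well-defined and to recover $_a\phi$ for a generic $a$. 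Once these choices are made coherently, the rationality of $\Phi$ is automatic.
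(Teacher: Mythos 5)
The paper offers no proof of this lemma; it simply cites \cite[Theorem 3.4.4]{Zimmer}, so there is no in-paper argument to compare against, and I am judging your proposal on its own merits against the standard argument. Your outline is the right one in spirit --- bound degrees by exhaustion, reconstruct ${}_a\phi$ from its values at finitely many nodes, then globalize from a positive-measure set using uniqueness of rational functions on positive-measure sets --- and your final globalization step is correctly carried out.

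The genuine gap is the one you flag at the end, and you leave it open. The trouble is that the ``general position'' your Lagrange-type formula needs for the nodes $b_1,\ldots,b_N$ is relative to the \emph{unknown} rational functions $S_a:={}_a\phi$: you must avoid the poles of $S_a$ and make the interpolation system for $S_a$ uniquely solvable, for almost every $a$ simultaneously, yet you have no control on the distribution of the measurable assignment $a\mapsto S_a$ inside the finite-dimensional family $V_{d_0}$ of bidegree $\leq(d_0,d_0)$ rational functions on $\R^m$. A tuple that is generic in the ordinary sense can still be degenerate for a positive-measure set of $a$. Closing this requires a Fubini argument on $\R^n\times(\R^m)^N$ resting on two facts you do not state: (i) $S_a(b)=R_b(a)$ for almost every pair $(a,b)$, because both sides agree with $\phi$ almost everywhere and the two agreement sets are of full measure in the product; (ii) for each \emph{fixed} rational $S$ of degree at most $d_0$, the set of tuples $(b_1,\ldots,b_N)$ for which interpolation fails to recover $S$ is a proper algebraic subvariety of $(\R^m)^N$ --- a dimension count using $N>\dim V_{d_0}$, which also forces you to choose $N$ strictly larger than the minimum needed. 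Only then does Fubini, applied in the other order, produce a single tuple of nodes that works for almost every $a$. Two smaller omissions: you treat the sets $A_d$ as measurable without justification (one must exhibit them, e.g., as projections of Borel sets built from the parametrized family $V_d$, or via an $L^1_{\mathrm{loc}}$-limit characterization), and the ``Lagrange formula'' for rational interpolation is only rational on an affine chart of the projective parameter space, which should be recorded even though it is ultimately harmless.
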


 This easily gives that the restriction of $\phi_x$ to any complex affine line $\Ll$ in $\C^{p-1}$ coincides almost everywhere with a rational map:
\begin{lem}
 For almost every affine complex line $\Ll\subset \C^{p-1}$, the restriction $\phi_x|_{\Ll}$ coincides almost everywhere with a rational map.
\end{lem}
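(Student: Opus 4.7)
The plan is to reduce the two-dimensional rationality question on $\Ll \cong \C$ to one-dimensional slices in two independent directions, using an inversion to convert the known rationality on Euclidean circles into rationality on Euclidean lines, and then to invoke Lemma \ref{lem:R^nxR^m} componentwise.

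Concretely, for a generic line $\Ll$, Corollary \ref{cor:phiL} allows me to pick a point $p_0 \in \Ll$ such that $\phi_x|_C$ is rational for almost every Euclidean circle $C \subset \Ll$ containing $p_0$. I would then precompose $\phi_x|_\Ll$ with the complex M\"obius inversion $I(z) = p_0 + 1/(z-p_0)$ of $\Ll$, obtaining $\psi := \phi_x|_\Ll \circ I^{-1}$. Since $I$ is a birational involution of $\Ll$ sending $p_0$ to $\infty$, it carries the Euclidean circles of $\Ll$ through $p_0$ bijectively onto the Euclidean lines of $\Ll$. This correspondence is algebraic between the respective two-dimensional parameter spaces (centers $c \in \C$ for the circles, direction plus signed distance for the lines), so the full-measure set of ``good'' circles through $p_0$ translates into a full-measure subset of lines on which $\psi$ is rational.

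Next, I would apply Fubini to the parameter space of Euclidean lines in $\Ll$, written as direction $\theta \in \R/\pi \Z$ times signed distance, to extract a positive measure set $\Theta$ of directions $\theta$ such that $\psi$ is rational on almost every line of direction $\theta$. Picking any two distinct directions $\theta_1, \theta_2 \in \Theta$ and performing an $\R$-linear change of coordinates on $\Ll$ to make them the coordinate axes puts us in the hypothesis of Lemma \ref{lem:R^nxR^m}; applied to each of the $2m(n-m)$ real components of $\psi \colon \R^2 \to W_{\phi(x)}$, this yields that $\psi$ agrees almost everywhere with a rational map. Finally $\phi_x|_\Ll = \psi \circ I$ is rational as a composition of rational maps, as required.

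The one subtle point is the Fubini-style translation between the Lebesgue measure on circles through $p_0$ and the Lebesgue measure on Euclidean lines: this works because the inversion-induced map between these two-dimensional parameter spaces is a rational bijection onto a Zariski open subset, hence is smooth with nonvanishing Jacobian on a full measure open set and so preserves the Lebesgue measure class and carries null sets to null sets. Once that absolute continuity is in hand, every other step is a routine Fubini argument combined with the hypothesis and Lemma \ref{lem:R^nxR^m}.
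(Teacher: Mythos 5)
Your proof is correct and takes essentially the same route as the paper: pick a point $p$ on the line via Corollary \ref{cor:phiL} so that $\phi_x$ is rational on almost every Euclidean circle through $p$, precompose with the M\"obius inversion at $p$ to turn those circles into affine real lines, and apply Lemma \ref{lem:R^nxR^m}. The only difference is that you spell out the Fubini step of extracting two independent directions (and the absolute continuity of the circle-to-line parameter change), which the paper leaves implicit when it invokes Lemma \ref{lem:R^nxR^m}.
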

\begin{proof}
 Let us fix a line $\Ll$ satisfying the hypothesis of Corollary \ref{cor:phiL} and denote by $\phi_\Ll:\C\to W_{\phi(x)}$ the restriction of $\phi_x$ to $\Ll$, composed with a linear identification of $\Ll$ with the complex plane $\C$. By the second assertion of Corollary \ref{cor:phiL}, we can find a point $p\in \C$ such that for almost every Euclidean circle $C$ through $p$ the restriction of $\phi_{\Ll}$ to $C$ coincides almost everywhere with a rational map. Let us now consider the birational map $i_p:\C\to\C$ defined by $i_p(z)=(z-p)^{-1}$, and let us denote by $\psi_\Ll$ the composition $\psi_\Ll=\phi_\Ll\circ i_p^{-1}$. Since the image under $i_p$ of Euclidean circles through the point $p$ are precisely the affine real lines of $\C$ that do not contain $0$, we get that the restriction of $\psi_\Ll$ to almost every affine line coincides almost everywhere with a rational map. In particular a consequence of Lemma \ref{lem:R^nxR^m} is that the map $\psi_\Ll$ itself coincides almost everywhere with a 
rational map. Since $\phi_
\Ll$ coincides almost everywhere with $\psi_\Ll\circ i_p$ we get that the same is true for the map $\phi_\Ll$ and this concludes the proof.    
\end{proof}

Applying Lemma \ref{lem:R^nxR^m} again we deduce the following proposition:
\begin{prop}
 Let $\phi:\deH^p\to\Ss_{m,n}$ be a map with the property that for almost every chain $C$ the restriction of $\phi$ to $C$ coincides almost everywhere with a rational map. Then for almost every point $x\in\deH^p$ the map $\phi_x$ coincides almost everywhere with a rational map.
 \end{prop}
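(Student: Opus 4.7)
The plan is to prove by induction on $k \geq 1$ the following statement: if $\psi \colon \C^k \to V$ is a measurable map into a finite-dimensional real vector space $V$ whose restriction to almost every complex affine line in $\C^k$ agrees almost everywhere with a rational map, then $\psi$ itself agrees almost everywhere with a rational map. The proposition will then follow by applying this with $\psi = \phi_x$ and $k = p-1$, invoking the preceding lemma as the hypothesis.

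The base case $k = 1$ is immediate, since $\C$ is itself an affine complex line. For the inductive step, I first consider the smooth incidence variety $\Ii = \{(L,H) \mid L \subset H\}$ where $L$ ranges over complex affine lines and $H$ over complex affine hyperplanes in $\C^k$. Both projections from $\Ii$ are smooth fiber bundles with positive-dimensional fibers, so Fubini applied to the projection onto the line factor, together with the full-measure set of good lines, yields a full-measure set of good pairs $(L,H)$; pushing forward along the projection onto the hyperplane factor then gives that for almost every hyperplane $H$, the restriction $\psi|_H$ has rational restriction to almost every complex affine line contained in $H$. Identifying $H \cong \C^{k-1}$ and invoking the inductive hypothesis, I conclude that $\psi|_H$ agrees almost everywhere with a rational map for almost every affine complex hyperplane $H$.

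Next, since the space of complex affine lines fibers over $\P^{k-1}(\C)$ via the direction map, and the space of affine hyperplanes fibers over $\P^{k-1}(\C)$ via the normal direction, another Fubini argument produces a generic direction $e \in \P^{k-1}(\C)$ such that both almost every complex affine line in direction $e$ and almost every affine hyperplane with normal $e$ have rational restriction. After an $\R$-linear change of coordinates I identify $\C^k \cong \C \times \C^{k-1}$ so that the first factor is spanned by $e$ and the second is its $\R$-orthogonal complement. Writing $\psi = \psi(b,a)$ in these coordinates, for almost every $a \in \C^{k-1}$ the slice $b \mapsto \psi(b,a)$ agrees almost everywhere with a rational map (its domain is a line in direction $e$), while for almost every $b \in \C$ the slice $a \mapsto \psi(b,a)$ agrees almost everywhere with a rational map (it is the restriction to a hyperplane with normal $e$). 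Applying Lemma \ref{lem:R^nxR^m} componentwise with $\R^n = \R^{2(k-1)}$ and $\R^m = \R^2$ concludes that $\psi$ agrees almost everywhere with a rational map on $\C^k$.

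The main point to handle with care is the Fubini/fibration bookkeeping on the incidence variety $\Ii$ and on the Grassmannians of affine subspaces: one must verify that the natural measure classes on lines, hyperplanes, and $\Ii$ all arise from smooth submersions of manifolds in a compatible way, so that a full-measure set of lines does push back and forward to a full-measure set of good hyperplanes through the fiber integrations. Once this compatibility is set up, the rest of the argument is a mechanical dimension-by-dimension application of Zimmer's Lemma \ref{lem:R^nxR^m}.
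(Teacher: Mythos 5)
Your proof is correct and fills in, via an explicit induction on the complex dimension of the source, the step the paper compresses into the single phrase ``Applying Lemma \ref{lem:R^nxR^m} again we deduce the following proposition''; the induction is genuinely needed here, since one application of Zimmer's lemma only moves one (complex) dimension at a time and so cannot pass directly from lines to $\C^{p-1}$ when $p-1>2$. The ingredients you use---the preceding lemma giving rationality on almost every complex affine line, the Fubini bookkeeping on the line--hyperplane incidence variety, and Lemma \ref{lem:R^nxR^m} applied componentwise to the splitting $\C^{k}=\C\times\C^{k-1}$---are exactly the ones the paper has in mind, so this is the intended argument, just carried out in full.
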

In turn this was the last missing ingredient to prove Theorem \ref{thm:phirational}

\section{Conclusion}\label{sec:rational}
The last step of Margulis' original proof of superrigidity involves showing  that if a Zariski dense representation $\rho:\G\to H$ of a lattice $\G$ in the algebraic group $G$ admits an algebraic boundary map, then it extends to a representation of $G$ (cfr. \cite{Margulis} and \cite[Lemma 5.1.3]{Zimmer}). The same argument applies here to deduce our main theorem: 

\begin{proof}[Proof of Theorem \ref{thm:Zariskisuperrigidity}]
Let $\rho:\G\to \PU(m,n)$ be a Zariski dense maximal representation and let $\psi:\deH^p\to\Ss_{m,n}$ be a measurable $\rho$-equivariant boundary map, that exists as a consequence of Proposition \ref{prop:boundary map} (the difference between $\SU(m,n)$ and $\PU(m,n)$ plays no role here, since the action of $\SU(m,n)$ on $\Ss_{m,n}$ factors through the projection to the adjoint form of the latter group). The essential image of $\psi$ is a Zariski dense subset of $\Ss_{m,n}$ as a consequence of Proposition \ref{prop:phiZariskidense}, moreover Corollary \ref{cor:incidence preserved} implies that $\psi$ preserves the chain geometry.

Since we proved that any measurable, Zariski dense, chain preserving boundary map $\psi$ coincides almost everywhere with a rational map (cfr. Theorem \ref{thm:phirational}), we get that there exists a $\rho$-equivariant rational map $\phi:\deH^p\to\Ss_{m,n}$. The $\rho$-equivariance follows from the fact that  $\phi$ coincides almost everywhere with $\psi$ that is $\rho$-equivariant. In particular, for every $\gamma$ in $\G$ the set on which $ \phi(\gamma x)=\rho(\gamma)\phi(x)$ is a Zariski closed, full measure set, and hence is the whole $\deH^p$.

Since $\phi$ is $\rho$-equivariant and rational it is actually regular: indeed the set of regular points for $\phi$ is a non-empty, Zariski open, $\G$-equivariant subset of $\deH^p$. Since, by Borel density \cite[Theorem 3.2.5]{Zimmer}, the lattice $\G$ is Zariski dense in $\SU(1,p)$ and $\deH^p$ is an homogeneous algebraic $\SU(1,p)$ space, the only $\G$-invariant proper Zariski closed subset of $\deH^p$ is the empty set, and this implies that the set of regular points of $\phi$ is the whole $\deH^p$.

In the sequel it will be useful to deal with complex algebraic groups and complex varieties in order to exploit algebraic results based on Nullstellensatz. This is easily achieved by considering the complexification. We will denote by $G$ the algebraic group $\SL(p+1,\C)$ and by $H$ the group $\PSL(m+n,\C)$ endowed with the appropriate real structures so that  $\SU(1,p)=G(\R)$ and $\PU(m,n)=H(\R)$.
Since $\deH^p$ and $\Ss_{m,n}$ are homogeneous spaces that are projective varieties, there exist parabolic subgroups $P<\SL(p+1,\C)$ and $Q<\PSL(m+n,\C)$ such that $\deH^p=(G/P)(\R)=G(\R)/(P\cap G(\R))$ and $\Ss_{m,n}=(H/Q)(\R)$. 

The algebraic $\rho$-equivariant map $\phi:\deH^p\to\Ss_{m,n}$ lifts to a map $\ov \phi:G(\R)\to \Ss_{m,n}$ and we can extend the latter map uniquely to an algebraic map $T:G\to H/Q$ using the fact that $G(\R)$ is Zariski dense in $G$. The extended map $T$ is $\rho$-equivariant since $G(\R)$ is Zariski dense in $G$: whenever an element $\gamma \in \G$ is fixed, the set $\{g\in G|\; T(\g g)=\rho(\g)T(g)\}$ is Zariski closed and contains $G(\R)$.

Let us now focus on the graph of the representation $\rho:\Gamma\to H$ as a subset $\Gr(\rho)$ of the group $ G\times H$. Since $\rho$ is an homomorphism, $\Gr(\rho)$ is a subgroup of $G\times H$, hence its Zariski closure $\ov{\Gr(\rho)}^Z$ is an algebraic subgroup. The image under the first projection $\pi_1$ of $\ov{\Gr(\rho)}^Z$ is a closed subgroup of $G$: indeed the image of a rational morphism (over an algebraically closed field) contains an open subset of its closure, since in our case $\pi_1$ is a group homomorphism, its image is an open  subgroup that is hence also closed. Moreover $\pi_1(\ov{\Gr(\rho)}^Z)$ contains $\G$ that is Zariski dense in $G$ by Borel density, hence equals $G$.

We now want to use the existence of the algebraic map $T$ and the fact that $\rho(\G)$ is Zariski dense in $H$ to show that $\ov{\Gr(\rho)}^Z$ is the graph of an homomorphism.
In fact it is enough to show that $\ov{\Gr(\rho)}^Z\cap (\{{\rm id}\}\times H)=({\rm id},{\rm id})$.
Let $({\rm id},f)$ be an element in $\ov{\Gr(\rho)}^Z\cap (\{{\rm id}\}\times H)$. Since $H$ is absolutely simple being an adjoint form of a simple Lie group, and $N=\bigcap_{h\in H}hQh^{-1}$ is a normal subgroup of $H$, it is enough to show that $f\in N$ or, equivalently, that $f$ fixes pointwise $H/Q$.  

But, since $T$ is a regular map, and the actions of $G$ on itself and of $H$ on $H/Q$ are algebraic, we get that the stabilizer of the map $T$ under the $G\times H$- action,
 $$\Stab_{G\times H}(T)=\{(g,h)|\;((g,h)\cdot T)(x)=h^{-1}T(gx)=T(x),\;\forall x\in G\},$$ is a Zariski closed subgroup of $G\times H$. Moreover $\Stab_{G\times H}(T)$  contains $\Gr(\rho)$ and hence also $\ov{\Gr(\rho)}^Z$. In particular $({\rm id},f)$ belongs to the stabilizer of $T$, hence the element $f$ of $H$ fixes the image of $T$ pointwise. Since the image of $T$ is $\rho(\G)$-invariant, $\rho(\G)$ is Zariski dense and the set of points in $H/Q$ that are fixed by $f$ is a closed subset, $f$ acts trivially on $H/Q$.
\end{proof}
We can now prove  Theorem \ref{thm:general}:

\begin{proof}[Proof of Theorem \ref{thm:general}]
Let $\rho:\G\to \SU(m,n)$ be a maximal representation and let $L$ be the Zariski closure of $\rho(\G)$ in $\SL(m+n,\C)$. Here, as above, $\SU(m,n)=H(\R)$ with respect to a suitable real structure on $H=\SL(m+n,\C)$. Since the representation $\rho$ is tight, we get, as a consequence of Theorem \ref{thm:tight}, that $L(\R)$ almost splits a product $L_{nc}\times L_{c}$ where $L_{nc}$ is a semisimple Hermitian Lie group tightly embedded in $\SU(m,n)$ and $K=L_c$ is a compact subgroup of $\SU(m,n)$. 

Let us consider $L_1,\ldots, L_k$ the simple factors of $L_{nc}$, namely $L_{nc}$, being semisimple, almost splits as the product $L_1\times\ldots \times L_k$ where $L_k$ are simple Hermitian Lie groups. The first observation is that none of the groups $L_i$ can be virtually isomorphic to $\SU(1,1)$. In that case the composition of the representation $\rho$ with the projection $L_{nc}\to L_i$ would be a maximal representation of a complex hyperbolic lattice with values in a group that is virtually isomorphic to $\SU(1,1)$ and this is ruled out by \cite{BICartan}: indeed Burger and Iozzi prove, as the last step in their proof of \cite[Theorem 2]{BICartan}, that there are no maximal representations of complex hyperbolic lattices in $\PU(1,1)$.

This implies that the inclusion $i:L_{nc}\to \SU(m,n)$ fulfills the hypotheses of Theorem \ref{thm:tightol}. In particular it  is enough to prove that each factor $L_i$ which is not of tube type is isomorphic to $\SU(1,p)$ and the composition of $\rho$ with the projection to $L_i$ is conjugate to the inclusion. 
Since, by Corollary \ref{cor:noZariskidense}, there is no Zariski dense representation of $\G$ in $\SU(m_i,n_i)$ if $1<m_i<n_i$, we get that $m_i=1$. Moreover, since the only Zariski dense tight representation of $\SU(1,p)$ in $\SU(1,q)$ is the identity map, we get that $n_i=p$ and the composition of $\rho$ with the projection to $L_i$ is conjugate to the inclusion. 
This concludes the proof.

\end{proof}

\begin{proof}[Proof of Corollary \ref{cor:general}]
 We know that the Zariski closure of the representation $\rho$ is contained in a subgroup of $\SU(m,n)$ isomorphic to $\SU(1,p)^t\times \SU(m-t,m-t)\times K$. The product $M=\SU(1,p)^t\times \SU(m-t,m-t)$ corresponds to a splitting $\C^{m,n}=V_1\oplus\ldots\oplus V_t\oplus W\oplus Z$ where the restriction of $h$ to $V_i$ is non-degenerate and has signature $(1,p)$ and the restriction of $h$ to $W$ is non-degenerate and has signature $(m-t,m-t)$. The subspace $W$ is left invariant by $M$ hence also by $K$ (since $K$ commutes with $M$ and all the invariant subspaces for $M$ have different signature).
 In particular the linear representation of $\G$ associated with $\rho$ leaves invariant a subspace on which $h$ has signature $(k,k)$ for some $k$ greater than 1 unless there are no factors 
of 
tube-type in the decomposition of $L$. This latter case corresponds to standard embeddings. 
\end{proof}

\begin{proof}[Proof of Corollary \ref{cor:local rigidity}]
 Let us denote by $\rho_0:\G\to \SU(m,n)$ the standard representation. Since by Lemma \ref{lem:Toledo constant} the generalized Toledo invariant is constant on components of the representation variety, we get that any other representation $\rho$ in the component of $\rho_0$ is maximal. By Theorem \ref{thm:general} this implies that $\ov{\rho(\G)}^Z$ almost splits as a product $K\times L_t\times \SU(1,p)$, and is contained in a subgroup of $\SU(m,n)$ of the form $\SU(1,p)^t\times \SU(m-t,m-t)\times K$. If the group $L_t$ is trivial then $\rho$ is a standard embedding, and is hence conjugate to $\rho_0$ up to a character in the compact centralizer of the image of $\rho_0$. In particular this would imply that $\rho_0$ is locally rigid.
 
 Let us then assume by contradiction that there are representations $\rho_i$  arbitrarily close to $\rho_0$ and with the property that the tube-type factor of the Zariski closure of $\rho_i(\G)$ is non trivial. Up to modifying the representations $\rho_i$ we can assume that the compact factor $K$ in the Zariski closure of $\rho_i$ is trivial.
 
 By Theorem \ref{thm:general} this implies that $\rho_i(\G)$ is contained in a subgroup of $\SU(m,n)$ isomorphic to $\SU(m,pm-1)$, moreover we can assume, up to conjugate the representations $\rho_i$ in $\SU(m,n)$, that the Zariski closure of $\rho_i$ is contained in the same subgroup $\SU(m,pm-1)$ for every $i$. Since the representations whose image is contained in the subgroup $\SU(m,pm-1)$ is a closed subspace of $\Hom(\G,G)/G$, we get that the image of $\rho_0$ is contained in $\SU(m,pm-1)$ and this is a contradiction, since the image of the diagonal embedding doesn't leave invariant any subspace on which the restriction of $h$ has signature $(m,pm-1)$.
\end{proof}

\end{document}